%
% The first command in your LaTeX source must be the \documentclass command.
\documentclass[manuscript]{acmart}
\usepackage{dsfont}

\newcommand{\ee}{{\mathrm e}}
\newcommand{\sujsqd}{\text{SUJSQ}^{\textup{det}}(\d)}
\newcommand{\sujsqe}{\text{SUJSQ}^{\textup{exp}}(\d)}
\newcommand{\aujsqd}{\text{AUJSQ}^{\textup{det}}(\d)}
\newcommand{\aujsqe}{\text{AUJSQ}^{\textup{exp}}(\d)}
\newcommand{\sujsqdi}{\text{SUJSQ}^{\textup{det,idle}}(\d)}
\newcommand{\sujsq}{\text{SUJSQ}(\d)}
\newcommand{\aujsq}{\text{AUJSQ}(\d)}
\renewcommand{\d}{\delta}

\newcommand{\e}{\text{e}}

\newcommand{\indi}[1]{{\mathds 1}{\left\{{\displaystyle#1}\right\}}}
\newcommand{\indis}[1]{{\mathds 1}{\{{\displaystyle#1}\}}}

\newtheorem{algorithm}{Algorithm}

\graphicspath{{Figures/},{./}}

%
% defining the \BibTeX command - from Oren Patashnik's original BibTeX documentation.
\def\BibTeX{{\rm B\kern-.05em{\sc i\kern-.025em b}\kern-.08emT\kern-.1667em\lower.7ex\hbox{E}\kern-.125emX}}
    
% Rights management information. 
% This information is sent to you when you complete the rights form.
% These commands have SAMPLE values in them; it is your responsibility as an author to replace
% the commands and values with those provided to you when you complete the rights form.
%
% These commands are for a PROCEEDINGS abstract or paper.
\setcopyright{acmlicensed}
\acmJournal{POMACS}
\acmYear{2019} \acmVolume{3} \acmNumber{1} \acmArticle{4} \acmMonth{3} \acmPrice{15.00}\acmDOI{10.1145/3311075}

%
% These commands are for a JOURNAL article.
%\setcopyright{acmcopyright}
%\acmJournal{TOG}
%\acmYear{2018}\acmVolume{37}\acmNumber{4}\acmArticle{111}\acmMonth{8}
%\acmDOI{10.1145/1122445.1122456}

%
% Submission ID. 
% Use this when submitting an article to a sponsored event. You'll receive a unique submission ID from the organizers
% of the event, and this ID should be used as the parameter to this command.
%\acmSubmissionID{123-A56-BU3}

%
% The majority of ACM publications use numbered citations and references. If you are preparing content for an event
% sponsored by ACM SIGGRAPH, you must use the "author year" style of citations and references. Uncommenting
% the next command will enable that style.
%\citestyle{acmauthoryear}

%
% end of the preamble, start of the body of the document source.
\begin{document}

%
% The "title" command has an optional parameter, allowing the author to define a "short title" to be used in page headers.
\title{Hyper-Scalable JSQ with Sparse Feedback}

%
% The "author" command and its associated commands are used to define the authors and their affiliations.
% Of note is the shared affiliation of the first two authors, and the "authornote" and "authornotemark" commands
% used to denote shared contribution to the research.
\author{Mark van der Boor}
\affiliation{%
  \institution{Eindhoven University of Technology}
  \streetaddress{P.O. Box 513}
  \city{Eindhoven}
  %\state{Ohio}
  \postcode{5600 MB}
  \country{The Netherlands}
}
%\authornote{Both authors contributed equally to this research.}
%\email{trovato@corporation.com}
%\orcid{1234-5678-9012}
\author{Sem Borst}
\affiliation{%
  \institution{Eindhoven University of Technology}
  %\streetaddress{P.O. Box ???}
  %\city{Eindhoven}
  %\state{Ohio}
 % \postcode{???}
 \country{The Netherlands}
}
\affiliation{
  \institution{Nokia Bell Labs}
  \streetaddress{P.O. Box 636}
  \city{Murray Hill}
  \state{NJ}
  \postcode{07974}
  \country{USA}
}

%\authornotemark[1]
%\email{webmaster@marysville-ohio.com}
\author{Johan van Leeuwaarden}
\affiliation{%
  \institution{Eindhoven University of Technology}
  %\streetaddress{P.O. Box ???}
 % \city{Eindhoven}
  %\state{Ohio}
 % \postcode{???}
 \country{The Netherlands}
}

%
% By default, the full list of authors will be used in the page headers. Often, this list is too long, and will overlap
% other information printed in the page headers. This command allows the author to define a more concise list
% of authors' names for this purpose.
\renewcommand{\shortauthors}{Van der Boor, Borst, Van Leeuwaarden}

%
% The abstract is a short summary of the work to be presented in the article.
\begin{abstract}
		Load balancing algorithms play a vital role in enhancing performance in data centers and cloud networks.
		Due to the massive size of these systems, scalability challenges,
		and especially the communication overhead associated with load
		balancing mechanisms, have emerged as major concerns.
		Motivated by these issues, we introduce and analyze a novel class
		of load balancing schemes where the various servers provide occasional
		queue updates to guide the load assignment.
		
		We show that the proposed schemes strongly outperform JSQ($d$)
		strategies with comparable communication overhead per job, and can achieve
		a vanishing waiting time in the many-server limit with just one
		message per job, just like the popular JIQ scheme. 
		The proposed schemes are particularly geared however towards the
		sparse feedback regime with less than one message per job,
		where they outperform corresponding sparsified JIQ versions. 
		
		We investigate fluid limits for synchronous updates as well as
		asynchronous exponential update intervals.
		The fixed point of the fluid limit is identified in the latter case,
		and used to derive the queue length distribution.
		We also demonstrate that in the ultra-low feedback
		regime the mean stationary waiting time tends to a constant in the
		synchronous case, but grows without bound in the asynchronous case.
\end{abstract}

%
% The code below is generated by the tool at http://dl.acm.org/ccs.cfm.
% Please copy and paste the code instead of the example below.
%
\begin{CCSXML}
<ccs2012>
<concept>
<concept_id>10002950.10003648.10003688.10003689</concept_id>
<concept_desc>Mathematics of computing~Queueing theory</concept_desc>
<concept_significance>500</concept_significance>
</concept>
<concept>
<concept_id>10002950.10003648.10003700</concept_id>
<concept_desc>Mathematics of computing~Stochastic processes</concept_desc>
<concept_significance>300</concept_significance>
</concept>
</ccs2012>
\end{CCSXML}

\ccsdesc[500]{Mathematics of computing~Queueing theory}
\ccsdesc[300]{Mathematics of computing~Stochastic processes}

%
% Keywords. The author(s) should pick words that accurately describe the work being
% presented. Separate the keywords with commas.
\keywords{load balancing, scaling limits, data centers, cloud networks, parallel-server systems, join-the-shortest-queue, delay performance}

% Paper history, include updated dates.
%\received{November 2018} 
%\received[revised]{December 2018}
%\received[accepted]{January 2019}

%
% A "teaser" image appears between the author and affiliation information and the body 
% of the document, and typically spans the page. 
%%\begin{teaserfigure}
%%  \includegraphics[width=\textwidth]{sampleteaser}
%%  \caption{Seattle Mariners at Spring Training, 2010.}
%%  \Description{Enjoying the baseball game from the third-base seats. Ichiro Suzuki preparing to bat.}
%%  \label{fig:teaser}
%%\end{teaserfigure}

%
% This command processes the author and affiliation and title information and builds
% the first part of the formatted document.
\maketitle

\section{Introduction}

{\em Background and motivation.}
We introduce and analyze hyper-scalable load
balancing algorithms that only involve minimal communication
overhead and yet deliver excellent performance.
Load balancing algorithms play a key role in efficiently distributing
jobs (e.g.~compute tasks, database look-ups, file transfers) among
servers in cloud networks and data centers \cite{duet14,MSY12,ananta13}.
Well-designed load balancing schemes provide an effective mechanism
for improving performance metrics in terms of response times
while achieving high resource utilization levels.
Besides these typical performance criteria, communication overhead
and implementation complexity have emerged as equally crucial
attributes, due to the immense size of cloud networks and data centers.
These scalability challenges have fueled a strong interest in the
design of load balancing algorithms that provide robust performance
while only requiring low overhead.

We focus on a basic scenario of $N$~parallel
identical servers, exponentially distributed service requirements,
and a service discipline at each server that is oblivious to the
actual service requirements (e.g.~FCFS).
In this canonical case, the Join-the-Shortest-Queue (JSQ) policy
has strong stochastic optimality properties, and in particular
minimizes the overall mean delay among the class of non-anticipating
load balancing policies that do not have any advance knowledge
of the service requirements \cite{EVW80,Winston77}.

In order to implement the JSQ policy, a dispatcher requires
instantaneous knowledge of the queue lengths at all the servers,
which may give rise to a substantial communication burden,
and not be scalable in scenarios with large numbers of servers.
The latter issue has motivated consideration of so-called JSQ($d$)
strategies, where the dispatcher assigns incoming jobs to a server
with the shortest queue among $d$~servers selected uniformly at random.
This involves an exchange of $2 d$ messages per job (assuming
$d \geq 2$), and thus greatly reduces the communication overhead
compared to the JSQ policy when the number of servers~$N$ is large.
At the same time, results in Mitzenmacher~\cite{Mitzenmacher01}
and Vvedenskaya {\em et al.}~\cite{VDK96} indicate that even
a value as small as $d = 2$ yields significant performance
improvements as $N \to \infty$ compared to a purely random
assignment scheme ($d = 1$).
This is commonly referred to as the ``power-of-two'' effect.

Although JSQ($d$) strategies provide notably better waiting-time
performance than purely random assignment, they lack the ability
of the conventional JSQ policy to drive the waiting time to zero
in the many-server limit.
Moreover, while JSQ($d$) strategies notably reduce the amount
of communication overhead compared to the full JSQ policy,
the two-way delay incurred in obtaining queue length information
still directly adds to the waiting time of each job.
The latter achilles heel of `push-based' strategies is eliminated
in `pull-based' strategies where servers pro-actively provide queue
length information to the dispatcher.
A particularly popular pull-based strategy is the so-called
Join-the-Idle-Queue (JIQ) scheme \cite{BB08,LXKGLG11}. Servers advertise their availability to the dispatcher whenever they become idle, which involves no more than one message per job to send a job to an available idle server.
This pull-based strategy has the ability of the full JSQ policy to achieve a zero
waiting time in the many-server limit~\cite{Stolyar15}. A pull-based implementation for the JSQ policy exists but it leads to more frequent communication requirements or larger communication messages.

The superiority of the JIQ scheme over JSQ($d$) strategies in terms
of performance and communication overhead is owed to the state
information stored at the dispatcher.
Results in~\cite{GTZ16} imply that a vanishing waiting time can
only be achieved with finite communication overhead per job
when allowing memory usage at the dispatcher, and further suggest
that one message per job is a minimal requirement in a certain sense.
However, even just one message per job may still be prohibitive,
especially when jobs do not involve big computational tasks,
but small data packets which require little processing,
e.g.~in IoT cloud environments.
In such situations the sheer message exchange in providing queue
length information may be disproportionate to the actual amount
of processing required. 

%In the JIQ scheme, idle servers send tokens to the dispatcher
%(or one among several dispatchers) to advertize their availability.
%When a job arrives and the dispatcher has tokens available,
%it assigns the job to one of the corresponding servers
%(and disposes of the token).
%When no tokens are available at the time of a job arrival, the job
%may either be discarded or forwarded to a randomly selected server.
%Note that a server only issues a token when a job completion
%leaves its queue empty.
%Thus at most one message is generated per job (or possibly two
%messages, in case a token is revoked when an idle server receives
%a job through random selection from a dispatcher without any tokens).

%Under Markovian assumptions, the JIQ scheme achieves a zero
%probability of wait for any fixed subcritical load per server
%in a regime where the total number of servers grows large.
%Thus the JIQ scheme provides asymptotically optimal performance with
%minimal communication overhead (at most one or two messages per job),
%and outperforms power-of-$d$ policies as we will further discuss below.

%The latter asymptotic optimality of the JIQ scheme prevails
%in a multiple-dispatcher scenario provided the job arrival rates
%at the various dispatchers are exactly equal.
%When the various dispatchers receive jobs from external sources
%it is difficult however to perfectly balance the job arrival rates,
%and hence it is not uncommon for skewed load patterns to arise.

{\em Hyper-scalable algorithms.} 
%\showreviewcomments{Overall, paper is well written and the details are nicely explained. Readers (at least self) would have benefited a lot by having better summary upfront of the results and more importantly comparisons with prior work. Specifically, it would be useful to bring out key contributions / results clearly upfront rather than let the reader "discover" them by reading the remainder of the paper.}
%\showsuggestion{I would say there is a summary upfront and also comparisons with prior work... Perhaps it would help if we make 'key contributions' a subsection?}
Motivated by the above issues, we propose and examine a novel class
of load balancing schemes which also leverage memory at the dispatcher,
but allow the communication overhead to be seamlessly adapted
and reduced below that of the JIQ scheme. The basic scheme is as follows:

\begin{algorithm}[Basic hyper-scalable scheme]\label{hsalg}
The dispatcher forwards incoming jobs to the server with the lowest
queue estimate.  The dispatcher maintains an estimate for every server
and increments these estimates for every job that is assigned.
 Status updates of servers occur at rate $\delta$ per server,
and update the estimate that the dispatcher has at its disposal to the
actual queue length.
\end{algorithm}

Several aspects of Algorithm \ref{hsalg} are flexible (regarding the implementation and the status updates) and four different schemes that obey the rules of Algorithm \ref{hsalg} will be introduced. There are many more schemes that could be of interest, for example a scheme where the queue estimate is not an upper-bound but mimics the expected value of the queue length. While natural, these schemes are beyond the scope of the current paper.

%OCT29
When the update frequency per server is $\delta$ and $\lambda<1$ denotes the arrival rate per server, the number of messages per job is $\delta / \lambda$, which can be easily tuned by varying the value of $\delta$. Since all queue lengths are updated (on average) once every $1/\d$ time units, this gives $\d N$ queue-updates per time unit. Note that this algorithm can be modeled as a strictly push-based scheme (where the dispatcher requests the queue lengths of the servers), as well as a strictly pull-based scheme (where each server sends its queue length to the dispatcher, using an internal clock).

The JSQ($d$) scheme, when implemented in a push-based manner, requires $2d$ message exchanges per job, which amounts to $2 \lambda d N$ messages per time unit and is not scalable. However, when servers actively update their queue lengths to the dispatcher in the JSQ scheme or their idleness in the pull-based JIQ scheme, one needs less communication. In this case, any departing job needs to trigger the server to send an update to the dispatcher. This pull-based implementation requires one message of communication per job or $\lambda N$ per time unit. When queue lengths are large, not even all departing jobs need to trigger the server to send an update for the JIQ policy, which reduces the communication per job slightly. We thus conclude that the tunable communication overhead of $\d/\lambda$ per job (doubled when implemented in a push-based manner) of Algorithm \ref{hsalg} is comparable with pull-based JSQ, JIQ and JSQ($d$). Moreover,  Algorithm \ref{hsalg}  becomes more scalable for small values of $\d$, especially in the $\d<\lambda$-regime. Here an important reference point is that the pull-based JIQ scheme has at most one update per job. By hyper-scalable schemes we mean schemes that can be implemented with  $\d\leq 1$, and preferably with $\d \ll 1$. %OCT29

We introduce four hyper-scalable schemes that obey the rules of Algorithm \ref{hsalg} but differ in when the status-updates are sent. When the updates sent by the servers to the dispatcher are synchronized, we denote the scheme by $\sujsq$, Synchronized-Updates Join-the-Shortest-Queue. Similarly we introduce $\aujsq$ (Asynchronized-Updates Join-the-Shortest-Queue), which is used when the updates are asynchronous. We then add an exp-tag whenever the time between two updates is exponentially distributed (with parameter $\d$ and mean $1/\d$) and a det-tag when the time between the updates is constant ($1/\d$). This gives rise to four schemes; $\sujsqd$ , $\sujsqe$, $\aujsqd$ and $\aujsqe$. %In the simulation experiments we will also briefly consider $\sujsqdi$, which is similar to $\sujsqd$, except that only idle servers send notifications.

We show that the four schemes can achieve a vanishing waiting time in the many-server limit with just one message per job, just like JIQ. The proposed schemes are particularly geared however towards the sparse feedback regime with less than message per job, %OCT29
where they outperform corresponding sparsified JIQ versions. With fluid limits we demonstrate that in the ultra-low feedback regime the mean stationary waiting time tends to a constant in the synchronous case, but grows without bound in the asynchronous case. A more detailed overview of our key finding is presented in the next section. 

%\hl{What are the advantages and disadvantages of lower frequency of updates? }

%Although comparisons with JIQ seem justified since the JIQ-model and the current model are similar, some important remarks need to be made. Strictly speaking, in our model, we do not allow for pull-based mechanisms, as the initiative of updating should come from the dispatcher. In that case, JIQ would have to be implemented in an overhead-heavy manner, where all queues are probed for every arriving job to check for idle servers. In this case where JIQ is not viable, the presented algorithms are introduced as alternatives.\\

{\em Discussion of additional related work.}
As mentioned above, Mitzenmacher~\cite{Mitzenmacher01}
and Vvedenskaya {\em et al.}~\cite{VDK96} established mean-field
limit results for JSQ($d$) strategies.
These results indicate that for any subcritical arrival rate
$\lambda < 1$, the tail of the queue length distribution at each
individual server exhibits super-exponential decay,
and thus falls off far more rapidly than the geometric decay
in case of purely random assignment.
Similar power-of-$d$ effects have been demonstrated for
heterogeneous servers, non-exponential service requirements
and loss systems \cite{BLP10,BLP12,MKM15,MKMG15,MM14a,XDLS15}.
For no single value of~$d$, however, a JSQ($d$) strategy can rival
the JIQ scheme, which simultaneously achieves low communication
overhead and asymptotically optimal performance by leveraging
memory at the dispatcher~\cite{GTZ16,Stolyar15}.
The only exception arises for batches of jobs when the value of~$d$
and the batch size grow suitably large, as can be deduced from results
in~\cite{YSK15}, but we do not leverage batches in the current paper.
As we will show, the hyper-scalable schemes proposed in the present
paper are like the JIQ scheme superior to JSQ$(d$) strategies,
and also beat corresponding sparsified JIQ versions in the regime
$\d < 1$, which is particularly relevant from a scalability standpoint. 

Many popular schemes have also been analyzed in the Halfin-Whitt heavy-traffic regime \cite{MBLW16} and in the non-degenerate slowdown regime \cite{GW17}, in which the JIQ scheme is not necessarily optimal.

The use of memory in load balancing has been studied
in \cite{AGL10,Mitzenmacher02} but mostly in a `balls-and-bins' context
as opposed to the queueing scenario that we consider.
The work in~\cite{Mitzenmacher00} considers a similar setup as ours,
and examines how much load balancing degrades when old information is used.

In contrast, our focus is on improving the performance by using
non-recent information, similarly to \cite{AD18}.
A general framework for deriving fluid limits in the presence of memory
is described in~\cite{LN13}, but assumes that the length of only one queue
is kept in memory, while we allow for all queue lengths to be tracked. %\showimportantreviewcomments{The authors do not compare with batch Power-of-d results (of Srikant et al.) where one waits for a small batch, say logN, of jobs to accumulate and then probes a slightly larger set of random servers and then greedily dispatches jobs to these. This appears to have sub-linear sample scaling and near-optimal performance.} \showsuggestion{We will add this, although it does not fit our model. We should note that the batch-arrivals improves the algorithms, but that there are many applications where this is not feasible.}

{\em Organization of the paper.}
In Section~\ref{sec:results} we discuss our key findings and contributions for the hyper-scalable schemes, obtained through fluid-limit analysis and extensive simulations. In Section~\ref{notaprel} we introduce some useful notation and preliminaries, before we turn to a comprehensive analysis of the synchronous and asynchronous cases through the lens of fluid limits in Sections~\ref{sec:sync} and~\ref{sec:async}, respectively.  In Section~\ref{sec:conc} we conclude with some summarizing remarks and topics for further research.

%Naturally, other timings of the update-moments are also possible. One can think of an algorithm where server $i$ is updated at times $(i/N+n)/d$ for $n=0,1,\hdots$. In that case, all updates are evenly spread out to an interval of $1/d$, and every queue length is updated with frequency $d$.

\section{Key findings and contributions}
\label{sec:results}

The precise model we consider consists of $N$ parallel identical servers and one dispatcher. Jobs arrive at the dispatcher as a Poisson process of rate $\lambda N$, where $\lambda$ denotes the job arrival rate per server. Every job is dispatched to one of the servers, after which it joins the queue of the server if the server is busy, or will start its service when the server is idle. The job processing requirements are independent and exponentially distributed with unit mean at each of the servers. We consider several load balancing algorithms for the dispatching of jobs to servers, including the hyper-scalable schemes $\sujsqd$, $\sujsqe$, $\aujsqd$ and $\aujsqe$ described in the introduction. In the simulation experiments we will also briefly consider $\sujsqdi$, which is similar to $\sujsqd$, except that only idle servers send notifications. We now present the results from simulation studies and the fluid-limit and fixed-point analysis in Sections~\ref{sec:sync} and~\ref{sec:async}.

\subsection{Large-system performance}

In order to explore the performance of the hyper-scalable algorithms in the many-server limit $N\to\infty$, we investigate fluid limits. We analyze their behavior and fixed points, and use these to derive results for the system in stationarity as function of the update frequency~$\d$.

%\hl{Hier stond eerst iets over fixed points en stationary queue length distribution, is de voorgaande zin goed?}

%We will show that the proposed schemes markedly outperform JSQ($d$)
%strategies with the same number of $d \geq 1$ messages per job.

{\em Asymptotically optimal feedback regime.}
Using fluid-limit analysis, we prove that the proposed schemes can
achieve a vanishing waiting time in the many-server limit when the
update frequency $\delta$ exceeds $\lambda / (1 - \lambda)$.
In case servers only report zero queue lengths and suppress 
updates for non-zero queues, the update frequency required for
a vanishing waiting time can in fact be lowered to just~$\lambda$,
matching the one message per job involved in the JIQ scheme. 

%In Sections~\ref{sec:sync} and~\ref{sec:async} we will examine fluid
%limits in order to further explore the performance in the many-server
%limit and support the above statements. 
%\showreviewcomments{i don't think that the comparison with power of d is fair, as this algorithm does not allow for less than 1 message per job.}\showsuggestion{The comparison is fair: we don't need the number of messages to be less than 1. Here we only say that with the same $d$ our strategy is better, and we also do better when $d\to \infty$.}

\begin{figure*}[t]

\begin{center}
\includegraphics[width=\linewidth]{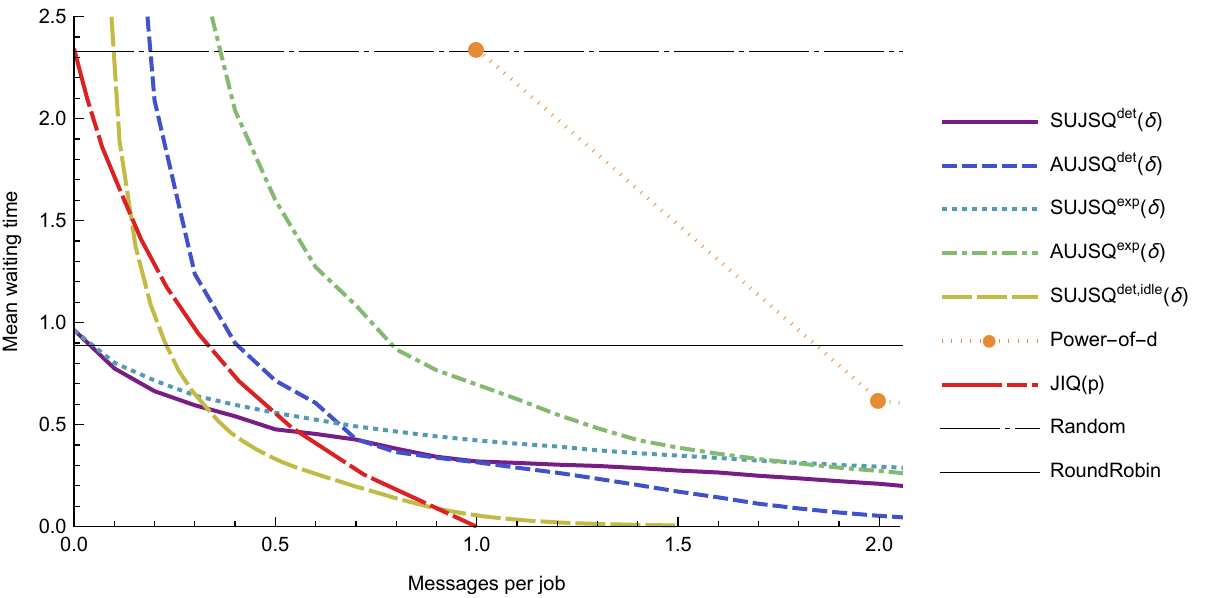}
\end{center}

\caption{Mean waiting times for different schemes (with various parameters) for $\lambda=0.7$ and $N=200$ obtained via a discrete-event simulation. For each of the schemes, the parameter~$\delta$, $d$ or~$p$ is varied and the communication in terms of messages per job and the waiting times of jobs are tracked in the simulation.}
\label{fig:ds}
\end{figure*}

\label{benchmarks}

{\em Sparse feedback regime.}
Figure~\ref{fig:ds} displays results from extensive simulations and shows the mean
waiting time as function of the number of messages per job. This number is proportional to the update frequency~$\d$,
and equals $\d/\lambda$ for the  four hyper-scalable schemes. We also show results for  JIQ$(p)$, a sparsified version of JIQ, where a token is sent
to the dispatcher with probability~$p$ whenever a server becomes idle.
Random refers to the scheme where every job is assigned to
a server selected uniformly at random, and Round-Robin assigns the
$i$-th arriving job to server $1 + i \mod N$.

For the sparse feedback regime when $\d < 0.5$ we see that the schemes $\sujsqd$ and $\sujsqe$ outperform JIQ$(p)$. 
Also observe that $\sujsqdi$, the scheme in which only idle servers send reports, achieves a near-zero waiting time with just one message per job, just like the JIQ scheme, and outperforms JIQ($p$) across most of the relevant domain $\d < 0.5$.
However, as $\delta \downarrow 0$ the waiting time grows without bound, since estimates will grow large due to lack of updates, which causes servers that are reported idle in the latest update to receive many jobs in succession.

{\em Ultra-low feedback regime.}
We examine the performance in the ultra-low feedback regime
where the update frequency~$\delta$ goes to zero,
and in particular establish a somewhat counter-intuitive dichotomy.
When all status-updates occur synchronously, the behavior of each
of the individual queues approaches that of a single-server queue with
a near-deterministic arrival process and exponential service times,
with the mean stationary waiting time tending to a finite constant.
In contrast, for asynchronous updates, the individual queues
experience saw-tooth behavior with oscillations and waiting times
that grow without bound.

\subsection{Synchronize or not?}
\label{subsec:ressync}

In case of synchronized updates, the dispatcher will update the queue lengths of the servers simultaneously. Thus, just after an update moment, the dispatcher has a perfect view of the status of all servers and it will dispatch jobs optimally. After a while, the estimates will start to deviate from the actual queue lengths, so that the scheme no longer makes (close to) optimal decisions. With asynchronous updates servers send updates at independent times, which means that some of the estimates may be very accurate, while others may differ significantly from the actual queue lengths.

{\em Round-Robin resemblance.}
We find that both $\sujsqd$ and $\sujsqe$ resemble Round-Robin as the
update frequency~$\delta$ approaches zero, and are the clear
winners in the ultra-low feedback regime, which is crucial
from a scalability perspective (see Figure~\ref{fig:ds}). To understand the resemblance with Round-Robin, notice that the
initial queue lengths after an update will be small compared
to the number of arrivals until the next update.
Thus soon after the update the dispatcher will essentially start
forwarding jobs in a (probabilistic) Round-Robin manner.
Specifically, most servers will have equal queue estimates at certain points in time, and they will each receive one job every $\lambda$ time units, but in a random order. This pattern repeats itself and resembles Round-Robin, where the difference of received jobs among servers can be at most one.

{\em Dichotomy.}
In  Figure~\ref{fig:ds} we also see that while $\aujsqd$ outperforms the synchronous variants for
large values of the update frequency~$\delta$, it produces a mean
waiting time that grows without bound as $\delta$ approaches zero.
The latter issue also occurs for $\aujsqe$ and render the
asynchronous versions far inferior in the ultra-low feedback regime
compared to both synchronous variants. To understand this remarkable dichotomy, notice that
 queue estimates must inevitably grow to increasingly large values
of the order $\lambda / \delta$ and significantly diverge from the
true queue lengths as the update frequency becomes small,
both in the synchronous and asynchronous versions.
However, in the synchronous variants the queue estimates will all
be lowered and updated to the true queue lengths simultaneously,
prompting the dispatcher to evenly distribute incoming jobs over time.
In contrast, in the asynchronous versions, a server will be the
only one with a low queue estimate right after an update,
and almost immediately be assigned a huge pile of jobs to bring
its queue estimate at par, resulting in oscillatory effects.
This somewhat counter-intuitive dichotomy reveals that the
synchronous variants behave benignly in the presence of outdated
information, while the asynchronous versions are adversely impacted.

%\subsection{Optimality}
%In Sections \ref{sec:sync} and \ref{sec:async} we consider the system as the number of servers $N$ grows large and we obtain a fluid model, find the fixed point and we conclude the following based on this fixed point. These properties hold for three variants of the hyper-scalable algorithm, but not for the variant with synchronous updates with exponentially distributed times between updates.
%\begin{enumerate}[i)]\setcounter{enumi}{4}
%\item When the update-rate $\delta$ is greater than some threshold, queueing vanishes in many-server limit. This threshold equals $\lambda/(1-\lambda)$ for two of the variants, and is even slightly lower for the third variant.
%\item For any $\delta>0$, the maximum queue length at fluid level in stationarity is bounded.
%\end{enumerate}

\section{Notation and preliminaries}
\label{notaprel}
In this section we introduce some useful notation and preliminaries
in preparation for the fluid-limit analysis in Sections~\ref{sec:sync}
and~\ref{sec:async}.
Recall that all the servers are identical and the dispatcher only
distinguishes among servers based on their queue estimates and does
not take their identities into account when forwarding jobs.
Hence we do not need to keep track of the state of each individual server,
but only count the number of servers that reside in a particular state.
Specifically, we will denote by $Y_{i,j}(t)$ the number of servers
with queue length~$i$ (including a possible job being served)
and queue estimate $j \geq i$ at the dispatcher at time~$t$.
Further denote by $V_i = \sum_{l = i}^{\infty} Y_{il}$
and $W_j = \sum_{k = 0}^{j} Y_{kj}$ the total number of servers
with queue length~$i$ and queue estimate~$j$, respectively,
when the system is in state~$Y$.

In order to analyze fluid limits in a regime where the number
of servers~$N$ grows large, we will consider a sequence of systems indexed
by~$N$, and attach a superscript~$N$ to the associated state variables.
We specifically introduce the fluid-scaled state variables
$y_{i,j}^N(t) = Y_{i,j}^N(t) / N$, representing the fraction of servers
in the $N$-th system with true queue length~$i$ and queue estimate
$j \geq i$ at the dispatcher at time~$t$, and assume that the sequence
of initial states is such that $y^N(0) \to y^\infty$.
Any (weak) limit $\{y(t)\}_{t \geq 0}$ of the sequence
$(\{y^N(t)\}_{t \geq 0})_{N \geq 1}$ as $N \to \infty$ will
be called a fluid limit.
Fluid limits do not only yield tractability, but also provide
a relevant tool to investigate communication overhead
and scalability issues which are inherently tied to scenarios
with massive numbers of servers.

Let $m(Y) = \min\{j: W_j > 0\}$ be the minimum queue estimate
among all servers when the system is in state~$Y$.
When a job arrives and the system is in state~$Y$, it is dispatched to one
of the $W_{m(Y)}$ servers with queue estimate $m(Y)$ selected
uniformly at random, so it joins a server with queue length $i \leq m(Y)$
with probability $Y_{i,m(Y)} / W_{m(Y)}$.
Because of the Poisson arrival process, transitions from a state~$Y$
to a state~$Y'$ with $Y_{i,j}' = Y_{i,j} - 1$
and $Y_{i+1,j+1}' = Y_{i+1,j+1} + 1$ thus occur at rate
$\lambda N p_{i,j}(Y)$,
with $p_{i,j}(Y) = \indi{j = m(Y)} Y_{i,j} / W_j$, $i = 0, \dots, j$.
Due to the unit-exponential processing requirements, transitions
from a state~$Y$ to a state~$Y'$ with $Y_{i,j}' = Y_{i,j} - 1$
and $Y_{i-1,j}' = Y_{i-1,j} + 1$ occur at rate $Y_{i,j}$, $i = 1, \dots, j$. For notational compactness, we further omit the dependence of $Y$ for $m(Y)$, $p_{i,j}(Y)$ and $q_{i,j}(Y)$ and instead write $m(t)$, $p_{i,j}(t)$ and $q_{i,j}(t)$ as they only depend on $t$ though $y(t)$.

In order to specify the transitions due to the updates, we need to
distinguish between the synchronous and the asynchronous case.

\subsubsection*{Synchronous updates}

Whenever a synchronous update occurs and the system is in state~$Y$,
a transition occurs to state~$Y'$ with $Y_{ii}' = V_i$ and $Y_{ij}' = 0$
for $i = 0, \dots, j - 1$.
Note that these transitions only occur in a Markovian fashion when the
update intervals are exponentially distributed.
When the update intervals are non-exponentially distributed,
$\{Y(t)\}_{t \geq 0}$ is not a Markov process, but the evolution
between successive updates is still Markovian.

\subsubsection*{Asynchronous updates}

When the system is in state~$Y$ and a server with queue length~$i$
and queue estimate $j > i$ sends an update to the dispatcher,
a transition occurs to a state~$Y'$ with $Y'_{i,j} = Y_{i,j} - 1$
and $Y_{i,i}' = Y_{i,i} + 1$.
Note that these transitions only occur in a Markovian fashion when the
update intervals are exponentially distributed.
When the update intervals are non-exponentially distributed,
$\{Y(t)\}_{t \geq 0}$ is not a Markov process, and in order to obtain
a Markovian state description, the state variables $Y_{ij}$ would
in fact need to be augmented with continuous variables keeping track
of the most recent update moments for the various servers.

\section{Synchronous updates}
\label{sec:sync}

In this section we examine the fluid limit for synchronous updates.
In Subsection~\ref{subsec:dynasync} we provide a description of the
fluid-limit trajectory, along with an intuitive interpretation,
numerical illustration and comparison with simulation.
In Subsection~\ref{subsec:fpsync} the fluid-limit analysis will be
used to derive a finite upper bound for the queue length on fluid scale
for any given update frequency $\d>0$ (Proposition~\ref{prop:sync2})
and to show that in the long term queueing vanishes on fluid level for
a sufficiently high update frequency~$\d$ (Proposition~\ref{prop:sync1}).

\subsection{Fluid-limit dynamics}
\label{subsec:dynasync}

The fluid limit $y(t)$ (in between successive update moments)
satisfies the system of differential equations
\begin{equation}
\begin{split}
\label{eq:fl}
\frac{dy_{i,j}(t)}{dt} =&
~y_{i+1,j}(t) \mathds{1}\{i < j\} - y_{i,j}(t) \mathds{1}\{i>0\} \\
&+
\lambda p_{i-1,j-1}(t) \mathds{1}\{i > 0\} - \lambda p_{i,j}(t),
\end{split}
\end{equation}
where
\[
p_{i,j}(t) = \frac{y_{i,j}(t)}{w_j(t)} \mathds{1}\{m(t)=j\}
\]
denotes the fraction of jobs assigned to a server with queue
length~$i$ and queue estimate~$j$ in fluid state~$y$ at time $t$,
with $w_j(t) = \sum_{k = 0}^{j} y_{k,j}(t)$ denoting the fraction
of servers with queue estimate~$j$ and $m(t) = \min(j | w_j(t) > 0)$
the minimum queue estimate in fluid state~$y$ at time $t$.
At an update moment~$T$, the fluid limit shows discontinuous behavior,
with $y_{i,i}(T) = v_i(T^-)$ and $y_{i,j}(T) = 0$ for all $i < j$,
with $v_i(t) = \sum_{l = i}^{\infty} y_{i,l}(t)$ denoting the fraction
of servers with queue length~$i$ in fluid state~$y$ at time $t$.

An informal outline of the derivation of the fluid limit as stated
in~\eqref{eq:fl} is provided in Appendix~\ref{sketchsync}.

\subsubsection{Interpretation}

The above system of differential equations may be heuristically
explained as follows.
The first two terms correspond to service completions at servers
with $i + 1$ and $i$~jobs, which result in an increase and decrease
in the fraction of servers with $i$~jobs, respectively.
The third and fourth terms reflect the job assignments to servers
with the minimum queue estimate $m(t)$.
The third term captures the resulting increase in the fraction
of servers with queue estimate $m(t) + 1$, while the fourth term
captures the corresponding decrease in the fraction of servers
with queue estimate $m(t)$.

%Denote by $v_i(t) = \sum_{j = i}^{\infty} y_{i,j}(t)$ the fraction
%of servers with queue length~$i$ at time~$t$,
%and by $w_j(t) = \sum_{i = 0}^{j} y_{ij}(t)$ the fraction
%of servers with queue estimate~$j$ at time~$t$.
Summing the equations \eqref{eq:fl} over $i = 0, 1, \dots, j$ yields
\begin{align}
\begin{split}\label{eq:syncdw}
\frac{dw_j(t)}{dt} &=
\lambda [\mathds{1}\{m(t)=j-1\} - \mathds{1}\{m(t)=j\}]\\
&=
\left\{\begin{array}{ll} - \lambda & j = m(t) \\
\lambda & j = m(t) + 1 \\ 0 & j \neq m(t), m(t) + 1\end{array}\right.
\end{split}
\end{align}

\noindent
reflecting that servers with the minimum queue estimate $m(t)$
are assigned jobs, and flipped into servers with queue estimate
$m(t) + 1$, at rate~$\lambda$, and that $m(t)$ can only
increase between successive update moments.
Also note that the derivative of $y_{i,j}$ is continuous in~$t$,
except at those times~$t$ where $m(t)$ increases,
and that $y_{i,j}(t)$ is continuous in between updates since
$dy_{i,j}(t)/dt$ is bounded.

\begin{figure}
    \centering
    \begin{minipage}{0.49\textwidth}
        \centering
        \includegraphics[width=\textwidth]{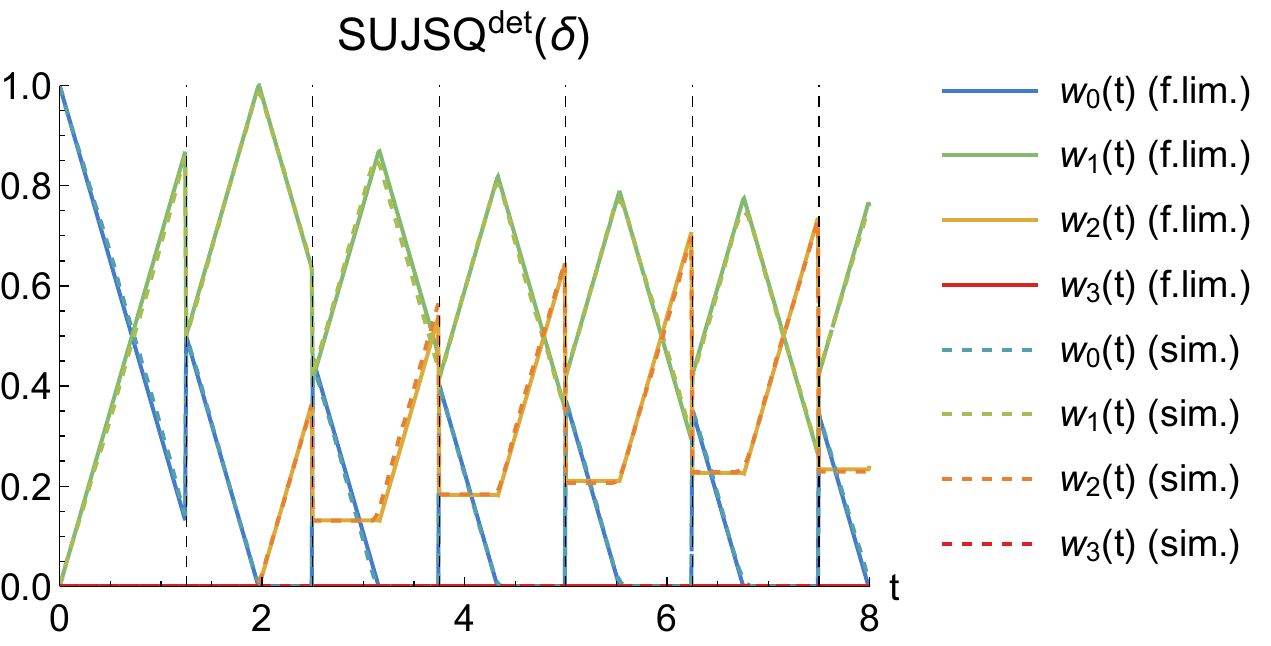} % first figure itself
        \caption{Numerical emulation of the fluid limit for SUJSQ$^{\textup{det}}(0.85)$
        		and $\lambda = 0.7$, accompanied by simulation results for $N = 1000$,
        		averaged over 10~runs.}
        	\label{fig:ss11}
    \end{minipage}\hfill
    \begin{minipage}{0.49\textwidth}
        \centering
        \includegraphics[width=\textwidth]{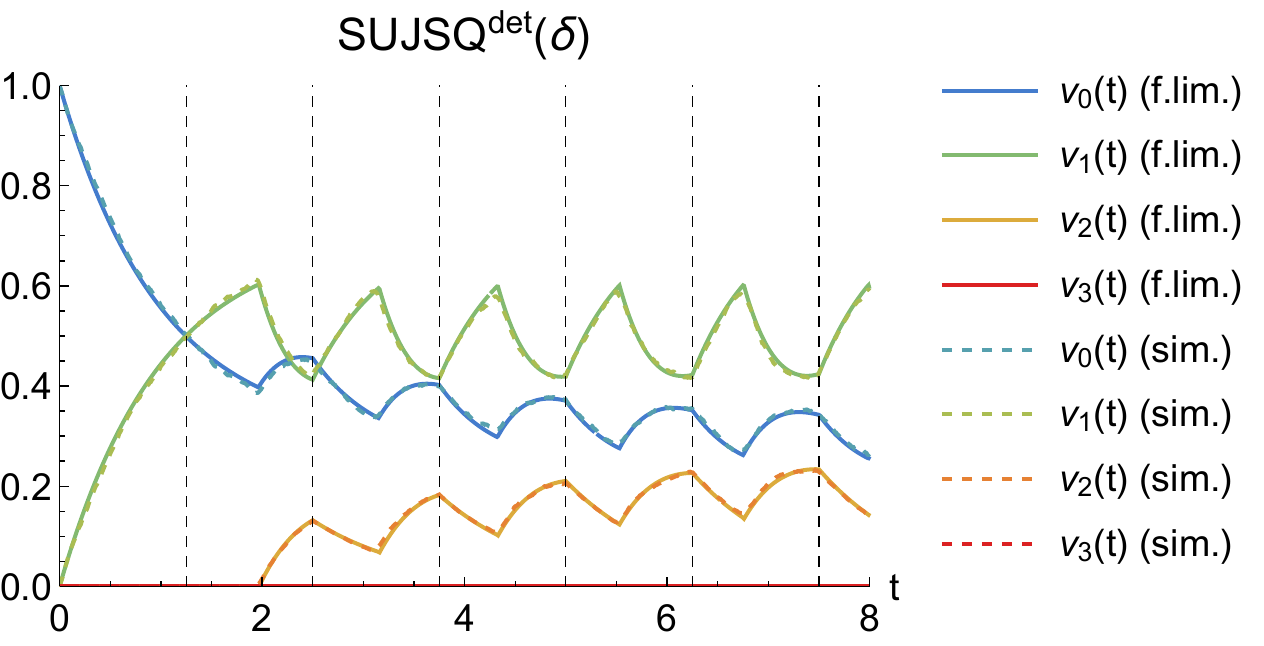} % second figure itself
        	\caption{Numerical emulation of the fluid limit for SUJSQ$^{\textup{det}}(0.85)$
        		and $\lambda = 0.7$, accompanied by simulation results for $N = 1000$,
        		averaged over 10~runs.}
        	\label{fig:ss12}
    \end{minipage}
\end{figure}

\subsubsection{Numerical illustration and comparison with simulation}
\label{simusync}

Figures~\ref{fig:ss11}--\ref{fig:ss14} show the fluid-limit
trajectories $y(t)$ as governed by the differential equations
in~\eqref{eq:fl} for $\sujsqd$, along with (fluid-scaled) variables
$y_{i,j}^N(t)$ obtained through stochastic simulation for a system
with $N = 1000$ servers and averaged over 10~runs.
Observe that the simulation results are nearly indistinguishable
from the fluid-limit trajectories,
which is in line with broader findings concerning the accuracy
of fluid and mean-field limits~\cite{Gast17,Ying16}.

Update moments at times $1/\delta, 2/\delta, \hdots$ are marked
by vertical dotted lines.
These time points can also be easily recognized by the jumps in the
fraction of servers $w_j(t)$ that have queue estimate~$j$.
Moreover, the fraction of servers $v_i(t)$ with queue length~$i$
is not differentiable in these points as well as other moments when
the minimum queue estimate changes.

Qualitatively similar results are observed for $\sujsqe$,
where the updates occur at irregular moments.
The paths still follow similar saw-tooth patterns, and the dynamics
between updates are identical, as reflected in the differential
equations in~\eqref{eq:fl}.
In particular, the fraction of servers with minimum queue estimate
decreases linearly between updates, and the estimate drastically
changes at update moments.
The results are displayed in Figures~\ref{fig:ss21}--\ref{fig:ss24},
which are deferred to Appendix~\ref{app:simressync} because of space
constraints.

In Figures~\ref{fig:ss11} and~\ref{fig:ss12},
$\delta = 0.85 < \lambda/(1-\lambda) = 7/3$,
while $\delta = 2.5 > 7/3$ in Figures~\ref{fig:ss13} and~\ref{fig:ss14}.
Since in the first scenario there are moments at which $w_0(t)$ is zero,
some jobs are sent to servers with one job, so that servers sometimes have two jobs, which means that queueing does not vanish as $N \to \infty$.
In contrast, in the second scenario, $w_0(t)$ is strictly positive
at all times and no servers appear with two or more jobs,
which implies that no queueing occurs at fluid level as $N \to \infty$.
We will return to this dichotomy in Proposition~\ref{prop:sync1}.

\subsection{Performance in the fluid limit}
\label{subsec:fpsync}

We will now use the fluid limit~\eqref{eq:fl} to gain some insight
in the performance of the $\sujsqe$ scheme.
\eqref{eq:syncdw} shows that no fixed point can exist as $w_j(t)$ has a non-zero constant derivative. 
We will establish however in Proposition~\ref{prop:sync2} that
for any positive update frequency the queue lengths on fluid scale are
essentially bounded by a finite constant.
Furthermore, in Proposition~\ref{prop:sync1} we demonstrate
that when the update frequency is above a specific threshold,
queueing basically vanishes on fluid level in the long term.

Consider the average queue length on fluid scale, denoted and defined by
\[
Q(t) = \sum_{i = 0}^{\infty} i v_i(t) =
\sum_{i = 0}^{\infty} i \sum_{j = i}^{\infty} y_{i,j}(t).
\]
%Note that $Q(t)$ is unaffected by the updates and continuous as function of~$t$.

\begin{figure}
    \centering
    \begin{minipage}{0.49\textwidth}
        \centering
        \includegraphics[width=\textwidth]{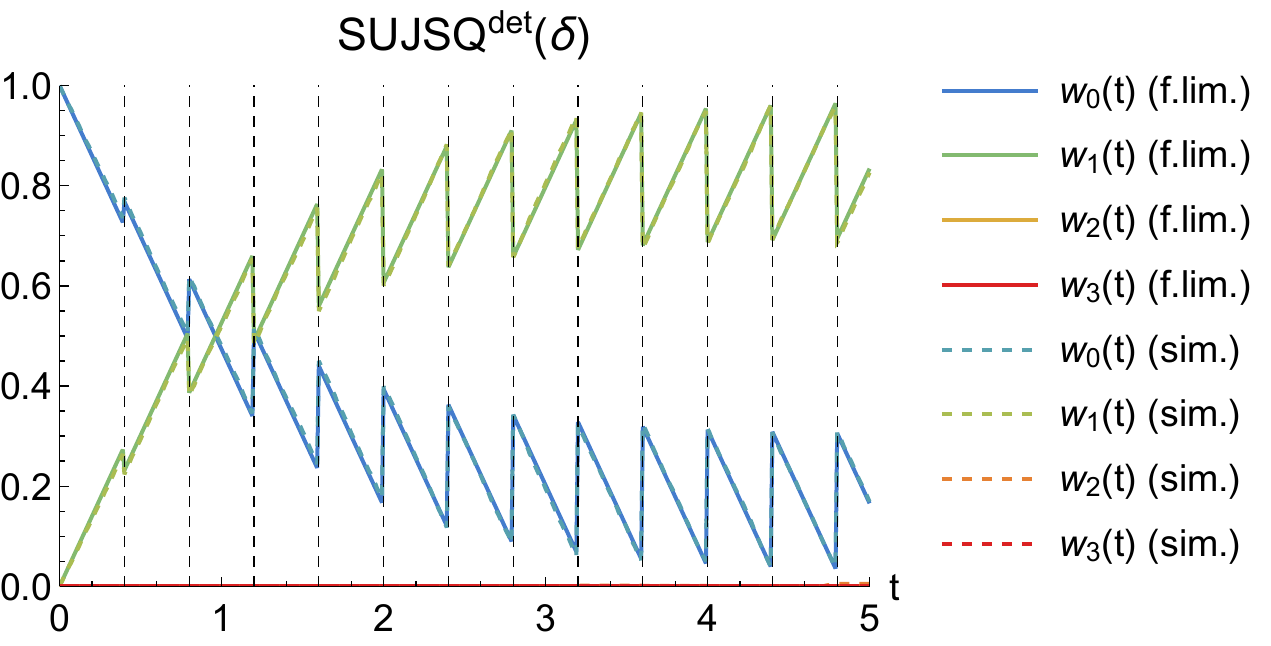} % first figure itself
        \caption{Numerical emulation of the fluid limit for SUJSQ$^{\textup{det}}(2.5)$
        		and $\lambda = 0.7$, accompanied by simulation results for $N = 1000$,
        		averaged over 10~runs.}
        	\label{fig:ss13}
    \end{minipage}\hfill
    \begin{minipage}{0.49\textwidth}
        \centering
        \includegraphics[width=\textwidth]{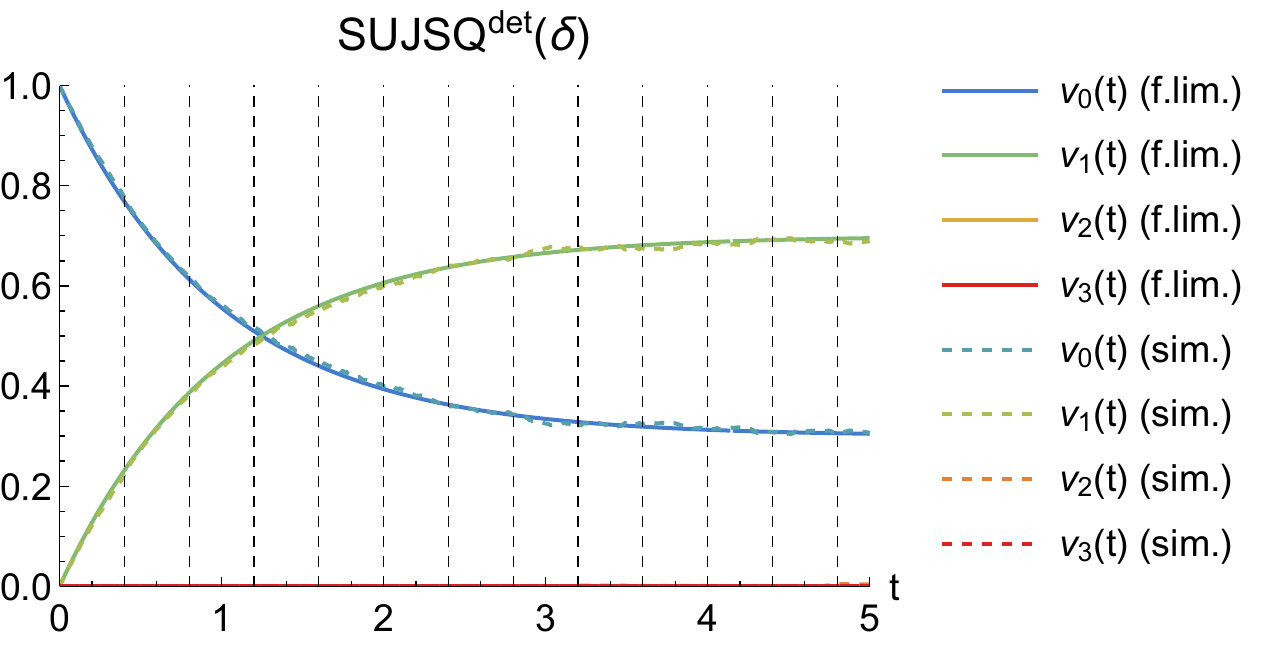} % second figure itself
        \caption{Numerical emulation of the fluid limit for SUJSQ$^{\textup{det}}(2.5)$
        		and $\lambda = 0.7$, accompanied by simulation results for $N = 1000$,
        		averaged over 10~runs.}
        	\label{fig:ss14}
    \end{minipage}
\end{figure}

Further define $z_k(t) = \sum_{i = k}^{\infty} v_i(t)$ as the fraction
of servers with queue length~$k$ or larger at time~$t$ on fluid scale,
and note that $Q(t) = \sum_{k = 1}^{\infty} z_k(t)$.
We will also refer to $Q(t)$ as the total queue `mass' on fluid scale
at time~$t$, and introduce
\[
\begin{split}
Q^{\leq K}(t) &= \sum_{k = 1}^{K} z_k(t) =
\sum_{k = 1}^{K} \sum_{i = k}^{\infty} v_i(t) \\
&=
\sum_{i = 1}^{K} \sum_{k = 1}^{i} v_i(t) +
\sum_{i = K + 1}^{\infty} \sum_{k = 1}^{K} v_i(t) \\
&=
\sum_{i = 1}^{K} i v_i(t) + \sum_{i = K + 1}^{\infty} K v_i(t) =
\sum_{i = 1}^{\infty} \min\{i, K\} v_i(t),
\end{split}
\]
and
\[
\begin{split}
Q^{> K}(t) &= Q(t) - Q^{\leq K}(t) = \sum_{k = K + 1}^{\infty} z_k(t) =
\sum_{k = K + 1}^{\infty} \sum_{i = k}^{\infty} v_i(t) \\
&=
\sum_{i = K + 1}^{\infty} \sum_{k = K + 1}^{i} v_i(t) =
\sum_{i = K + 1}^{\infty} (i - K) v_i(t)
\end{split}
\]
as the queue mass (weakly) below and (strictly) above level~$K$,
respectively.

The fluid-limit equation~\eqref{eq:fl} yields the following expression
for the derivative of $z_k(t)$ (in between updates),
\[
\begin{split}
\frac{d z_k(t)}{dt} =&
\frac{d}{dt} \sum_{i = k}^{\infty} v_i(t) =
\frac{d}{dt} \sum_{i = k}^{\infty} \sum_{j = i}^{\infty} y_{i,j}(t) =
\sum_{i = k}^{\infty} \sum_{j = i}^{\infty} \frac{d y_{i,j}(t)}{dt} \\
=&\sum_{i = k}^{\infty} \sum_{j = i}^{\infty}
\big[y_{i+1,j}(t) \mathds{1}\{i < j\} - y_{i,j}(t) \mathds{1}\{i>0\} \\
&+
\lambda p_{i-1,j-1}(t) \mathds{1}\{i > 0\} - \lambda p_{i,j}(t) \big]\\
=&
\sum_{i = k}^{\infty} \big[v_{i+1}(t) - v_i(t) \mathds{1}\{i > 0\}\\
&+
\lambda \sum_{j = i}^{\infty} p_{i-1,j-1}(t) \mathds{1}\{i > 0\} -
\lambda \sum_{j = i}^{\infty} p_{i,j}(t)\big] \\
=&
- v_k(t) \mathds{1}\{k > 0\} + \lambda \sum_{j = k - 1}^{\infty}
p_{k-1,j}(t) \mathds{1}\{k > 0\},
\end{split}
\]
and
\begin{equation}
\begin{split}
\frac{d Q^{> K}(t)}{dt} =& \frac{d}{dt} \sum_{k = K + 1}^{\infty} z_k(t) =
\sum_{k = K + 1}^{\infty} \frac{d z_k(t)}{dt} \\
=&
- \sum_{k = K + 1}^{\infty} v_k(t) \mathds{1}\{k > 0\} \\
&+
\lambda \sum_{k = K + 1}^{\infty} \sum_{j = k - 1}^{\infty}
p_{k-1,j}(t) \mathds{1}\{k > 0\} \\
=&
- z_{K + 1}(t) + \lambda \sum_{k = K}^{\infty} \sum_{j = k}^{\infty}
\frac{y_{k,j}(t)}{w_j(t)} \mathds{1}\{m(t) = j\}.
\end{split}
\label{derivativequeuemass1}
\end{equation}
This may be interpreted by noting that the queue mass above level~$K$
increases due to arriving jobs being assigned to servers with queue
length~$K$ or larger and decreases due to jobs being completed
by servers with queue length $K + 1$ or larger.
In particular, we find that
\begin{equation}
\frac{d Q^{> K}(t)}{dt} = - z_{K + 1}(t)
\label{derivativequeuemass2}
\end{equation}
for all $K \geq m(t) + 1$.

Taking $K = 0$ and noting that $Q^{> 0}(t) \equiv Q(t)$, we obtain
\[
\frac{d Q(t)}{dt} = \lambda - z_1(t) = \lambda - [1 - v_0(t)],
\]
and thus
\begin{equation}
\label{eq:dyn}
Q(T) = Q(0) + \lambda T - \int_0^T [1 - v_0(t)] dt.
\end{equation}
This reflects that the average queue length on fluid scale at time~$T$
is obtained by adding the number of arrivals $\lambda T$ during
$[0, T]$ and subtracting the number of service completions, which
corresponds to the cumulative fraction of busy servers $1 - v_0(t)$. \\

The next lemma follows directly from~\eqref{derivativequeuemass2}, and shows that the queue mass above level~$K$ decreases
when the minimum queue estimate on fluid scale is strictly below~$K$,
so that there are no arrivals to servers with queue length~$K$ or larger.

\begin{lemma}

\label{basic1}

If $m(s) \leq K - 1$ for all $s \in [0, t)$,
then $Q^{> K}(t) \leq Q^{> K}(0)$.

\end{lemma}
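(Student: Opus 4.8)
The plan is to derive the claim as essentially an immediate corollary of equation~\eqref{derivativequeuemass2}, which states that $\frac{d}{dt} Q^{> K}(t) = - z_{K+1}(t)$ whenever $K \geq m(t) + 1$. The hypothesis $m(s) \leq K - 1$ for all $s \in [0,t)$ means precisely $K \geq m(s) + 1$ on that interval, so \eqref{derivativequeuemass2} applies throughout $[0,t)$.

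\medskip

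First I would fix the interval $[0,t)$ and observe that the minimum queue estimate $m(s)$ is piecewise constant in $s$ (it changes only at the finitely many times in $[0,t)$ where the set of occupied queue estimates shifts, and the excerpt already notes $m(\cdot)$ can only increase between updates and that the relevant quantities are continuous in between). On each subinterval where $m(s)$ is constant and $\leq K-1$, equation~\eqref{derivativequeuemass2} gives $\frac{d}{dt}Q^{>K}(s) = -z_{K+1}(s) \leq 0$, since $z_{K+1}(s) = \sum_{i \geq K+1} v_i(s) \geq 0$ is a sum of nonnegative fluid fractions. Next I would note that $Q^{>K}(\cdot)$ is continuous on $[0,t)$: away from the exceptional times this follows from differentiability, and at the exceptional times one checks that no jump occurs (between updates the dynamics~\eqref{eq:fl} keep all $y_{i,j}$ continuous because their derivatives are bounded, and the hypothesis $m(s) \leq K-1 < K$ precludes any update moment in $[0,t)$ from injecting mass into levels $\geq K$ — indeed, updates only lower estimates and the only discontinuity at an update sets $y_{i,j}(T)=0$ for $i<j$, which does not increase $Q^{>K}$). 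Hence $Q^{>K}$ is continuous and non-increasing on $[0,t)$, and by continuity extends to $[0,t]$ with the same monotonicity, giving $Q^{>K}(t) \leq Q^{>K}(0)$.

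\medskip

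Alternatively, and perhaps more cleanly, I would integrate: $Q^{>K}(t) - Q^{>K}(0) = -\int_0^t z_{K+1}(s)\, \dd s \leq 0$, using that \eqref{derivativequeuemass2} holds for a.e.\ $s \in [0,t)$ and that $Q^{>K}$ is absolutely continuous there. The sign of the integrand is the whole point: $z_{K+1}(s) \geq 0$ always, so the integral is nonnegative and subtracting it can only decrease $Q^{>K}$.

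\medskip

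The main obstacle — really the only subtlety — is justifying that \eqref{derivativequeuemass2} may be applied on all of $[0,t)$ rather than just at a single instant, i.e.\ handling the behavior of $Q^{>K}$ at the times where $m(s)$ changes value and (a priori) at potential update moments. Both are dispatched by the observation that under the hypothesis $m(s) \leq K-1$ on $[0,t)$ no update moment can occur in $[0,t)$ in a way that raises mass at level $K$ or above, so $Q^{>K}$ remains continuous and the pointwise derivative bound integrates without correction terms. Everything else is bookkeeping that has already been done in the derivation of~\eqref{derivativequeuemass1}–\eqref{derivativequeuemass2} in the excerpt.
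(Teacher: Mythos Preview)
Your proposal is correct and follows exactly the approach the paper indicates: the paper simply states that the lemma ``follows directly from~\eqref{derivativequeuemass2}'', and your argument is precisely the integration of that identity over $[0,t)$ using $z_{K+1}(s)\geq 0$. Your additional remarks about continuity across changes in $m(\cdot)$ and update moments are more detail than the paper provides (and note that $Q^{>K}$ depends only on the $v_i$, which are unaffected by updates), but the core idea is identical.
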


We now proceed to derive a specific characterization of the decline
in the queue mass above level~$K$ when the minimum queue estimate
on fluid scale is strictly below~$K$.

For conciseness, denote
\[
A(L, t) = \sum_{l = 0}^{L} (L - l) \alpha_l(t)
\]
and
\newcommand{\inblocksP}{\sum_{l = 0}^{L} l \alpha_l(t) +
L \sum_{l = L + 1}^{\infty} \alpha_l(t)}
\[
B(L, t) := \inblocksP
\]
with $\alpha_l(t) = \frac{t^l}{l!} \ee^{- t}$,
and let $G$ be a Poisson random variable with parameter~$t$.
Note that $A(L, t) = {\mathbb E}[\max\{L - G, 0\}]$
and $B(L, t) = {\mathbb E}[\min\{G, L\}]$ so that $A(L, t) + B(L, t) = L$,
and in particular $B(L, t) \leq L$.
Observe that $A(L, T)$ may be interpreted as the expected queue length
after a time interval of length~$T$ at a single server with initial
queue length~$L$, unit-exponential service times and no arrivals,
while $B(L, T)$ may be interpreted as the expected number of service
completions during that time period.
%In particular, note that
%\begin{equation}
%\label{eq:isL}
%\inblocksP < \sum_{k = 0}^{L} L \ee^{- T} \frac{T^k}{k!} +
%L \sum_{k = L + 1}^{\infty} \ee^{- T} \frac{T^k}{k!} 
%\sum_{k=0}^\infty e^{-T}\frac{T^k}{k!} = L.
%\end{equation}

\begin{lemma}

\label{lem:fund}

For any $K \geq 0$, $L \geq 1$, $t \geq 0$,
\[
\begin{split}
\int_0^t z_{K + 1}(s) ds
&\geq \left[Q^{\leq K + L}(0) - Q^{\leq K}(0)\right]
\left[1 - \frac{A(L, t)}{L}\right] \\
&= \frac{1}{L} \left[Q^{\leq K + L}(0) - Q^{\leq K}(0)\right] B(L, T).
\end{split}
\]

\end{lemma}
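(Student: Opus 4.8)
The plan is to track the queue mass between levels $K$ and $K+L$ and lower bound its decay by comparing the true multi-server dynamics to a collection of independent single-server queues with no arrivals. Fix $K \geq 0$ and $L \geq 1$. Since $m(s) \leq K$ fails only when $m(s) \geq K+1$, and by the hypothesis structure we work on intervals where $m(s) \leq K$ for all relevant $s$ (this is the regime in which the bound is meaningful; indeed if $m(s)$ ever exceeds $K$ then $Q^{\leq K+L}(s) - Q^{\leq K}(s)$ may receive arrivals, so one restricts attention to the interval before $m$ first reaches $K+1$, which is where the estimate is used in Proposition~\ref{prop:sync2}). On such an interval, servers with queue length in $[K+1, K+L]$ receive no new jobs, so each such server's queue length evolves purely by unit-exponential service completions — it simply drains.

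The key step is a coupling/monotonicity argument. Consider the quantity $R(t) := Q^{\leq K+L}(t) - Q^{\leq K}(t) = \sum_{i} \left(\min\{i, K+L\} - \min\{i, K\}\right) v_i(t)$, which counts, with weights in $\{0, 1, \dots, L\}$, the queue mass sitting strictly above level $K$ but capped at $L$ units per server. I would first argue that on the interval in question, $R(t)$ can only decrease, and more precisely that a server contributing weight $\min\{i,K+L\}-\min\{i,K\}$ at time $0$ (with true queue length $i \geq K+1$) contributes in expectation at least $A(\min\{i-K, L\}, t)$ at time $t$ under pure draining — this is exactly the interpretation of $A(L,T)$ stated just before the lemma as the expected residual queue length of a single draining server. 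The worst case (least draining relative to its weight) is a server that starts with queue length exactly $K+L$, contributing full weight $L$ and draining to expected residual $A(L,t)$; a server starting lower drains proportionally at least as fast relative to its (smaller) weight. Hence $\E[\text{weighted mass at time } t] \geq R(0) \cdot \frac{A(L,t)}{L}$, giving $R(t) \geq R(0) A(L,t)/L$ is the wrong direction — instead I want the \emph{cumulative drained mass}, which is $R(0) - R(t)$, and the flow out of the band $(K, K+L]$ across its lower boundary is captured by $z_{K+1}$.

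The cleanest route is therefore to integrate the dynamics directly: from \eqref{derivativequeuemass2}, $\frac{d}{dt}Q^{>K}(t) = -z_{K+1}(t)$, and similarly $\frac{d}{dt}Q^{>K+L}(t) = -z_{K+L+1}(t)$ whenever $K+L \geq m(t)+1$, so $\frac{d}{dt}R(t) = \frac{d}{dt}(Q^{>K} - Q^{>K+L}) = -z_{K+1}(t) + z_{K+L+1}(t)$, i.e. $R(0) - R(t) = \int_0^t z_{K+1}(s)\,ds - \int_0^t z_{K+L+1}(s)\,ds \leq \int_0^t z_{K+1}(s)\,ds$. So it suffices to lower bound $R(0) - R(t)$, the net mass that has drained out of the capped band, by $R(0)\left[1 - A(L,t)/L\right]$; equivalently $R(t) \leq R(0) A(L,t)/L$. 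This last inequality is where the single-server stochastic comparison does the work: each unit of capped mass (there are, in the weighted sense, $R(0)$ of them, organized into groups of size at most $L$ on individual servers) survives to time $t$ with probability controlled by the draining of a single exponential server, and the ``capping at $L$'' means the relevant survival functional is $A(\cdot, t)/(\cdot)$ evaluated at the server's cap, which is maximized (slowest decay) at cap $L$. The main obstacle I anticipate is making the ``maximized at cap $L$'' monotonicity precise — that $A(\ell, t)/\ell$ is nondecreasing in $\ell \in \{1, \dots, L\}$, equivalently that $B(\ell,t)/\ell = 1 - A(\ell,t)/\ell$ is nonincreasing in $\ell$, which amounts to concavity-type properties of $\ell \mapsto \E[\min\{G,\ell\}]$ for $G$ Poisson; this should follow since the increments $B(\ell+1,t) - B(\ell,t) = \P(G \geq \ell+1)$ are nonincreasing in $\ell$, a standard fact. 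The final equality in the lemma statement is then immediate from $A(L,t) + B(L,t) = L$ and the notational switch from $t$ to $T$ (which I would flag as a typo to be read as $t$, or simply present with $t$ throughout).
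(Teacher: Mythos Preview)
Your approach has a genuine gap. The core of your argument is to show $R(t) \leq R(0)\,A(L,t)/L$ (equivalently $R(0)-R(t) \geq R(0)\,B(L,t)/L$), but this inequality is \emph{false} whenever there is initial mass above level $K+L$. Take $K=0$, $L=1$, $v_2(0)=1$, and suppose no arrivals reach queue level $1$ or higher. Then $R(t)=z_1(t)=\ee^{-t}(1+t)$ while $R(0)\,A(1,t)=\ee^{-t}$, so $R(t)>R(0)\,A(L,t)/L$ for every $t>0$. The problem is influx from above: servers starting with queue length $i>K+L$ contribute weight $L$ to $R(0)$ but can keep contributing weight $L$ to $R(t)$ long after time $0$; the positions $K+1,\dots,K+L$ drain slowly because they are continually replenished from above. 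Your identity $R(0)-R(t)=\int_0^t z_{K+1}-\int_0^t z_{K+L+1}$ is exactly saying that you have discarded this influx, but then you cannot recover the needed bound on what remains. A secondary issue: the lemma is stated and used unconditionally (e.g.\ in Corollary~\ref{corollary1} with $K=0$), so restricting to intervals with $m(s)\leq K$ (which should in any case be $m(s)\leq K-1$ for \eqref{derivativequeuemass2} to apply) does not suffice.

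The paper avoids both problems by comparing at the level of $z_k$ rather than the band $R$. It introduces the no-arrivals fluid system $\tilde y$, shows $\tilde z_k(t)\leq z_k(t)$ for all $k$ and $t$ by a first-crossing derivative argument (which needs no hypothesis on $m$, since arrivals only help $z_k$), and then computes $\int_0^t \tilde z_{K+1}(s)\,ds = Q^{>K}(0)-\tilde Q^{>K}(t)$ explicitly: this equals $Q^{>K}(0)-\sum_{l\geq 1} v_{K+l}(0)\,A(l,t)$. To bound the sum, the paper uses \emph{two} inequalities: your monotonicity $A(l,t)/l \leq A(L,t)/L$ for $l\leq L$, \emph{and} the tail bound $A(l,t)\leq A(L,t)+(l-L)$ for $l>L$. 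The second is what handles the mass above $K+L$ and is precisely what your approach is missing.
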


The proof of Lemma~\ref{lem:fund} involves a detailed analysis
of $z_{K+1}(t)$.
In the lemma special attention goes to the mass of jobs that are
queued in positions $K+1$ up to $K+L$,
represented by $Q^{\leq K + L}(0) - Q^{\leq K}(0)$.
The decline in this mass is no less than the decline in a situation where
the same total number of jobs reside with servers have either~$0$ jobs or
exactly $L$~jobs (so a fraction $[Q^{\leq K + L}(0) - Q^{\leq K}(0)] / L$
of the servers will have $L$~jobs).
Finally, $A(L, t)$ represents the expected number of jobs that remain
at time~$t$ at each of the servers with $L$~jobs, while $B(L, t)$
represents the expected number of jobs that have been completed
at time~$t$ by each of these servers.
These observations will be made rigorous in the proof
in Appendix~\ref{app:lemfund}.\\

The next lemma follows directly
from~\eqref{derivativequeuemass1} and~\eqref{derivativequeuemass2}
in conjunction with Lemma~\ref{lem:fund}. 

\begin{lemma}

\label{basic2}

For any $K \geq 0$, $L \geq 1$, $t \geq 0$,
\[
Q^{> K}(t) \leq \lambda t + Q^{> K + L}(0) +
\frac{1}{L} \left[Q^{\leq K + L}(0) - Q^{\leq K}(0)\right] A(L, t),
\]
or equivalently,
\[
Q^{> K}(t) - Q^{> K}(0) \leq \lambda t -
\frac{1}{L} \left[Q^{\leq K + L}(0) - Q^{\leq K}(0)\right] B(L, t).
\]

If $m(y(s)) \leq K - 1$ for all $s \in [0, t)$, then for any $L \geq 1$
\[
Q^{> K}(t) \leq Q^{> K + L}(0) +
\frac{1}{L} \left[Q^{\leq K + L}(0) - Q^{\leq K}(0)\right] A(L, t),
\]
or equivalently,
\[
Q^{> K}(t) - Q^{> K}(0) \leq
- \frac{1}{L} \left[Q^{\leq K + L}(0) - Q^{\leq K}(0)\right] B(L, t).
\]
In particular, taking $L = 1$,
\[
\begin{split}
Q^{> K}(t)
&\leq
Q^{> K + 1}(0) + \left[Q^{\leq K + 1}(0) - Q^{\leq K}(0)\right] \ee^{- t} \\
&= Q^{> K + 1}(0) + z_{K + 1}(0) \ee^{- t},
\end{split}
\]
yielding
\begin{equation}
Q^{> K}(t) \leq
\ee^{- t} Q^{> K}(0) + [1 - \ee^{- t}] Q^{> K + 1}(0).
\end{equation}
\end{lemma}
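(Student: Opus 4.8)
The plan is to integrate the evolution equation~\eqref{derivativequeuemass1} for $Q^{>K}$ and then substitute into the result the lower bound on $\int_0^t z_{K+1}(s)\,ds$ supplied by Lemma~\ref{lem:fund}. For the first (unconditional) pair of inequalities, I would first note that the arrival term on the right-hand side of~\eqref{derivativequeuemass1} is at most~$\lambda$: the indicator $\mathds{1}\{m(t)=j\}$ leaves only the single value $j=m(t)$, and because $\sum_{k=0}^{m(t)} y_{k,m(t)}(t)=w_{m(t)}(t)$ the remaining sum $\sum_{k\geq K} y_{k,m(t)}(t)/w_{m(t)}(t)$ is bounded by~$1$. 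Hence $\frac{dQ^{>K}(t)}{dt}\leq \lambda - z_{K+1}(t)$, and integrating over $[0,t]$ gives $Q^{>K}(t)-Q^{>K}(0)\leq \lambda t - \int_0^t z_{K+1}(s)\,ds$. Inserting $\int_0^t z_{K+1}(s)\,ds\geq \frac{1}{L}[Q^{\leq K+L}(0)-Q^{\leq K}(0)]B(L,t)$ from Lemma~\ref{lem:fund} gives the second ("equivalently") form, and the first form follows by a one-line rearrangement using $A(L,t)+B(L,t)=L$ together with the telescoping identity $Q^{\leq K+L}(0)-Q^{\leq K}(0)=Q^{>K}(0)-Q^{>K+L}(0)$.

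For the conditional pair, I would observe that under the hypothesis $m(y(s))\leq K-1$ for all $s\in[0,t)$ the inner sum over $k\geq K$ in~\eqref{derivativequeuemass1} is empty, since $j=m(t)<K$ forces $k\leq m(t)<K$; this is exactly~\eqref{derivativequeuemass2}, so $\frac{dQ^{>K}(t)}{dt}=-z_{K+1}(t)$. The same integrate-then-apply-Lemma~\ref{lem:fund} argument goes through with the $\lambda t$ term deleted, yielding both equivalent forms. Finally, the $L=1$ specialization follows by computing $A(1,t)=\mathbb{E}[\max\{1-G,0\}]=\mathbb{P}(G=0)=\ee^{-t}$, so the conditional bound with $L=1$ reads $Q^{>K}(t)\leq Q^{>K+1}(0)+[Q^{\leq K+1}(0)-Q^{\leq K}(0)]\ee^{-t}$; using $Q^{\leq K+1}(0)-Q^{\leq K}(0)=z_{K+1}(0)=Q^{>K}(0)-Q^{>K+1}(0)$ and collecting terms gives $Q^{>K}(t)\leq \ee^{-t}Q^{>K}(0)+[1-\ee^{-t}]Q^{>K+1}(0)$.

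Once~\eqref{derivativequeuemass1}, \eqref{derivativequeuemass2} and Lemma~\ref{lem:fund} are in hand the argument is essentially bookkeeping, so I do not anticipate a serious obstacle: the one substantive input, the lower bound on $\int_0^t z_{K+1}(s)\,ds$, has already been isolated in Lemma~\ref{lem:fund}. The only steps requiring a moment's care are the elementary bound $\sum_{k\geq K} y_{k,m(t)}(t)/w_{m(t)}(t)\leq 1$ on the arrival rate into queues of length at least~$K$, the observation that this term is exactly zero whenever the minimum queue estimate lies strictly below~$K$, and the routine algebra converting between the $A$- and $B$-forms via $A+B=L$ and the telescoping relations among the $Q^{\leq k}$, $Q^{>k}$ and $z_k$.
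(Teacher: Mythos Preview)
Your proposal is correct and matches the paper's own approach: the paper states that the lemma ``follows directly from~\eqref{derivativequeuemass1} and~\eqref{derivativequeuemass2} in conjunction with Lemma~\ref{lem:fund},'' which is precisely the integrate-then-apply-Lemma~\ref{lem:fund} scheme you describe. Your write-up in fact supplies more detail than the paper does, including the explicit bound on the arrival term, the $A+B=L$ conversion, and the $L=1$ specialization via $A(1,t)=\ee^{-t}$.
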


The next lemma provides a simple condition for the minimum queue
estimate on fluid scale to remain strictly below~$K$ throughout the
interval $[0, t)$ in terms of $Q(0)$ and the proof is provided in Appendix \ref{app:lemms}

\begin{lemma}

\label{auxiliary}

If $Q(0) \leq K - \lambda t$, then $m(s) \leq K - 1$ for all
$s \in [0, t)$.

\end{lemma}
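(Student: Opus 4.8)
The plan is to argue by contradiction, combining the total mass-balance identity~\eqref{eq:dyn} with the observation that the minimum queue estimate $m(t)$ can only increase between updates (recall the discussion following~\eqref{eq:syncdw}), so that $[0,t)$ splits into finitely many intervals on which $m(\cdot)$ is constant. Suppose $m(s) \geq K$ for some $s \in [0,t)$; let $\tau = \inf\{s \geq 0 : m(s) \geq K\}$ be the first such time, so $\tau < t$ and $m(s) \leq K-1$ for all $s \in [0,\tau)$. Since $m(\cdot)$ is integer-valued and nondecreasing between updates, $m(s) = K-1$ on some left-neighbourhood of $\tau$, and $m(\tau) = K$ (the estimate jumps up by one, because $w_{K-1}(\tau^-)$ has hit zero while $w_K$ became the minimum-index positive coordinate). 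The key structural fact is that $m$ reaching level $K$ forces $v_0(\tau) + v_1(\tau) + \cdots$ restricted appropriately to be small: more precisely, if $m(\tau) = K$ then every server with queue estimate $< K$ has been emptied into estimate class $K$ or higher, but what we really need is a lower bound on $Q(\tau)$.

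The cleaner route avoids tracking $m$ in detail and instead lower-bounds $Q(\tau)$ directly. I claim $m(s) \geq K$ for all $s$ in a right-neighbourhood of $\tau$ implies $z_1(\tau) = 1$, i.e. $v_0(\tau) = 0$: indeed, if the minimum queue estimate is $\geq K \geq 1$, then no server has queue estimate $0$, hence (since estimates upper-bound queue lengths, $j \geq i$) no server has queue length $0$ either, so $v_0(\tau) = 0$ and therefore $Q(\tau) = \sum_k z_k(\tau) \geq z_1(\tau) = 1 \geq K$ would already be a contradiction when $K \geq 1$... but that is too strong and wrong in general, so instead I use the quantitative version. Between time $0$ and $\tau$, every arrival is routed to a server with queue estimate $m(s) \leq K-1$, hence with queue length at most $K-1$; such an assignment raises that server's queue length to at most $K$, so it contributes at most... the point is that on $[0,\tau)$ arrivals only ever land on servers whose post-arrival queue length is $\leq K$, so they do not by themselves create mass above level $K$. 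By Lemma~\ref{basic1} with this $K$ (using $m(s) \leq K-1$ on $[0,\tau)$), $Q^{>K}(\tau) \leq Q^{>K}(0)$. Meanwhile the mass-balance~\eqref{eq:dyn} on $[0,\tau]$ gives $Q(\tau) = Q(0) + \lambda\tau - \int_0^\tau [1 - v_0(s)]\,ds \leq Q(0) + \lambda\tau$, and since $\tau < t$, $Q(\tau) \leq Q(0) + \lambda t \leq K$.

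Now for the contradiction: $m(\tau) = K$ means $w_j(\tau) = 0$ for all $j < K$ and $w_K(\tau) > 0$; since all the servers that at time $0$ had queue estimate $< K$ and whose estimates have not been updated (no updates occur strictly between update moments, and we may first reduce to a single inter-update interval — applying the result on each piece and chaining) have had their estimates grow only by arrivals, the total queue length $Q(\tau)$ must be at least the number of servers pushed past estimate $K$. More directly: by~\eqref{eq:syncdw}, $w_K(t) - w_K(0)$ increases at rate $\lambda$ exactly while $m(t) = K-1$, and $w_{<K}(t) := \sum_{j<K} w_j(t)$ decreases at rate $\lambda$ over all of $[0,\tau)$ (since $m(s) \leq K-1$ there), starting from $w_{<K}(0) \leq 1$ and reaching $0$ at $\tau$; hence $\lambda\tau = w_{<K}(0) - w_{<K}(\tau) \leq 1$, which does not yet contradict anything. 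The actual contradiction comes from queue length: each server counted in $w_{<K}(0) - w_{<K}(\tau)$ received, net, enough arrivals to push its estimate to $\geq K$, i.e. at least $K - j \geq 1$ arrivals for an estimate-$j$ server, and each arrival permanently adds $1$ to $Q$ unless matched by a departure; tracking this carefully, $Q$ must exceed $K$ at $\tau$, contradicting $Q(\tau) \leq K$.

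The main obstacle is making the last paragraph rigorous: the clean statement is that $m(s) \leq K-1$ for all $s \in [0,t)$ is essentially \emph{equivalent} to the minimum queue estimate never reaching $K$, and the quantitative link is precisely~\eqref{eq:syncdw}, which says $w_{<K}(t) = w_{<K}(0) - \lambda \int_0^t \mathds{1}\{m(s) \leq K-1\}\,ds$ as long as no update intervenes. So I expect the actual proof to proceed on a single inter-update interval: if $m$ first hits $K$ at $\tau < t$, then $\int_0^\tau \mathds{1}\{m(s) \leq K-1\}\,ds = \tau$, and one shows $w_{<K}(\tau)=0$ forces, via the queue-mass accounting~\eqref{eq:dyn} together with the fact that vacating estimate class $<K$ requires raising those servers' true queue lengths above any previously achieved level, the bound $Q(0) > K - \lambda\tau \geq K - \lambda t$, contradicting the hypothesis. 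The delicate bookkeeping — relating the emptying of the low estimate classes to a genuine lower bound on $Q(\tau)$, and handling the multiple inter-update pieces by iteration — is where the care is needed; everything else is a direct appeal to~\eqref{eq:dyn},~\eqref{eq:syncdw}, and Lemma~\ref{basic1}.
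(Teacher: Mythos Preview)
Your contradiction approach has a genuine gap, and the paper takes a much more direct route. Two specific issues:

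First, your claim that $w_{<K}(t) = \sum_{j<K} w_j(t)$ decreases at rate~$\lambda$ throughout $[0,\tau)$ is false. From~\eqref{eq:syncdw}, when $m(s) = j \leq K-2$ both $w_j$ and $w_{j+1}$ lie in the sum, so their $\pm\lambda$ contributions cancel and $w_{<K}$ is constant; it decreases at rate~$\lambda$ only while $m(s) = K-1$. Consequently $\lambda\tau \neq w_{<K}(0)$ in general.

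Second, your attempts to lower-bound the true queue mass $Q(\tau)$ cannot work as stated: departures steadily drain $Q$ even as the estimates climb, so knowing that all \emph{estimates} are $\geq K$ at time~$\tau$ tells you nothing directly about $Q(\tau)$. The phrase ``vacating estimate class $<K$ requires raising those servers' true queue lengths'' is exactly where the argument breaks, because raising an estimate from $j$ to $K$ requires $K-j$ arrivals to that server, but each arrival may be offset by a departure before time~$\tau$.

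The clean quantity to track is the total queue \emph{estimate} $E(t) := \sum_j j\, w_j(t)$. Between updates $E(0) = Q(0)$ (estimates equal true lengths just after an update), $E$ grows at exactly rate~$\lambda$ by~\eqref{eq:syncdw}, and $m(\tau)=K$ forces $E(\tau) \geq K$; hence $Q(0) + \lambda\tau \geq K$, giving $\tau \geq (K - Q(0))/\lambda \geq t$, the desired contradiction. Equivalently, and this is what the paper does, one simply computes from~\eqref{eq:syncdw} that $m$ first reaches level~$K$ at time $\tau = \frac{1}{\lambda}\sum_{i=0}^{K-1}(K-i)v_i(0)$, then checks algebraically that $\sum_{i=0}^{K-1}(K-i)v_i(0) = K - Q^{\leq K}(0) \geq K - Q(0) \geq \lambda t$. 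No contradiction, no appeal to~\eqref{eq:dyn} or Lemma~\ref{basic1}: it is a three-line direct computation.
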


We will henceforth say that $L$ is large enough if
\begin{equation}
\lambda T < \sigma(L; \lambda, T) =
\left(1 - \frac{\lambda T + 1}{L}\right) B(L, T),
\label{eq:cond}
\end{equation}
and define $s(\lambda,T) = \min\{L: \lambda T < \sigma(L; \lambda, T)\}$,
which may be loosely thought of as the maximum queue length on fluid
scale in the sense of the next proposition.
\newcommand{\maxq}{{s}}
Note that $\sigma(L; \lambda, T) \uparrow T$ as $L \to \infty$,
ensuring that $\maxq(\lambda, T)$ is finite for any $\lambda < 1$.

\begin{proposition}[Bounded queue length for $\sujsqd$]

\label{prop:sync2}

For any initial state $y(0)$ with finite queue mass $Q(0) < \infty$,
the fraction of servers on fluid scale with a queue length larger
than $s(\lambda,T)$ vanishes over time.
Additionally, if the initial queue mass $Q(0)$ is sufficiently small
and the initial fraction of servers with a queue length larger
than $s(\lambda,T)$ is zero, then that fraction will remain zero forever. 

\end{proposition}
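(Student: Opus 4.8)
The plan is to combine the per-update-cycle contraction estimate from Lemma~\ref{basic2} with the fact that between updates all queue mass above level~$m(t)+1$ can only decay (Lemma~\ref{basic1}), so that over successive update intervals of length $T = 1/\delta$ the queue mass above level $s(\lambda,T)$ is driven to zero. First I would fix $K = s(\lambda,T)$ and use the definition~\eqref{eq:cond} of ``$L$ large enough'' with $L = 1$ inside the update interval: if at the start of an update interval the queue mass $Q(0)$ is already at most $K$, then by Lemma~\ref{auxiliary} we have $m(s) \le K - 1$ throughout $[0,T)$, provided $Q(0) \le K - \lambda T$; this lets me invoke the second (conditional) bound in Lemma~\ref{basic2}. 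Applying that bound with a well-chosen $L$ and using that $L = s(\lambda,T)$ is by construction large enough, I get that $Q^{>K}(T) \le Q^{>K}(0) - \frac{1}{L}[Q^{\le K+L}(0) - Q^{\le K}(0)]B(L,T)$, and the condition $\lambda T < \sigma(L;\lambda,T)$ should be exactly what is needed to guarantee that the net change over one full cycle (the arrivals $\lambda T$ minus this decline) is strictly negative whenever $Q^{>K}(0) > 0$, i.e. whenever there is still mass above level $K$. This gives a strict decrease of $Q^{>K}$ by a fixed amount per cycle, bounded away from zero as long as $Q^{>K}$ stays above any positive threshold, hence $Q^{>K}(n/\delta) \to 0$ as $n \to \infty$.

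For the first assertion I would argue in two stages. Stage one: show that $Q(t)$ itself enters and stays in a bounded region. From~\eqref{eq:dyn}, $Q$ grows at rate at most $\lambda < 1$ and the first bound in Lemma~\ref{basic2} (with $K = 0$) combined with the synchronous reset controls $Q$ over each cycle; one shows $\limsup_n Q(n/\delta)$ is finite, using that $B(L,T) \uparrow T > \lambda T$ forces a genuine drift downward once the mass is large enough. Stage two: once $Q$ is bounded by some finite $C$, apply the cycle-wise contraction of $Q^{>K}$ above, starting the argument from the first update moment after which $Q \le C$; since the per-cycle decrement of $Q^{>K}$ is bounded below by a positive constant whenever $Q^{>K} \ge \varepsilon$, after finitely many cycles $Q^{>K} < \varepsilon$, and then one checks it cannot escape back above $\varepsilon$ because each cycle's net change remains nonpositive for small $Q^{>K}$. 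Letting $\varepsilon \downarrow 0$ yields $Q^{>K}(t) \to 0$, i.e. the fraction of servers with queue length exceeding $s(\lambda,T)$ vanishes, since that fraction is at most $z_{K+1}(t) \le Q^{>K}(t)$.

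For the second assertion (invariance) the argument is cleaner: suppose at an update moment $Q^{>K}(0) = 0$ and $Q(0)$ is small enough that $Q(0) \le K - \lambda T$ — this is the ``sufficiently small'' hypothesis. Then Lemma~\ref{auxiliary} gives $m(s) \le K-1$ on $[0,T)$, so Lemma~\ref{basic1} gives $Q^{>K}(T) \le Q^{>K}(0) = 0$; the synchronous update at $T$ preserves total queue mass, so $Q(T) = Q(0) \le K - \lambda T$ again (the update only moves estimates, not true queue lengths, and $Q$ counts true lengths), and the invariant is reproduced at the next cycle. By induction $Q^{>K}(n/\delta) = 0$ for all $n$, and since $Q^{>K}$ only decreases within each interval by Lemma~\ref{basic1}, it is identically zero for all $t$. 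The main obstacle I anticipate is making the drift argument in stage one fully rigorous across the discontinuous update resets — in particular verifying that the reset cannot increase $Q$ and that the quantity $Q^{\le K+L}(0) - Q^{\le K}(0)$ appearing in Lemma~\ref{basic2} is large enough (it equals $\sum_{i=K+1}^{K+L} (i-K) v_i(0) + L \sum_{i>K+L} v_i(0)$, which is at least $Q^{>K}(0)$ truncated at level $L$) to convert the abstract inequality in Lemma~\ref{basic2} into the concrete strict-decrease statement via the defining inequality~\eqref{eq:cond} of $s(\lambda,T)$.
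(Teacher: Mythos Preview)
Your high-level two-stage plan (control $Q$, then contract the tail) is in the right spirit, but two concrete steps fail.

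First, in your invariance argument you assert that ``the synchronous update at~$T$ preserves total queue mass, so $Q(T)=Q(0)$.'' The update does leave $Q$ unchanged at the instant~$T$, but $Q$ evolves on $(0,T)$ via~\eqref{eq:dyn}, so $Q(T)=Q(0)+\lambda T-\int_0^T[1-v_0(s)]\,ds$, which is not $Q(0)$ in general. To close the induction you must actually bound this quantity; this is exactly what Corollaries~\ref{corollary1} and~\ref{corollary2} (applied with $L=L^*=s(\lambda,T)$ and $Q^{>L^*}(0)=0$) accomplish, and it is the defining inequality~\eqref{eq:cond} that makes the resulting $\Delta>0$ and yields $Q(T)<Q(0)$ or $Q(T)<L^*-1-\lambda T$.

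Second, and more seriously, your Stage~two needs $Q(0)\le K-\lambda T$ with $K=s(\lambda,T)$ before you can invoke Lemma~\ref{auxiliary} and the conditional part of Lemma~\ref{basic2}. Your Stage~one only produces $Q\le C$ for some finite~$C$, with no mechanism to force $C\le s(\lambda,T)-\lambda T$. The paper bridges this gap by a descending-ladder argument (Lemma~\ref{lem:maxave}): one first picks $\bar L$ so large that $Q(0)\le\bar L-\lambda T$ and $Q^{>\bar L}(0)<\Delta$ (possible since $Q(0)<\infty$), and then alternates Lemmas~\ref{auxiliary1} and~\ref{auxiliary2} to step the level down by one each time, simultaneously pushing $Q$ below $L-1-\lambda T$ and $Q^{>L-1}$ below~$\Delta$, until the level reaches $L^*$. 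Each step is legal precisely while~\eqref{eq:cond} holds, and $L^*$ is the smallest level where it does. Without this descent you cannot pass from ``$Q$ bounded'' to ``$Q\le L^*-\lambda T$,'' so your direct contraction of $Q^{>L^*}$ never starts. Relatedly, your paragraph mixes ``$L=1$,'' ``$L=s(\lambda,T)$,'' and ``$K=s(\lambda,T)$'' in ways that conflate the two independent parameters of Lemma~\ref{basic2}; in the paper the key application underlying Corollaries~\ref{corollary1}--\ref{corollary2} is with $K=0$, not $K=L^*$.
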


%For any large enough $L$, if the initial average queue length
%on fluid scale is no larger than $L - \lambda T$, then
%$\sum_{i = L + 1} v_i(t)$ and $\sum_{i = L + 1}^{\infty} i v_i(t)$
%will continuously decrease over time, and in particular remain zero
%if that is initially the case, i.e., $\sum_{i = L + 1} v_i(0) = 0$.
%Thus, the maximum queue length will never increase beyond its initial
%value if that is equal to a large enough~$L$.
%For any initial average queue length, eventually $v_i(t)$ will vanish
%for all $i \leq L^* + 1$, where $L^* = s(\lambda,T)$. 

%Loosely speaking, the above proposition says that $L^*$ may be thought
%of as an upper bound for the maximum queue length on fluid scale
%in stationarity.

The proof of Proposition~\ref{prop:sync2} leverages Lemma~\ref{lem:fund}
and is organized as follows.
For any initial state, we can show that either the mass in the tail,
or the total mass is decreasing.
Once one of the two is below a certain level, we show that the other
decreases as well.
We show this in two lemmas.
From that point on, it is a back and forth between decreasing mass in the
tail and decreasing total mass, which is described in the final lemma.
The mass in the tail will decrease, such that the mass strictly above
level $L^* = s(\lambda, T)$ will vanish.

\iffalse
{\tt Onderstaande lemma zou nu geheel bevat moeten zijn
in fundamental lemma, en lijkt hoe dan ook nergens te worden gebruikt?}

The next lemma provides a lower bound for the number of departures.

\begin{lemma}
%\label{lem:lowerbound}
%The number of departures $\int_0^T (1-v_0(t)) dt$ with average queue length $Q(0)$ at time 0 and the maximum queue length being $L+1$ at time 0 (\colorbox{yellow}{except for a fraction $\delta$} of the servers for which the \colorbox{yellow}{maximum queue length is at most $M$}) is bounded from below:
For any $L \geq 1$,
\begin{equation}
\label{eq:lowerbound}
\int_0^T (1-v_0(t)) dt \geq \frac{Q^L(0)}{L} B(L,T).
\end{equation}
%with $Q^L(t) = \sum_{i = 0}^{\infty} \min\{i, L\} v_i(t) = \sum_{i = 0}^{L} i v_i(t) + L \sum_{i = L + 1}^{\infty} v_i(t)$.
\end{lemma}

The lemma is a special case of the fundamental lemma with $L=0$ and noting $\sum_{i=1}^M \alpha_{i-M}(t) = M - B(M,T)$. Intuitively, all queues are truncated to~$L$ at time~$0$, the remaining
$Q^L(0)$ jobs are redistributed so that servers have either~$0$ jobs
or exactly $L$~jobs, and arrivals are halted.
The number of departures in the latter scenario is given by the
expression in the right-hand side.
\fi

Define 
\begin{equation}\label{eq:delta}
\Delta = \left(1 - \frac{\lambda T + 1}{L^*}\right) B(L^*, T) - \lambda T.
\end{equation}
We now state two corollaries, which provide upper bounds for the total
queue mass at time~$T$.
The proofs are based on Lemma~\ref{lem:fund} and provided
in Appendices~\ref{c1} and~\ref{c2}.

\begin{corollary}

\label{corollary1}

If $L$ is large enough and $Q(0) \geq L - 1 - \lambda T$, then
\[
\begin{split}
Q(T) &\leq Q(0) - \Delta + \frac{Q^{> L}(0)}{L} B(L, T) \\
&<
Q(0) - \Delta + Q^{> L}(0).
\end{split}
\]

\end{corollary}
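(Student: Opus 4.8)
The plan is to specialize Lemma~\ref{basic2} to $K = 0$ and then feed in the two hypotheses. Take $K = 0$ in the second form of the first (unconditional) bound of Lemma~\ref{basic2}; using $Q^{> 0}(t) \equiv Q(t)$, $Q^{\leq 0}(0) = 0$, and $Q^{\leq L}(0) = Q(0) - Q^{> L}(0)$, this reads
\[
Q(T) - Q(0) \;\leq\; \lambda T - \frac{B(L, T)}{L}\bigl[Q(0) - Q^{> L}(0)\bigr],
\]
and rearranging,
\[
Q(T) \;\leq\; Q(0) - \Bigl[\frac{B(L, T)}{L}\, Q(0) - \lambda T\Bigr] + \frac{B(L, T)}{L}\, Q^{> L}(0).
\]
Hence the first claimed inequality follows once I show that the bracketed quantity is at least $\Delta$.

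The lower bound on $Q(0)$ does exactly this. Since $L$ is large enough, \eqref{eq:cond} gives $\sigma(L; \lambda, T) > \lambda T \geq 0$, which (as $B(L, T) > 0$) forces $1 - (\lambda T + 1)/L > 0$; therefore, from $Q(0) \geq L - 1 - \lambda T$,
\[
\frac{B(L, T)}{L}\, Q(0) \;\geq\; \frac{B(L, T)}{L}\,(L - 1 - \lambda T) \;=\; \Bigl(1 - \frac{\lambda T + 1}{L}\Bigr) B(L, T) \;=\; \sigma(L; \lambda, T).
\]
It thus suffices to check $\sigma(L; \lambda, T) - \lambda T \geq \Delta$, i.e.\ $\sigma(L; \lambda, T) \geq \sigma(s(\lambda, T); \lambda, T)$ in view of~\eqref{eq:delta}. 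This is the one place where a genuine argument is needed: I must verify that $\sigma(\cdot; \lambda, T)$ is nondecreasing over the range of large-enough indices. This holds because on that range $\sigma(L; \lambda, T)$ is the product of the two factors $1 - (\lambda T + 1)/L$ and $B(L, T) = {\mathbb E}[\min\{G, L\}]$ with $G$ Poisson with parameter~$T$, both nonnegative and nondecreasing in~$L$ there; equivalently one may invoke the already-noted monotone convergence $\sigma(L; \lambda, T) \uparrow T$. Since ``$L$ is large enough'' means $\lambda T < \sigma(L; \lambda, T)$ and $s(\lambda, T)$ is by definition the least such index, we have $L \geq s(\lambda, T)$, and monotonicity yields $\sigma(L; \lambda, T) \geq \sigma(s(\lambda, T); \lambda, T) = \Delta + \lambda T$, as required.

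Combining the two displays gives $Q(T) \leq Q(0) - \Delta + \frac{1}{L} B(L, T)\, Q^{> L}(0)$, the first assertion. The second, strict inequality is then immediate from $B(L, T) = {\mathbb E}[\min\{G, L\}] < L$ (the Poisson variable $G$ exceeds $L$ with positive probability), so that $\frac{1}{L} B(L, T)\, Q^{> L}(0) < Q^{> L}(0)$ in the relevant case $Q^{> L}(0) > 0$ (when $Q^{> L}(0) = 0$ the bound already reads $Q(T) \leq Q(0) - \Delta$, with nothing further to prove). The main obstacle is therefore not any new estimate — everything rests on Lemma~\ref{lem:fund} via Lemma~\ref{basic2} — but the bookkeeping of choosing $K = 0$ and invoking the monotonicity of $\sigma$ on the correct set of indices.
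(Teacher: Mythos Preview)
Your proof is correct and follows essentially the same route as the paper: specialize Lemma~\ref{basic2} to $K=0$, use the hypothesis $Q(0)\ge L-1-\lambda T$ to bound $\frac{B(L,T)}{L}Q(0)-\lambda T$ from below by $\sigma(L;\lambda,T)-\lambda T=\Delta_L$, and then invoke the monotonicity of $\sigma(\cdot;\lambda,T)$ (equivalently of $\Delta_L$) in~$L$ to pass to $\Delta=\Delta_{L^*}$. You are in fact more explicit than the paper about why $\sigma$ is nondecreasing on $[L^*,\infty)$ and about the edge case $Q^{>L}(0)=0$ for the strict inequality; the paper simply asserts ``$\Delta_L$ increasing in~$L$'' and writes the final strict inequality without comment.
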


\begin{corollary}

\label{corollary2}

If $L$ is large enough and $Q(0) \leq L - 1 - \lambda T$, then
\[
\begin{split}
Q(T) &\leq L - 1 - \lambda T - \Delta + \frac{Q^{> L}(0)}{L} B(L, T) \\
&<
L - 1 - \lambda T - \Delta + Q^{> L}(0).
\end{split}
\]

\end{corollary}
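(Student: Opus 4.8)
The plan is to reduce the claim to a single invocation of Lemma~\ref{basic2} and then simplify the deterministic terms until the constant $\Delta$ emerges. First I would apply the first inequality of Lemma~\ref{basic2} at time~$T$ with $K = 0$. The sum defining $Q^{\leq 0}$ is empty, so $Q^{\leq 0}(0) = 0$, while $Q^{>0}(t) \equiv Q(t)$ and $Q^{\leq L}(0) = Q(0) - Q^{>L}(0)$; substituting these into the ``equivalent'' form of that inequality and rearranging gives
\[
Q(T) \leq Q(0)\left(1 - \frac{B(L,T)}{L}\right) + \lambda T + \frac{Q^{>L}(0)}{L}\, B(L,T).
\]

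Next I would use that $B(L,T) \leq L$, so that the coefficient $1 - B(L,T)/L$ is nonnegative and the hypothesis $Q(0) \leq L - 1 - \lambda T$ may be inserted on the first term. A short computation then collapses the deterministic part: since $(L - 1 - \lambda T) + \lambda T = L - 1$ and
\[
\frac{L - 1 - \lambda T}{L}\, B(L,T) = \left(1 - \frac{\lambda T + 1}{L}\right) B(L,T) = \sigma(L;\lambda,T),
\]
one reaches $Q(T) \leq L - 1 - \sigma(L;\lambda,T) + \frac{Q^{>L}(0)}{L}\, B(L,T)$.

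The only step that is not routine bookkeeping is replacing $-\sigma(L;\lambda,T)$ by $-\lambda T - \Delta = -\sigma(L^*;\lambda,T)$, which needs $\sigma(L;\lambda,T) \geq \sigma(L^*;\lambda,T)$. For this I would observe that ``$L$ large enough'' forces $\sigma(L^*;\lambda,T) > \lambda T \geq 0$ together with $B(L^*,T) > 0$, hence $L^* > \lambda T + 1$; consequently on $[L^*,\infty)$ both factors $1 - \frac{\lambda T + 1}{L}$ and $B(L,T) = \E[\min\{G,L\}]$ are positive and nondecreasing in $L$, so their product $\sigma(\cdot\,;\lambda,T)$ is nondecreasing there, and $L \geq L^*$ (which holds since $L^*$ is by definition the smallest large-enough value) gives the comparison. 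This yields $Q(T) \leq L - 1 - \lambda T - \Delta + \frac{Q^{>L}(0)}{L}\, B(L,T)$, and the final strict inequality follows from $B(L,T) < L$ (valid since $\P\{G = 0\} > 0$). I expect this monotonicity comparison --- together with the easy-to-misorient sign check on $1 - B(L,T)/L$ that licenses using the hypothesis on $Q(0)$ --- to be the only genuine obstacle; the rest is already packaged in Lemma~\ref{basic2}.
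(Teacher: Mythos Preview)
Your proposal is correct and follows essentially the same route as the paper's proof: both apply Lemma~\ref{basic2} with $K=0$, factor out $Q(0)\bigl(1-\tfrac{1}{L}B(L,T)\bigr)$, invoke $B(L,T)\leq L$ to make that coefficient nonnegative, substitute the hypothesis $Q(0)\leq L-1-\lambda T$, and then pass from $\Delta_L$ to $\Delta=\Delta_{L^*}$ via monotonicity of $\sigma(\cdot;\lambda,T)$. Your justification of the monotonicity step (checking $L^*>\lambda T+1$ so both factors are nonnegative and nondecreasing on $[L^*,\infty)$) is slightly more explicit than the paper's, but the argument is the same.
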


We now present Lemmas~\ref{auxiliary1} and~\ref{auxiliary2},
which use Corollaries~\ref{corollary1} and~\ref{corollary2}
and Lemmas~\ref{basic1}, \ref{basic2} and~\ref{auxiliary} to show that
under certain conditions, the total queue mass as well as the mass
above level~$L$ strictly decrease.

\begin{lemma}

\label{auxiliary1}

If $L$ is large enough, $L - 1 - \lambda T \leq Q(0) \leq L - \lambda T$
and $Q^{> L}(0) < \Delta$, then
\begin{itemize}
\item $Q(T) < Q(0) - D$, where $D > 0$, so $Q$ is strictly
smaller at the next update,
\item $Q^{> L}(T) \leq Q^{> L}(0)$, so $Q^{> L}$ remains strictly smaller
than $\Delta$.
\end{itemize}

\end{lemma}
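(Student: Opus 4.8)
The plan is to prove the two bullets separately; each follows almost immediately from a result already established, because the hypotheses on $Q(0)$ and $Q^{>L}(0)$ are calibrated exactly so that Corollary~\ref{corollary1}, Lemma~\ref{auxiliary} and Lemma~\ref{basic1} apply directly. Throughout I would write $T = 1/\delta$ for the length of one update interval and work with the fluid trajectory on $[0,T]$, and I would recall that $Q(\cdot)$ is continuous across the update at time~$T$ (the update resets queue \emph{estimates}, not queue \emph{lengths}, so the $v_i$ and hence $Q$ do not jump), so that $Q(T)$ legitimately denotes the total queue mass at the next update. I take $L = L^{*} = s(\lambda,T)$, for which $\Delta$ in~\eqref{eq:delta} is positive by the very definition of $s(\lambda,T)$.

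For the first bullet, I would invoke Corollary~\ref{corollary1}, which applies since $L$ is large enough and $Q(0) \ge L - 1 - \lambda T$, to obtain $Q(T) \le Q(0) - \Delta + \frac{1}{L} Q^{>L}(0)\,B(L,T)$. Since $B(L,T) \le L$, the correction term is at most $Q^{>L}(0)$, which by hypothesis is strictly below~$\Delta$; hence setting $D := \Delta - \frac{1}{L} Q^{>L}(0)\,B(L,T) > 0$ gives $Q(T) \le Q(0) - D < Q(0)$, i.e.\ the total queue mass is strictly smaller at the next update.

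For the second bullet, the hypothesis $Q(0) \le L - \lambda T$ is precisely the condition in Lemma~\ref{auxiliary} with $K = L$ and $t = T$, so $m(s) \le L-1$ for all $s \in [0,T)$. Feeding this into Lemma~\ref{basic1} with $K = L$ yields $Q^{>L}(T) \le Q^{>L}(0)$, and combining with $Q^{>L}(0) < \Delta$ gives $Q^{>L}(T) < \Delta$, so $Q^{>L}$ stays strictly below~$\Delta$.

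I do not expect a genuine obstacle: the real work sits in Lemma~\ref{lem:fund} and in deriving Corollaries~\ref{corollary1} and~\ref{corollary2}, and this lemma is essentially a bookkeeping combination of those with the monotonicity statements Lemmas~\ref{basic1} and~\ref{auxiliary}. The only point deserving a moment's care is that $D$ can be taken \emph{strictly} positive, which I get from the strict inequality $Q^{>L}(0) < \Delta$ together with $B(L,T) \le L$; and the consistency of the $\Delta$ in~\eqref{eq:delta} with the $\Delta$ appearing in Corollary~\ref{corollary1} is ensured by the choice $L = L^{*}$.
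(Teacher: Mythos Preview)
Your approach is essentially identical to the paper's: the first bullet via Corollary~\ref{corollary1}, the second via Lemmas~\ref{auxiliary} and~\ref{basic1}. The only cosmetic difference is the choice of $D$: the paper sets $D = (\Delta - Q^{>L}(0))/2$, which, via the \emph{strict} second inequality in Corollary~\ref{corollary1}, delivers the strict $Q(T) < Q(0) - D$ claimed in the lemma, whereas your choice $D = \Delta - \tfrac{1}{L}Q^{>L}(0)B(L,T)$ only gives $Q(T) \le Q(0) - D$; halving your $D$ fixes this immediately.
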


\begin{proof}
The first statement follows from Corollary~\ref{corollary1}, with $D=\Delta/2 - Q^{>L}(0) / 2$.
The second assertion follows from Lemmas~\ref{basic1} and~\ref{auxiliary}.
\end{proof}

\begin{lemma}

\label{auxiliary2}

If $L$ is large enough, $Q(0) \leq L - 1 - \lambda T$
and $Q^{> L}(0) < \Delta$ then
\begin{itemize}
\item $Q(T) < L - 1 - \lambda T$, so $Q$ remains strictly smaller than
$L - 1 - \lambda T$ at the next update,
\item $Q^{> L}(T) \leq Q^{>L}(0)$, so $Q^{> L}$ remains strictly smaller
than $\Delta$,
\item $Q^{> L - 1}(T) \leq c Q^{> L-1}(0)$ if $Q^{>L-1}(0)\geq \Delta$, where $c < 1$,
so $Q^{> L-1}$ is strictly decreasing by a constant factor over each
update interval.
\end{itemize}

\end{lemma}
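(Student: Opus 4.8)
The plan is to verify each of the three bullet points of Lemma~\ref{auxiliary2} in turn, treating the hypotheses $L$ large enough, $Q(0)\le L-1-\lambda T$ and $Q^{>L}(0)<\Delta$ as standing assumptions throughout.

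\textbf{First bullet.} I would apply Corollary~\ref{corollary2}, whose hypotheses are exactly met, to get $Q(T)\le L-1-\lambda T-\Delta+\frac{Q^{>L}(0)}{L}B(L,T)$. Using $B(L,T)\le L$ and then $Q^{>L}(0)<\Delta$, this is strictly smaller than $L-1-\lambda T-\Delta+\Delta=L-1-\lambda T$. So $Q(T)<L-1-\lambda T$, which is the claim; note the strictness comes from the strict inequality $Q^{>L}(0)<\Delta$ (or equivalently the second, strict form in Corollary~\ref{corollary2}).

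\textbf{Second bullet.} Here the goal is $Q^{>L}(T)\le Q^{>L}(0)$. The natural route is Lemma~\ref{basic1} with $K=L$: it suffices to show $m(s)\le L-1$ for all $s\in[0,T)$. By Lemma~\ref{auxiliary}, this holds provided $Q(0)\le L-\lambda T$, which is implied by the stronger standing assumption $Q(0)\le L-1-\lambda T$. Hence $m(s)\le L-1$ on $[0,T)$ and Lemma~\ref{basic1} gives $Q^{>L}(T)\le Q^{>L}(0)<\Delta$, as stated.

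\textbf{Third bullet.} Suppose in addition $Q^{>L-1}(0)\ge\Delta$. I want $Q^{>L-1}(T)\le c\,Q^{>L-1}(0)$ for some fixed $c<1$. The idea is to use the ``no arrivals above level $K$'' branch of Lemma~\ref{basic2} with $K=L-1$: since (as just argued) $m(s)\le L-1$ on $[0,T)$, in particular $m(s)\le (L-1)-1+1=L-1$ but we actually need $m(s)\le (L-1)-1=L-2$ for that branch, so I must be careful. A cleaner approach is the $L=1$ specialization displayed at the end of Lemma~\ref{basic2}: when $m(s)\le K-1$ on $[0,t)$ one has $Q^{>K}(t)\le e^{-t}Q^{>K}(0)+[1-e^{-t}]Q^{>K+1}(0)$. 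Taking $K=L-1$ requires $m(s)\le L-2$; if that holds on $[0,T)$ we get $Q^{>L-1}(T)\le e^{-T}Q^{>L-1}(0)+(1-e^{-T})Q^{>L}(0)$, and since $Q^{>L}(0)<\Delta\le Q^{>L-1}(0)$ this is $\le e^{-T}Q^{>L-1}(0)+(1-e^{-T})Q^{>L-1}(0)$, which is not yet a contraction. To get a genuine factor $c<1$ I instead bound $Q^{>L}(0)<\Delta\le Q^{>L-1}(0)$ more sharply — using $Q^{>L}(0)\le\beta Q^{>L-1}(0)$ for a $\beta<1$ whenever $Q^{>L-1}(0)\ge\Delta>Q^{>L}(0)$ only gives $\beta$ depending on the ratio, which is not bounded away from $1$ uniformly. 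The resolution, and the step I expect to be the real obstacle, is to observe that when $Q^{>L-1}(0)\ge\Delta$ the excess mass $z_L(0)=Q^{>L-1}(0)-Q^{>L}(0)\ge\Delta-Q^{>L}(0)>0$ sits at level exactly $L$, i.e.\ $Q^{\le L}(0)-Q^{\le L-1}(0)=z_L(0)$ is bounded below by a positive constant, so applying Lemma~\ref{lem:fund} (or the first inequality of Lemma~\ref{basic2}) with $K=L-1$ forces $\int_0^T z_L(s)\,ds$ to exceed a fixed positive amount, producing an absolute (not merely proportional) decrease $Q^{>L-1}(T)\le Q^{>L-1}(0)-c_0$ with $c_0>0$; combined with the a priori bound $Q^{>L-1}(0)\le Q(0)\le L-1-\lambda T$ this absolute decrement converts into a multiplicative factor $c=1-c_0/(L-1-\lambda T)<1$ valid on the relevant range $\Delta\le Q^{>L-1}(0)\le L-1-\lambda T$. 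I would make this conversion precise, checking that $c_0$ depends only on $\lambda$, $T$ and $L$ (through $\Delta$ and $B(L,T)$) and not on the particular state, which is what makes $c$ a genuine constant. The first two bullets are essentially immediate from the cited corollary and lemmas; the entire weight of the proof is in extracting a uniform contraction factor in the third bullet, and that is where the argument must be spelled out carefully.
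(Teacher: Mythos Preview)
Your first two bullets are correct and match the paper exactly: Corollary~\ref{corollary2} for the first, and Lemma~\ref{basic1} together with Lemma~\ref{auxiliary} (applied with $K=L$) for the second.

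For the third bullet you were on the right track with the convex-combination bound from the end of Lemma~\ref{basic2}, but you abandoned it for two reasons that are both unfounded. First, the condition $m(s)\le L-2$ on $[0,T)$ \emph{does} hold: apply Lemma~\ref{auxiliary} with $K=L-1$, whose hypothesis $Q(0)\le (L-1)-\lambda T$ is precisely the standing assumption of this lemma (this is why the hypothesis here is $Q(0)\le L-1-\lambda T$ rather than merely $Q(0)\le L-\lambda T$). Second, the bound
\[
Q^{>L-1}(T)\le \ee^{-T}Q^{>L-1}(0)+(1-\ee^{-T})Q^{>L}(0)
\]
\emph{is} a contraction once you divide by $Q^{>L-1}(0)$ and use the additional hypothesis $Q^{>L-1}(0)\ge\Delta$ in the \emph{denominator} rather than bounding the numerator up:
\[
\frac{Q^{>L-1}(T)}{Q^{>L-1}(0)}\le \ee^{-T}+(1-\ee^{-T})\frac{Q^{>L}(0)}{Q^{>L-1}(0)}\le \ee^{-T}+(1-\ee^{-T})\frac{Q^{>L}(0)}{\Delta}=:c<1.
\]
That is the paper's proof. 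The factor $c$ depends on $Q^{>L}(0)$, but since the second bullet guarantees $Q^{>L}$ is nonincreasing, $c$ does not increase over successive intervals, which is all that is needed downstream in Lemma~\ref{lem:maxave}. Your detour through an absolute decrement $c_0$ is unnecessary, and as you set it up ($c_0$ proportional to $\Delta-Q^{>L}(0)$) it would suffer the very state-dependence you were trying to avoid.
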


\begin{proof}
The first statement follows from Corollary~\ref{corollary2}.
The second assertion follows from Lemmas~\ref{basic1} and~\ref{auxiliary}.
The third statement holds for
\[
c = \ee^{-T} + (1-\ee^{-T}) \frac{Q^{>L}(0)}{\Delta} < 1
\]
since the final portion of Lemma \ref{basic2} in conjunction with Lemma \ref{auxiliary} gives 
\[
\begin{split}
\frac{Q^{>L-1}(T)}{Q^{>L-1}(0)} &\leq \ee^{-T} + (1-\ee^{-T}) \frac{Q^{>L}(0)}{Q^{>L-1}(0)} \\
&\leq \ee^{-T} + (1-\ee^{-T}) \frac{Q^{>L}(0)}{\Delta}.
\end{split}
\]

\end{proof}

\begin{lemma}
\label{lem:maxave}
If $L$ is large enough, $Q(0) \leq L - \lambda T$
and $Q^{> L}(0) < \Delta$, then there exists
a finite time $\tau_{L}^*$ such that
$Q(\tau_{L}^*) \leq L - 1 - \lambda T$
and $Q^{> L - 1}(\tau_{L}^*) < \Delta$ (as defined in \eqref{eq:delta}).
\end{lemma}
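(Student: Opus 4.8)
The plan is to iterate Lemmas~\ref{auxiliary1} and~\ref{auxiliary2} over successive update intervals of length $T = 1/\delta$, starting from the given state $y(0)$ with $Q(0) \le L - \lambda T$ and $Q^{>L}(0) < \Delta$, and to show that after finitely many updates the average queue mass drops below $L - 1 - \lambda T$ while the tail mass above level $L-1$ drops below $\Delta$. First I would observe that the hypotheses of Lemma~\ref{lem:maxave} split into two cases at $t=0$: either $Q(0) \le L - 1 - \lambda T$ already (in which case we are in the regime of Lemma~\ref{auxiliary2}), or $L - 1 - \lambda T < Q(0) \le L - \lambda T$ (the regime of Lemma~\ref{auxiliary1}). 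In the first case the first two bullets of Lemma~\ref{auxiliary2} immediately give that at \emph{every} subsequent update the inequalities $Q \le L - 1 - \lambda T$ and $Q^{>L} < \Delta$ persist, so we stay in that regime forever. In the second case, Lemma~\ref{auxiliary1} guarantees that $Q$ strictly decreases by at least $D = \Delta/2 - Q^{>L}(0)/2 > 0$ over one update interval while $Q^{>L}$ stays below $\Delta$; hence after at most $\lceil 1/D \rceil$ update intervals (a crude bound, since $Q$ cannot go negative and the decrement $D$ need not be uniform — see the obstacle below) we reach a state with $Q \le L - 1 - \lambda T$, at which point we fall into the first case and stay there.

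Once we are permanently in the regime $Q \le L - 1 - \lambda T$, $Q^{>L} < \Delta$, I would invoke the third bullet of Lemma~\ref{auxiliary2}: as long as $Q^{>L-1}(0) \ge \Delta$, one update interval multiplies $Q^{>L-1}$ by a factor $c = \ee^{-T} + (1-\ee^{-T}) Q^{>L}(0)/\Delta < 1$. The subtlety is that $c$ depends on $Q^{>L}$, which could in principle approach $\Delta$; but since $Q^{>L} \le Q^{>L-1}$ trivially and $Q^{>L-1}$ is being driven down, and more to the point $Q^{>L}$ itself is nonincreasing across updates (second bullet of Lemma~\ref{auxiliary2}), we have $Q^{>L}(\text{any later update}) \le Q^{>L}(0) =: q_0 < \Delta$, so $c$ is bounded above by the fixed constant $c_0 = \ee^{-T} + (1-\ee^{-T}) q_0/\Delta < 1$ throughout. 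Therefore $Q^{>L-1}$ at the $n$-th such update is at most $\max\{\Delta, c_0^n Q^{>L-1}(0)\}$ wait — more carefully, once it dips below $\Delta$ it stays below $\Delta$ because $Q^{>L-1}(T) \le Q^{>L-1}(0)$ is implied by the same contraction bound (the factor is $<1$) — so after at most $n^* = \lceil \log(\Delta / Q^{>L-1}(0)) / \log c_0 \rceil$ further update intervals (and $n^* = 0$ if $Q^{>L-1}(0)$ is already below $\Delta$) we obtain $Q^{>L-1} < \Delta$ while still $Q \le L-1-\lambda T$. Setting $\tau_L^*$ equal to the total elapsed time — at most $(\lceil 1/D\rceil + n^*)\,T$ — finishes the argument.

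The main obstacle I anticipate is making the "finitely many steps" counting in the second case genuinely rigorous: Lemma~\ref{auxiliary1} only guarantees $Q(T) < Q(0) - D$ with $D = \Delta/2 - Q^{>L}(0)/2$, and across successive updates $Q^{>L}$ only \emph{weakly} decreases, so a priori the per-step decrement $D_k = \Delta/2 - Q^{>L}(kT)/2$ stays bounded below by the \emph{fixed} positive constant $\Delta/2 - q_0/2$ (using $Q^{>L}(kT) \le q_0 < \Delta$). That uniform lower bound on the decrement is exactly what makes the number of steps to exit the strip $(L-1-\lambda T, L-\lambda T]$ finite; I would state this explicitly rather than leaning on the vaguer "strictly decreasing" phrasing. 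A secondary technical point is confirming that once $Q$ enters $\{Q \le L-1-\lambda T\}$ it cannot leave: this is the first bullet of Lemma~\ref{auxiliary2}, which is conditional on $Q(0)\le L-1-\lambda T$ \emph{and} $Q^{>L}(0)<\Delta$ at the start of that interval — both of which are maintained inductively by the first two bullets — so the induction closes. No continuity-in-$t$ issues arise because everything is evaluated at the discrete update epochs $0, T, 2T, \dots$, and between updates the dynamics are irrelevant to the statement.
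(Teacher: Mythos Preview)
Your proposal is correct and follows essentially the same route as the paper: iterate Lemma~\ref{auxiliary1} until $Q$ drops below $L-1-\lambda T$, then iterate Lemma~\ref{auxiliary2} until $Q^{>L-1}$ drops below $\Delta$. Your treatment is in fact slightly more careful than the paper's own proof---you make explicit the uniform lower bound $D_k \ge \Delta/2 - q_0/2$ on the per-step decrement (the paper just remarks that ``$D$ does not decrease after any iteration since $Q^{>L}(0)$ can only decrease'') and the uniform upper bound $c \le c_0 < 1$ on the contraction factor, and you split off the case $Q(0)\le L-1-\lambda T$ at the outset, whereas the paper invokes Lemma~\ref{auxiliary1} directly even though its lower hypothesis $L-1-\lambda T \le Q(0)$ is not guaranteed. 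One small cleanup: the digression about $Q^{>L-1}$ \emph{staying} below $\Delta$ once it dips is unnecessary for the statement (you only need a single time $\tau_L^*$), and the justification you give for it appeals to the contraction bound whose hypothesis $Q^{>L-1}\ge\Delta$ has just been violated---simply drop that sentence.
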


\begin{proof}
The proof is constructed by applying Lemmas~\ref{auxiliary1}
and~\ref{auxiliary2} in succession.
Since $Q(0) \leq L - \lambda T$ and $Q^{> L}(0) < \Delta$,
Lemma~\ref{auxiliary1} can be applied, so that $Q$ is strictly
decreasing and eventually becomes smaller than $L - 1 - \lambda T$ while $Q^{>L}$ remains smaller than $\Delta$. Note that $D$ does not decrease after any iteration since $Q^{>L}(0)$ can only decrease.
At that moment, Lemma~\ref{auxiliary2} can be applied which shows that
$Q^{> L - 1}$ decreases by a constant factor over each update interval
as long as $Q^{> L - 1}$ is larger than $\Delta$. The constant factor $c$ does not increase after any iteration, and in fact only becomes smaller as $Q^{>L}(0)$ decreases.
In other words, $Q^{> L - 1}$ becomes smaller than $\Delta$
after finitely many updates, while $Q$ remains smaller than $L-1-\lambda T$ and $Q^{>L}$ smaller than $\Delta$.
\end{proof}

\textit{Proof of Proposition~\ref{prop:sync2}.}
Observe that the procedure in the proof of Lemma~\ref{lem:maxave}
can be performed as long as $L$ is large enough.
The left-hand side of~(\ref{eq:cond}) is increasing in~$L$
(as both factors are increasing in~$L$),
%\[\left(\frac{L-\lambda T}{L+1}\right)\left[\sum_{l = 0}^{L+1} l \ee^{- T} \frac{T^l}{l!} +(L+1) \sum_{l = L+2}^{\infty} \ee^{- T} \frac{T^l}{l!}\right] > \left(\frac{L-1-\lambda T}{L}\right)\left[\sum_{l = 0}^{L} l \ee^{- T} \frac{T^l}{l!} +L \sum_{l = L+1}^{\infty} \ee^{- T} \frac{T^l}{l!}\right],\]
which shows that the condition~(\ref{eq:cond}) becomes tighter
for smaller values of~$L$.
In fact, the mass above level~$L^*$ will vanish,
where $L^*$ is the lowest value of~$L$ which is sufficiently large
for (\ref{eq:cond}) to hold, yielding the first statement
of Proposition~\ref{prop:sync2}.  The latter part follows directly from Lemma \ref{auxiliary1}.

Finally, we stress that the lemma can also be applied when the maximum
initial queue length is infinite, but the mass $Q(0) = \sum_{i=0}^\infty z_i(0) < \infty$ is finite.
In that case, one can find a value for $\bar{L}$ such that
$Q^{>\bar{L}}(0) = \sum_{i=\bar{L}}^\infty z_i(0) < \Delta$ (as the tail of a convergent series tends to zero) and $Q(0) \leq \bar{L} - \lambda T$.
In that case, the lemma can be successively applied,
starting from $\bar{L}$, which proves Proposition~\ref{prop:sync2}.\\

We now proceed to state the second main result in this section.
Denote by $y^*$ the fluid state with $y_{0,0}^* = 1 - \lambda$,
$y_{0,1}^* = 0$, $y_{1,1}^* = \lambda$, and $y_{i,j}^* = 0$ for all
$j \geq i \geq 2$.

\begin{proposition}[No-queueing threshold for $\d$ in $\sujsqd$]
\label{prop:sync1}
Suppose $\delta > \lambda / (1 - \lambda)$, or equivalently,
$T = 1 / \delta < (1 - \lambda) / \lambda = 1 /\lambda - 1$.
Then $y^*$ is a fixed point of the fluid-limit process at update
moments in the following sense:
\begin{enumerate}
\item[(a)] If $y(0) = y^*$, then $y(k T) = y^*$ for all $k \geq 0$;
\item[(b)] For any initial state $y(0)$ with $Q(0) < \infty$,
$y(k T) \to y^*$ as $k \to \infty$.
\end{enumerate}
Moreover, in case~(a), $y_{0,0}(k T + t) = 1 - \lambda - \lambda t$,
$y_{0,1}(k T + t) = \lambda t$,
$y_{1,1}(k T + t) = \lambda$ for all $k \geq 0$, $t \in [0, T)$,
and $y_{i,j}(t) = 0$ for all $j \geq i \geq 2$, $t \geq 0$.\\
In case~(b), $y_{0,0}(k T + t) \to 1 - \lambda - \lambda t$,
$y_{0,1}(k T + t) \to \lambda t$,
$y_{1,1}(k T + t) \to \lambda$ for all $k \geq 0$, $t \in [0, T)$,
and $y_{i,j}(t) \to 0$ for all $j \geq i \geq 2$, $t \geq 0$.
\end{proposition}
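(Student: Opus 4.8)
The plan is to first verify part~(a) by a direct computation of the fluid-limit ODEs~\eqref{eq:fl} started from $y^*$, and then to derive part~(b) from Proposition~\ref{prop:sync2} together with a ``convergence to the boundary'' argument on the queue mass.

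For part~(a), I would start from $y(0) = y^*$, so that only $y_{0,0}$ and $y_{1,1}$ are nonzero, with $w_0(0) = y_{0,0}^* = 1 - \lambda > 0$ and $m(0) = 0$. Since $p_{0,0}(t) = y_{0,0}(t) / w_0(t) = 1$ as long as $w_0(t) > 0$ and $m(t) = 0$, and all $p_{i,j}$ with $j \geq 1$ vanish, \eqref{eq:fl} reduces to $dy_{0,0}/dt = -\lambda$, $dy_{0,1}/dt = \lambda$, $dy_{1,1}/dt = 0$, with all other coordinates staying at zero (one checks that the source terms for $y_{2,2}$, $y_{1,2}$, etc. involve $p_{1,1}$ or $p_{i,j}$ with $j \geq 1$, which are zero). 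This gives $y_{0,0}(t) = 1 - \lambda - \lambda t$, $y_{0,1}(t) = \lambda t$, $y_{1,1}(t) = \lambda$ on the interval where $w_0(t) > 0$. The condition $T < (1-\lambda)/\lambda$ is exactly what guarantees $y_{0,0}(t) = 1 - \lambda - \lambda t > 0$ for all $t \in [0, T)$, so $m(t) = 0$ throughout $[0,T)$ and the reduction is self-consistent. At the update moment $T$ we apply the jump map: $v_0(T^-) = y_{0,0}(T^-) + y_{0,1}(T^-) = 1 - \lambda$ and $v_1(T^-) = y_{1,1}(T^-) = \lambda$, so $y_{0,0}(T) = 1 - \lambda$, $y_{1,1}(T) = \lambda$, and all other coordinates zero --- i.e.\ $y(T) = y^*$. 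Induction over update intervals gives $y(kT) = y^*$ for all $k$, together with the stated explicit trajectory on each interval $[kT, (k+1)T)$.

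For part~(b), the idea is that Proposition~\ref{prop:sync2} already forces the mass strictly above level~$1$ to vanish; the remaining work is to show that $Q(kT) \to \lambda T$ (the value consistent with $y^*$, since at $y^*$ we have $Q = \lambda$... actually $Q(kT^+)$ just after an update at $y^*$ is $\lambda$ --- I would track $Q$ at update moments and show it converges to the fixed-point value), and that the mass concentrates on queue lengths $0$ and $1$ in the right way. Concretely: since $T < (1-\lambda)/\lambda$, i.e.\ $\lambda T < 1 - \lambda T$, one checks that $s(\lambda, T) = 1$ is ``large enough'' in the sense of~\eqref{eq:cond} --- this is the key numerical point: $\sigma(1; \lambda, T) = (1 - \lambda T - 1 + \cdots)$; more carefully, with $L = 1$, $B(1,T) = 1 - \ee^{-T}$ and the condition $\lambda T < (1 - (\lambda T + 1)/1)(1 - \ee^{-T})$ fails for $L=1$, so one must instead argue directly that when $Q(0)$ is small the mass above level~1 decays via the $L=1$ case of Lemma~\ref{basic2} (the bound $Q^{>K}(t) \leq \ee^{-t} Q^{>K}(0) + (1-\ee^{-t})Q^{>K+1}(0)$ with $K = 1$, $Q^{>2} = 0$ eventually), giving geometric decay $Q^{>1}(kT) \to 0$. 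Once $Q^{>1}(kT) \to 0$, all mass sits at levels $0$ and $1$, so the state at update moments is asymptotically parametrized by $v_1(kT) = Q(kT)$ alone; then~\eqref{eq:dyn} over one interval (with $m(t) = 0$ throughout, which holds once $Q(kT) \leq 1 - \lambda T$ by Lemma~\ref{auxiliary}) gives a contraction $Q((k+1)T) = Q(kT) + \lambda T - \int_0^T (1 - v_0(t))\,dt$, and one shows the fixed point of this recursion is the value attained at $y^*$, with convergence. Finally, convergence of $Q(kT)$ and of $Q^{>1}(kT)$ to their $y^*$-values, plus the explicit ODE solution on each interval with $m \equiv 0$, upgrades to coordinate-wise convergence $y_{0,0}(kT + t) \to 1 - \lambda - \lambda t$, etc.

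The main obstacle I expect is part~(b): cleanly combining the tail-vanishing conclusion of Proposition~\ref{prop:sync2} (which is stated at the level ``the fraction above $s(\lambda,T)$ vanishes'') with a quantitative contraction for the \emph{low} part of the mass, and in particular handling the subtlety that $s(\lambda, T)$ need not equal~$1$ even though the fixed point lives on levels $\{0,1\}$ --- so an extra argument is needed to push the mass down from level $s(\lambda,T)$ all the way to level~$1$. The transient behavior before $Q(kT)$ enters the region $Q \leq 1 - \lambda T$ (where Lemma~\ref{auxiliary} guarantees $m \equiv 0$ and the nice ODE reduction kicks in) also needs care; I would use Corollaries~\ref{corollary1} and~\ref{corollary2} and the monotone-decrease Lemmas~\ref{auxiliary1}, \ref{auxiliary2}, \ref{lem:maxave} exactly as in the proof of Proposition~\ref{prop:sync2} to reach that region in finitely many steps, and only then run the contraction argument above.
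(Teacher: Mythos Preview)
Your treatment of part~(a) is correct and essentially identical to the paper's: solve the reduced ODE on $[0,T)$, check that $T<(1-\lambda)/\lambda$ keeps $y_{0,0}(t)>0$ so $m(t)\equiv 0$, apply the update map, and induct.

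For part~(b), however, your plan has a genuine gap that you yourself flag but do not close. The machinery of Proposition~\ref{prop:sync2} (Lemmas~\ref{auxiliary1}, \ref{auxiliary2}, \ref{lem:maxave}, Corollaries~\ref{corollary1}, \ref{corollary2}) iterates the level~$L$ down only as far as $L^*=s(\lambda,T)$, because below that the ``large enough'' condition~\eqref{eq:cond} fails. One checks that $\sigma(1;\lambda,T)=-\lambda T(1-\ee^{-T})<0$, so $L^*\geq 2$ always; and for $\lambda>1/2$ with $T$ small one has $L^*>2$. Hence Proposition~\ref{prop:sync2} alone does not give $Q^{>1}\to 0$, nor does it bring $Q(kT)$ down to the threshold $1-\lambda T$ you need in order to invoke Lemma~\ref{auxiliary} with $K=1$ and trigger the geometric decay from Lemma~\ref{basic2}. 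Your contraction argument via~\eqref{eq:dyn} is also incomplete as stated: the integral $\int_0^T[1-v_0(t)]\,dt$ depends on the full state, not just on $Q(kT)$, so you cannot treat it as a one-dimensional recursion without first establishing that $z_2(kT)\to 0$.

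The paper closes this gap with a different tool, Lemma~\ref{lem:bounds}, which gives two-sided bounds on $\int_0^T[1-v_0(t)]\,dt$ and on $Q(T)$ in terms of $z_1(0)$ and $z_2(0)$; the refinement over Lemma~\ref{lem:fund} is that it also accounts for service completions of jobs that \emph{arrive} during the interval. With this lemma the paper runs a dichotomy: either $v_0(k_0T)\leq 1-\lambda$ for some $k_0$, in which case statement~(i) of Lemma~\ref{lem:bounds} traps $v_0(kT)\leq 1-\lambda$ forever and statement~(v) forces $v_0(kT)\to 1-\lambda$ and $z_2(kT)\to 0$; or $v_0(kT)>1-\lambda$ for all $k$, in which case $m(t)\equiv 0$ automatically, Lemma~\ref{basic2} with $K=L=1$ gives $z_2(kT)\to 0$, and statement~(iii) of Lemma~\ref{lem:bounds} forces $v_0(kT)\downarrow 1-\lambda$. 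This dichotomy on $v_0$, together with the refined departure bound, is the missing ingredient in your proposal; Proposition~\ref{prop:sync2} is not invoked at all in the paper's proof of part~(b).
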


Loosely speaking, Proposition~\ref{prop:sync1} implies that for $\d \geq \lambda/(1-\lambda)$, in the long term the fraction of jobs that incur a non-zero waiting time vanishes.
We note that in this regime, jobs are only sent to idle servers, which means that servers only need to send feedback whenever they are idle at an update moment. Since the fraction of idle servers in the fixed point is $1-\lambda$, a sparsified version of $\sujsqd$ will have a communication overhead of $\lambda$ per time unit or 1 message per job.

The next lemma, whose proof is presented in Appendix~\ref{app:lembounds},
provides lower and upper bounds for the number of service completions
on fluid level and the total queue mass, which play an instrumental
role in the proof of Proposition~\ref{prop:sync1}.
The bounds and proof arguments are similar in spirit to those
of Lemma~\ref{lem:fund} with $K = 0$, but involve crucial refinements
by additionally accounting for service completions of arriving jobs.

\begin{lemma}

\label{lem:bounds}

If $v_0(0) \leq 1 - \lambda$, then (i) $v_0(T) \leq 1 - \lambda$. 

If $v_0(0) \geq \lambda T$, then $\int_0^T [1 -v_0(t)] dt$ is bounded
from below by
\[
\begin{split}
& \lambda [T - 1 + \ee^{- T}] +
z_1(0) [1 - \ee^{- T}] + z_2(0) [1 - \ee^{- T} - T \ee^{- T}] \\
=
& \lambda T + [z_1(0) - \lambda] [1 - \ee^{- T}] +
z_2(0) [1 - \ee^{- T} - T \ee^{- T}],
\end{split}
\]
so that in view of~\eqref{eq:dyn} (ii)
\[
Q(T) \leq Q(0) - [z_1(0) - \lambda] [1 - \ee^{- T}] -
z_2(0) [1 - \ee^{- T} - T \ee^{- T}],
\]
and
\[
Q(T) \geq Q(0) - [z_1(0) - \lambda] [1 - \ee^{- T}] - z_2(0) T,
\]
so that in view of~\eqref{eq:dyn} (iii),
\[
\begin{split}
\int_0^T [1 - v_0(t)] dt &\leq \lambda [T - 1 + \ee^{- T}] +
z_1(0) [1 - \ee^{- T}] + z_2(0) T \\
&=
\lambda T + [z_1(0) - \lambda] [1 - \ee^{- T}] + z_2(0) T.
\end{split}
\]

If $v_0(0) < \lambda T$, then $\int_0^T [1 - v_0(t)] dt$ is bounded
from below by
\[
\begin{split}
& \lambda [T - 1 + \ee^{- T}] + \hat{z}_1(0) [1 - \ee^{- T}] +
\hat{z}_2(0) [1 - \ee^{- T} - T \ee^{- T}] \\
=
& \lambda T + [\hat{z}_1(0) - \lambda] [1 - \ee^{- T}] +
\hat{z}_2(0) [1 - \ee^{- T} - T \ee^{- T}],
\end{split}
\]
so that in view of~\eqref{eq:dyn} (iv)
\[
Q(T) \leq
Q(0) - [\hat{z}_1(0) - \lambda] [1 - \ee^{- T}] -
\hat{z}_2(0) [1 - \ee^{- T} - T \ee^{- T}],
\]
with $\hat{z}_i(0) = \min\{z_i(0), 1 - \lambda T\}$, $i = 1, 2$.

We deduce that (v)
\[
\begin{split}
Q(T) \leq Q(0) &- \min\{1 - v_0(0) - \lambda, 1-\lambda T - \lambda\} [1 - \ee^{- T}] \\
&- \min\{z_2(0), 1 - \lambda T\} [1 - \ee^{- T} - T \ee^{- T}],
\end{split}
\]
with $1 - \lambda T - \lambda > 0$.

\end{lemma}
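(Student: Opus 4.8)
The argument turns on the observation that, as long as the minimum queue estimate $m(t)$ equals $0$, the routing and the queue-mass evolution close up into an elementary linear system. The plan is to isolate that regime, read off the sharp bounds from it, and reduce the remaining case to it by clipping the initial profile. Throughout, ``time $0$'' is the start of an inter-update interval, so $y_{i,j}(0)=0$ for $i<j$ and hence $w_0(0)=v_0(0)$.

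\textbf{The $m(t)=0$ engine.} By~\eqref{eq:syncdw}, $\frac{d}{dt}w_0(t)=-\lambda$ while $m(t)=0$, and estimates only increase between updates, so $m(t)=0$ exactly for $t<\tau:=v_0(0)/\lambda$ and $m(t)\geq1$ afterwards. On $[0,\tau)$ every arrival is routed to an idle server: writing $a_k(t):=\sum_j p_{k-1,j}(t)=y_{k-1,m(t)}(t)/w_{m(t)}(t)$, one has $a_1(t)=1$ and $a_k(t)=0$ for $k\geq2$ (since $y_{k-1,0}=0$ when $k-1\geq1$). Hence, on $[0,\tau)$, the expression for $\frac{d}{dt}z_k(t)$ obtained just before~\eqref{derivativequeuemass1} reduces to $\frac{d}{dt}z_k(t)=-v_k(t)\leq0$ for $k\geq2$ (so $z_2(0)\ee^{-t}\leq z_2(t)\leq z_2(0)$, using $v_k\leq z_k$), and to $\frac{d}{dt}z_1(t)=\lambda-v_1(t)=\lambda-z_1(t)+z_2(t)$. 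Multiplying the last equation by $\ee^{t}$ and inserting the two-sided bound on $z_2$ yields, for $t\in[0,\tau)$,
\[
z_1(0)\ee^{-t}+\lambda(1-\ee^{-t})+z_2(0)\,t\ee^{-t}\ \leq\ z_1(t)\ \leq\ z_1(0)\ee^{-t}+(\lambda+z_2(0))(1-\ee^{-t}).
\]

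\textbf{Case $v_0(0)\geq\lambda T$, and statement~(i).} Here $\tau\geq T$, so the sandwich holds on all of $[0,T]$. Using $\int_0^T\ee^{-s}ds=1-\ee^{-T}$, $\int_0^T(1-\ee^{-s})ds=T-1+\ee^{-T}$ and $\int_0^T s\ee^{-s}ds=1-\ee^{-T}-T\ee^{-T}$, integrating the lower bound over $[0,T]$ produces exactly the claimed lower bound for $\int_0^T[1-v_0(t)]dt$, whence~\eqref{eq:dyn} gives the upper bound on $Q(T)$ in~(ii); integrating the upper bound (relaxing $z_2(0)(T-1+\ee^{-T})$ to $z_2(0)T$) gives the upper bound on $\int_0^T[1-v_0(t)]dt$ in~(iii), and~\eqref{eq:dyn} converts it to the lower bound on $Q(T)$ in~(ii). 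For~(i), $z_1(T)\geq\lambda+(z_1(0)-\lambda)\ee^{-T}\geq\lambda$ because $z_1(0)=1-v_0(0)\geq\lambda$; and, more generally, any post-update profile with $v_0(0)\leq1-\lambda$ dominates $y^*$ coordinatewise in $(z_k(0))_k$, while the engine applied to $y^*$ (where $z_1^*(0)=\lambda$, $z_2^*(0)=0$ collapse the sandwich) gives $v_0^{y^*}(t)\equiv1-\lambda$ on $[0,T]$, so comparison with $y^*$ delivers $v_0(T)\leq1-\lambda$ in all cases.

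\textbf{Case $v_0(0)<\lambda T$, parts~(iv)--(v), and the main obstacle.} Now $\tau<T$ and the engine runs only on $[0,\tau)$. I would compare the trajectory with the one started from the clipped profile $\tilde y$ with $\tilde z_k(0)=\hat z_k(0):=\min\{z_k(0),1-\lambda T\}$: then $\tilde v_0(0)=\lambda T$, $\tilde y$ is again a post-update profile, and $z_k(0)\geq\tilde z_k(0)$ for all $k$, so the previous case applies to $\tilde y$ and yields $\int_0^T[1-\tilde v_0(t)]dt\geq\lambda(T-1+\ee^{-T})+\hat z_1(0)(1-\ee^{-T})+\hat z_2(0)(1-\ee^{-T}-T\ee^{-T})$. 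Transferring this to the original trajectory needs the monotonicity of the fluid dynamics in the initial queue-length profile — a coordinatewise-larger $(z_k(0))_k$ producing at least as many cumulative service completions over $[0,T]$ (and at least as few idle servers at time $T$); granting it,~\eqref{eq:dyn} turns the bound into~(iv). Then~(v) is bookkeeping: if $v_0(0)\geq\lambda T$ then $1-v_0(0)-\lambda\leq1-\lambda T-\lambda$ and $z_2(0)\leq z_1(0)\leq1-\lambda T$, so the minima in~(v) collapse to the quantities of~(ii); if $v_0(0)<\lambda T$ they collapse to those of~(iv); and $1-\lambda T-\lambda>0$ is precisely the standing hypothesis $T<1/\lambda-1$. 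The delicate step — everything else reducing to the one-line ODE comparison above — is this monotonicity: proving it for the \emph{state-dependent} JSQ-with-estimates dynamics requires a careful coupling of the two fluid trajectories, since full coordinatewise domination of $(z_k(t))_k$ cannot be expected to persist (a heavier-loaded state pushes arrivals to higher estimates and can momentarily have a smaller $a_1$). A route that sidesteps the coupling is to re-run the $m(t)=0$ analysis at successive estimate levels on $[\tau,T)$ — at time $\tau$ all servers have estimate $\geq1$, so $v_1,z_2$ take over the roles of $v_0,z_1$ — telescoping upward in the spirit of Lemma~\ref{lem:fund}; this is where the bulk of the appendix work resides.
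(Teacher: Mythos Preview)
Your direct ODE analysis of the $m(t)=0$ regime is clean and gives (ii) and (iii) correctly; indeed it is arguably tidier than the paper's route for those two items. The gap is in (i) when $v_0(0)<\lambda T$, and in (iv): both rest on the monotonicity ``coordinatewise-larger $(z_k(0))_k$ produces at least as many cumulative service completions'' between two \emph{full} fluid trajectories, and you correctly flag this as delicate without proving it. As you note, coordinatewise domination of $(z_k(t))_k$ need not persist under the state-dependent routing, so the standard first-violation argument does not close; your suggested telescoping across estimate levels is a plausible repair, but it is not carried out, and the work involved is comparable to reproving Lemma~\ref{lem:fund} in a sharper form.

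The paper sidesteps exactly this difficulty by \emph{not} comparing two full trajectories. Instead it decomposes a lower bound for $z_1(t)$ into two pieces, each of which evolves under an explicit \emph{linear} ODE: (a) an auxiliary no-arrivals fluid system $\tilde y$ (i.e.\ \eqref{eq:fl} with $\lambda=0$) started from a clipped profile with $\tilde z_k(0)\le z_k(0)$ and $\tilde v_0(0)\ge\lambda T$, and (b) a separate pair $(y^0_{0,1},y^0_{1,1})$ tracking jobs that arrive at servers with estimate~$0$ during $[0,T]$. The only comparison needed is $\tilde z_1(t)+y^0_{1,1}(t)\le z_1(t)$ (and $\tilde z_k(t)\le z_k(t)$ for $k\ge2$), where the left-hand side is an explicit function of~$t$; this is exactly the kind of inequality the first-violation argument of Lemma~\ref{lem:fund} handles, because the auxiliary objects have no state-dependent routing. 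With $\tilde z_1(0)=\min\{z_1(0),1-\lambda T\}\ge\lambda$ one gets $z_1(T)\ge\tilde z_1(0)\ee^{-T}+\lambda(1-\ee^{-T})\ge\lambda$, yielding (i) in all cases; taking $\tilde z_k(0)=\hat z_k(0)$ gives (iv) directly. In short: your clipping idea is the right one, but clip the \emph{auxiliary} no-arrivals system rather than a second full trajectory, and add the explicit arrival-tracking term $y^0_{1,1}$ to recover the $\lambda(1-\ee^{-t})$ contribution you obtained only in the $m(t)=0$ window.
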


\begin{proof}[Proof of Proposition~\ref{prop:sync1}]

We first consider case~(a) with $y(0) = y^*$.
The fluid-limit equations can then be explicitly solved to obtain
$y_{0,0}(t) = 1 - \lambda - \lambda t$, $y_{0,1}(t) = \lambda t$,
$y_{1,1}(t) = \lambda$, and $y_{i,j}(t) = 0$ for all $j \geq i \geq 2$
for $t \in [0,T)$.
%Additionally, define $x_{00}^0(t)$ and $x_{00}^1(t)$ as the fraction
%of servers with queue length~$0$ at time~$t$ and queue estimate~$0$
%at time~$0$ that have received $0$ respectively $1$~jobs during $[0, t]$,
%with $x_{00}^0(t) + x_{00}^1(t) \equiv x_{00}(t)$.
%Then $y_{00}(t) = x_{00}^0(t)$, $y_{01}(t) = x_{00}^1(t) + x_{01}(t)$
%and $y_{11}(t) = x_{10}(t) + x_{11}(t)$, with
%$x_{00}^1(t) = \lambda [t - 1 + \ee^{- t}]$,
%$x_{01}(t) = \lambda[1 - \ee^{- t}]$,
%$x_{10}(t) = \lambda [1 - \ee^{- t}]$ and $x_{11}(t) = \lambda \ee^{ - t}$.

Since at an update moment
$y_{0,0}(T) = y_{0,0}(T^-) + y_{0,1}(T^-) = 1 - \lambda$,
$y_{1,1}(T) = y_{1,1}(T^-) = \lambda$,
and $y_{i,j}(t) = y_{i,j}(T^-) = 0$ for all $j \geq i \geq 2$,
we obtain a strictly cyclic evolution pattern with $y(k T) = y^*$
for all $k \geq 0$, as well as
$y_{0,0}(k T + t) = 1 - \lambda - \lambda t$,
$y_{0,1}(k T + t) = \lambda t$,
$y_{1,1}(k T + t) = \lambda$ for all $k \geq 0$ $t \in [0, T)$,
and $y_{i,j}(t) = 0$ for all $j \geq i \geq 2$, $t \geq 0$.

We now turn to case~(b).
First suppose that there exists a $k_0 < \infty$ such that
$v_0(k_0 T) \leq 1 - \lambda$.
It then follows from statement~(i) in Lemma~\ref{lem:bounds} that
$v_0(k T) \leq 1 - \lambda$ for all $k \geq k_0$.
Moreover, in view of statement~(v) in Lemma~\ref{lem:bounds} we have $Q((k + 1) T) \leq Q(k T) - c \min\{\epsilon, \Delta\} -
d \min\{z_2(k T), 1 - \lambda T\}$, with $c > 0$ and $d > 0$ when
$v_0(k T) \leq 1 - \lambda - \epsilon$ for any $\epsilon > 0$.
Thus, for any $\epsilon > 0$ it can only occur finitely many times
that $v_0(k T) \leq 1 - \lambda - \epsilon$, and additionally for any
$\delta > 0$, it can only occur finitely many times that
$z_2(k T) \geq \delta$, because otherwise $Q(k T)$ would eventually
fall to zero, which would contradict $v_0(k T) \leq 1 - \lambda$.
Thus we conclude that $v_0(k T) \to 1 - \lambda$ and $z_2(k T) \to 0$,
which implies that $y(k T) \to y^*$ as $k \to \infty$ as stated.

Now suppose that there exists no $k_0 < \infty$ such that
$v_0(k_0 T) \leq 1 - \lambda$, i.e, $v_0(k T) > 1 - \lambda$
for all $k \geq 1$.
Solving \eqref{eq:fl} then gives $\frac{d}{dt} w_0(t) = - \frac{d}{dt} w_1(t) = -
\lambda$ and  $m(t) = 0$ for $t \in [k T, k T + v_0(k T)/\lambda]$, where $v_0(k T)/\lambda > T$.
Lemma~\ref{basic2} with $K = 1$, $L = 1$ yields
\[
Q^{> 1}((k + 1) T) \leq Q^{> 1}(k T) - z_2(k T) [1 - \ee^{- T}].
\]
Thus we must have $z_2(k T) \to 0$ as $k \to \infty$,
because otherwise $Q^{> 1}(k T)$ would eventually drop below zero,
which would contradict the fact that it must always be positive.
Hence, for any $\epsilon > 0$, there exists $k_\epsilon$ such that $z_2(k T) \leq \epsilon (1-\ee^{- T}) / (2T)$ for all $k\geq k_\epsilon$,.
Statement~(iii) in Lemma~\ref{lem:bounds} may then be invoked
to obtain that for any $\epsilon > 0$ and $k \geq k_\epsilon$ if
$v_0(k T) \geq 1 - \lambda + \epsilon \geq \lambda T$, then
\[
Q((k + 1) T) \geq Q(k T) + \frac{\epsilon}{2} \frac{1-\ee^{-T}}{T}.
\]
It follows that for any $\epsilon > 0$, it can only occur finitely
many times that $v_0(k T) \geq 1 - \lambda + \epsilon$, as $Q(kT)$ is bounded since $Q^{>1}(kT)$ is decreasing (Lemma \ref{basic1}).
Thus we conclude $v_0(k T) \to 1 - \lambda$ and $z_2(k T) \to 0$
as $k \to \infty$, implying that $y(k T) \to y^*$ as $k \to \infty$
as stated.

\end{proof}

\iffalse

The proof of case (b) will be mainly presented in Appendix \ref{app:propvanish}. In the appendix, upper and lower bounds are proven for the number of departures, based on the value of the fraction of initially idle servers $v_0(0)$. These equations are used to support the following statements.

First, suppose that there exists a $k_0 < \infty$ such that $v_0(k_0 T) \leq 1-\lambda$. We then show that $v_0(k T) \leq 1-\lambda$ for all $k\geq k_0$. The only way that the previous statement is not violated, is when $v_0(kT) \leq 1-\lambda -\epsilon$ and $z_2(k T)\geq \delta$ happen finitely many times for any $\epsilon>0$ and $\delta>0$. From this it follows that $y(kT) \to y^*$.

Second, suppose that there is no $k_0 \leq \infty$ such that $v_0(kT) \leq 1-\lambda$. In that case, $m(t)=0$ for all $t\geq 0$, which means that no arrivals cause queue lengths to grow beyond 1. Naturally, $z_2(k) \to 0$ as $k \to \infty$. Similarly we also show that $v_0(kT)\uparrow 1-\lambda$, which also implies that $y(k T) \to y^*$. 

\fi

\section{Asynchronous updates}
\label{sec:async}

In this section we turn to the fluid limit for asynchronous updates.
As mentioned earlier, for non-exponential update intervals,
the state variables $Y_{ij}$ would need to be augmented with continuous
variables in order to obtain a Markovian state description.
This would give rise to a measure-valued fluid-limit description,
and involve heavy technical machinery, but provide little insight,
and hence we will focus on the fluid limit for exponential update intervals.
In Subsection~\ref{subsec:dynaasync} we provide a characterization
of the fluid-limit trajectory, along with a heuristic explanation,
numerical illustration and comparison with simulation.
In Subsection~\ref{subsec:fpasync} the fixed point of the fluid limit
is determined (Proposition~\ref{fp}), which immediately shows
that in stationarity queueing vanishes at fluid level for sufficiently
high~$\d$ (Corollary~\ref{cor:async1}) and also provides an upper
bound for the queue length at fluid level for any given $\d > 0$
(Corollary~\ref{coro:asyn1}).

%We again use $y_{i,j}^N(t) = Y_{i,j}^N(t) / N$ representing the
%fraction of servers with true queue length~$i$ and queue estimate
%$j \geq i$ at the dispatcher at time~$t$.

\subsection{Fluid-limit dynamics}
\label{subsec:dynaasync}

In the case of synchronous updates, the minimum queue estimate
$m(y^N(t))$ could never decrease between successive update moments.
As a result, the amount of time $\int_{t_0}^{t} \indis{m(Y^N(s)) = j} ds$
that the minimum queue length equals~$j$ in between successive
update moments converges to $\int_{t_0}^{t} \alpha_j(s) ds$,
as $N \to \infty$, with $\alpha_j(t) = \indis{m(y(t)) = j}$ and can be
directly expressed in terms of the minimum queue estimate on fluid scale.

In contrast, with asynchronous updates, the minimum queue estimate
may drop at any time when an individual server with a queue length
$i < m(y^N(t))$ sends an update at time~$t$, and becomes the only server
with a queue estimate below $m(y^N(t))$.
Consequently, the amount of time $\int_{t_0}^{t} \indis{m(Y^N(s)) = j} ds$
that the minimum queue length equals~$j$ no longer tends to
$\int_{t_0}^{t} \alpha_j(s) ds$ as $N \to \infty$, and may even have
a positive derivative for $j < m(y(t))$, i.e., for queue values
strictly smaller than the minimum queue estimate on fluid scale.

The fact that even in the limit the system may spend a non-negligible
amount of time in states that are not directly visible on fluid scale
severely complicates the characterization of the fluid limit.
In order to handle this complication and describe the evolution of the
fluid limit, it is convenient to define
$u_k(t) = \delta \sum_{i = 0}^{k - 1} (k - i) v_i(t)$
as the fluid-scaled rate at which the dispatcher can assign jobs
to servers with queue estimates below~$k$ as a result of updates,
with $v_i(t) = \sum_{l = i}^{\infty} y_{i,l}(t)$ representing the fraction
of servers with queue length~$i$ in fluid state~$y$ at time $t$ as before.

%{\color{red} Is het nu logisch dat $u_k$ een functie van $y(t)$ is en niet van $t$?}
We distinguish two cases, depending on whether
$u_{m(t)}(t)$  $\leq \lambda$ or not, and additionally introduce
$n(t)$, defined as $n(t) = m(t)$ in case
$u_{m(t)}(t) \leq \lambda$,
or $n(t) = \min\{n: u_n(t) > \lambda\} \leq m(t) - 1$ otherwise.
Then servers with a true queue length $i \leq n(t) - 1$ will be
assigned $n(t) - i$ jobs almost immediately after an update at
time~$t$, and then have both queue length and queue estimate $n(t)$.
Incoming jobs will be assigned to servers with a queue estimate
at most $n(t) - 1$ at rate $u_{n(t)}(t)$ and to servers
with queue estimate exactly equal to $n(t)$ at rate
$\zeta(t) = \lambda - u_{n(t)}(t)$.

Then the fluid limit $y(t)$ satisfies the system of differential equations
\begin{align}
\begin{split}
\label{eq:flasyn}
\frac{dy_{i,j}(t)}{dt} &=
y_{i+1,j}(t) \mathds{1}\{i < j\} - y_{i,j}(t) \mathds{1}\{i>0\} \\
& + \zeta(t) q_{i-1,j-1}(t) \mathds{1}\{i > 0\} - \zeta(t) q_{i,j}(t) \\
& +
\delta \sum_{k = 0}^{n(t) - 1} v_k(t) \mathds{1}\{i = j = n(t)\} \\
&+
\delta v_i(t) \mathds{1}\{i = j \geq n(t)\} - \delta y_{ij}(t),
\end{split}
\end{align}
for all $i = 0, 1, \dots, j \geq n(t)$, where
\[
q_{i,j}(t) = \frac{y_{i,j}(t)}{w_j(t)} \mathds{1}\{n(t)=j\}
\]
denotes the fraction of jobs assigned to a server with queue
length~$i$ and queue estimate~$j$ in fluid state~$y$ among the ones
that are assigned to a server with queue estimate at least $n(t)$,
defined as function of the fluid state~$y$ at time $t$ as above.

It can be checked that when $n(t) < m(t)$, the derivative
of $\sum_{j = 0}^{m(t)} w_j(t)$ is strictly positive, i.e., the
fraction of servers with a queue estimate below $m(t)$ becomes
positive, and the value of $m(t)$ instantly becomes equal to $n(t)$.

An informal outline of the derivation of the fluid limit as stated
in~\eqref{eq:flasyn} is provided in Appendix~\ref{sketchasyn}.

\subsubsection{Interpretation}

The above system of differential equations may be intuitively
interpreted as follows.
The first two terms correspond to service completions at servers
with $i + 1$ and $i$~jobs, just like in~\eqref{eq:fl}.
The third and fourth terms account for job assignments to servers
with a queue estimate $m(t)$.
The third term captures the resulting increase in the fraction
of servers with queue estimate $m(t) + 1$, while the fourth
term captures the corresponding decrease in the fraction
of servers with queue estimate $m(t)$.

The final three terms in~\eqref{eq:flasyn} correspond to the updates
from servers received at a rate~$\delta$.
The fifth term represents the increase in the fraction of servers
with queue estimate $n(t)$ due to updates from servers with
a queue length $k \leq n(t) - 1$ which are almost immediately being
assigned $n(t) - k$ jobs and then have both queue length
$i = n(t)$ and queue estimate $j = n(t)$.
The sixth term represents the increase in the fraction of servers
with a queue estimate $j = n(t)$ or larger due to updates
from servers with a queue length $i = j$.
The final term represents the decrease in the number of servers
with queue length~$i$ and queue estimate~$j$ due to updates.

Even though a non-zero fraction of the jobs are assigned to servers
with a queue estimate below $n(t)$, these events are not directly
visible at fluid level, and only implicitly enter the fluid limit
through the thinned arrival rate $\zeta(t)$.

Summing the equations~\eqref{eq:flasyn} over $i = 0, 1, \dots, j$ yields
%{\color{red}\tt is indeed true, as long as $\sum_{k=0}^{n(t)-1} v_k(t) = 0$, which is always the case since $n(t)\leq m(t)$ and  $\sum_{k=0}^{m(t)-1} v_k(t)=0$}
\begin{align*}
\frac{dw_j(t)}{dt} &=
\zeta(t) [\mathds{1}\{n(t)=j-1\} - \mathds{1}\{n(t)=j\}] \\
&+
\delta [v_j(t) - w_j(t)] \mathds{1}\{j \geq n(t)\},
\end{align*}
reflecting that servers with queue estimate $n(t)$ are assigned jobs,
and thus flipped into servers with queue estimate $n(t) + 1$,
at rate~$\zeta(t)$, and that servers with a queue estimate
$j \geq n(t)$ are created at an effective rate
$\delta [v_j(t) - w_j(t)]$ as a result of updates.

\begin{figure}
    \centering
    \begin{minipage}{0.49\textwidth}
        \centering
        \includegraphics[width=\textwidth]{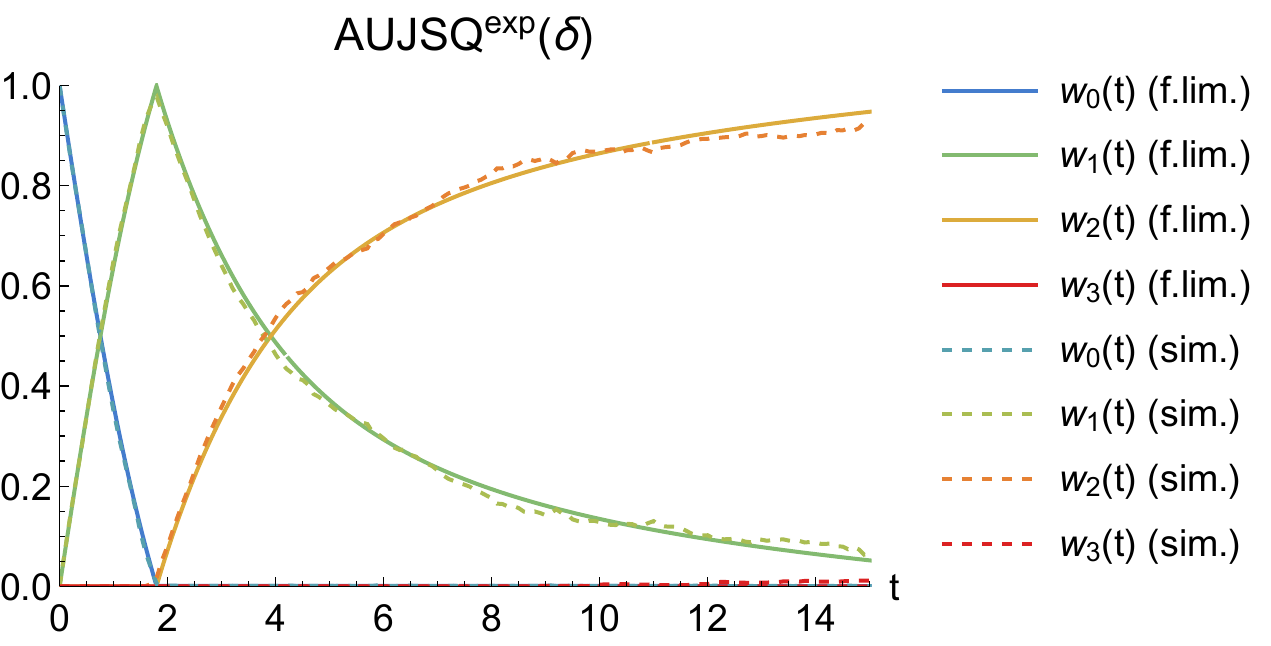} % first figure itself
        \caption{Numerical emulation of the fluid limit for AUJSQ$^{\textup{exp}}(0.85)$
        		and $\lambda = 0.7$, accompanied by simulation results with $N = 1000$,
        		averaged over 10~runs.}\label{fig:ss31}
    \end{minipage}\hfill
    \begin{minipage}{0.49\textwidth}
        \centering
        \includegraphics[width=\textwidth]{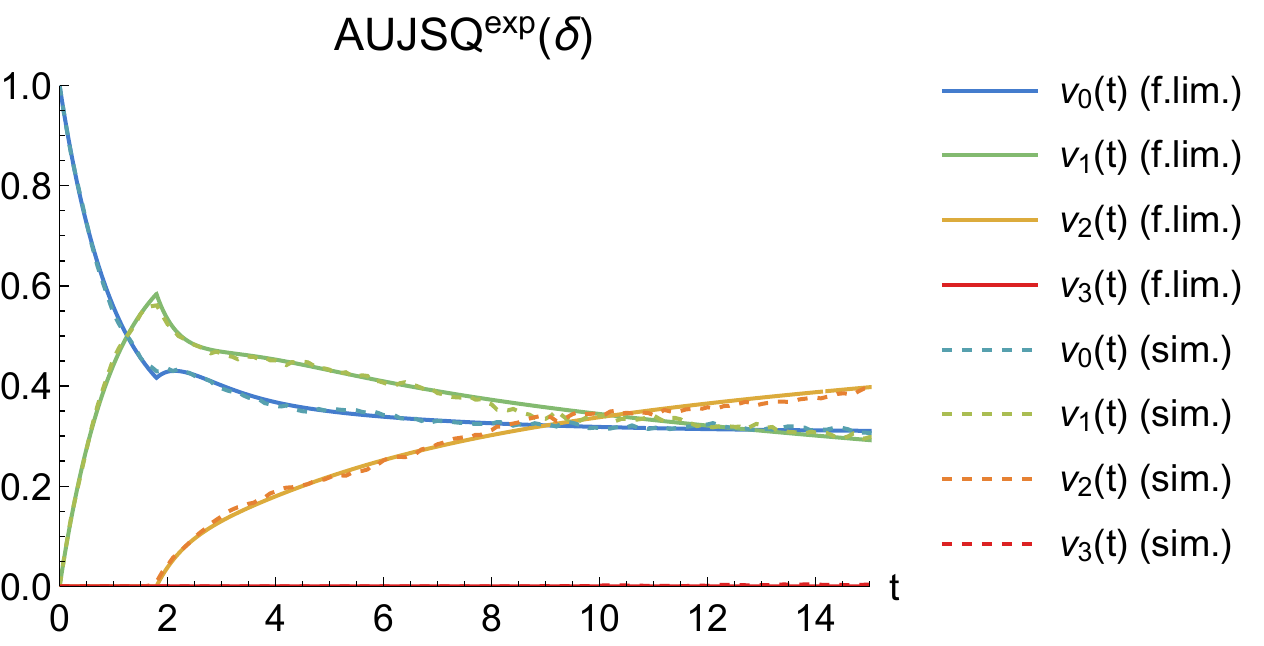} % second figure itself
        \caption{Numerical emulation of the fluid limit for AUJSQ$^{\textup{exp}}(0.85)$
        		and $\lambda = 0.7$, accompanied by simulation results with $N = 1000$,
        		averaged over 10~runs.}\label{fig:ss32}
    \end{minipage}
\end{figure}

\subsubsection{Numerical illustration and comparison with simulation}
\label{simuasyn}

Figures~\ref{fig:ss31}-\ref{fig:ss34} show the fluid-limit
trajectories~$y(t)$ as governed by the differential equations
in~\eqref{eq:flasyn} for $\aujsqe$, through stochastic simulation
for a system with $N = 1000$ servers and averaged over 10~runs.
Once again, the simulation results are nearly indistinguishable
from the fluid-limit trajectories.

In contrast to the synchronous variants
in Figures~\ref{fig:ss11}--\ref{fig:ss14}, the trajectories do not
oscillate, but approach stable values, corresponding to the fixed
point of the fluid-limit equations~\eqref{eq:flasyn} which we will
analytically determine in Proposition~\ref{fp}.
In Figures~\ref{fig:ss11} and~\ref{fig:ss12} where $\delta = 0.85$
is relatively low, we observe once again that $w_2(y(t))=w_2(t)$ and $v_2(y(t))=v_2(t)$
become strictly positive.
In Figures~\ref{fig:ss13} and~\ref{fig:ss14} where $\delta = 2.5$ is
sufficiently large, all servers have either zero or one jobs in the limit,
indicating that no queueing occurs.

Qualitatively similar results are observed for $\aujsqd$,
where the updates occur at strictly regular moments.
The results are displayed in Figures~\ref{fig:ss41}--\ref{fig:ss44},
which are again relegated to Appendix~\ref{app:simresasyn} because
of space limitations.

\begin{figure}
    \centering
    \begin{minipage}{0.49\textwidth}
        \centering
        \includegraphics[width=\textwidth]{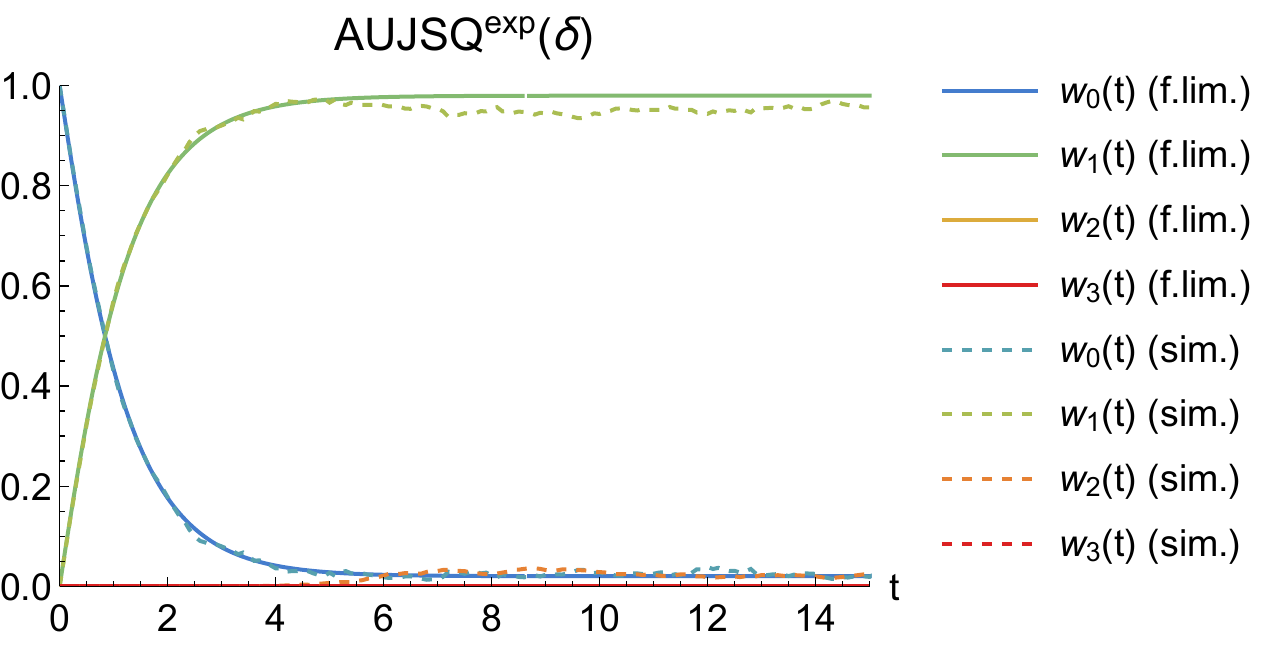} % first figure itself
        	\caption{Numerical emulation of the fluid limit for AUJSQ$^{\textup{exp}}(2.5)$
        		and $\lambda = 0.7$, accompanied by simulation results with $N = 1000$,
        		averaged over 10~runs.}\label{fig:ss33}
    \end{minipage}\hfill
    \begin{minipage}{0.49\textwidth}
        \centering
        \includegraphics[width=\textwidth]{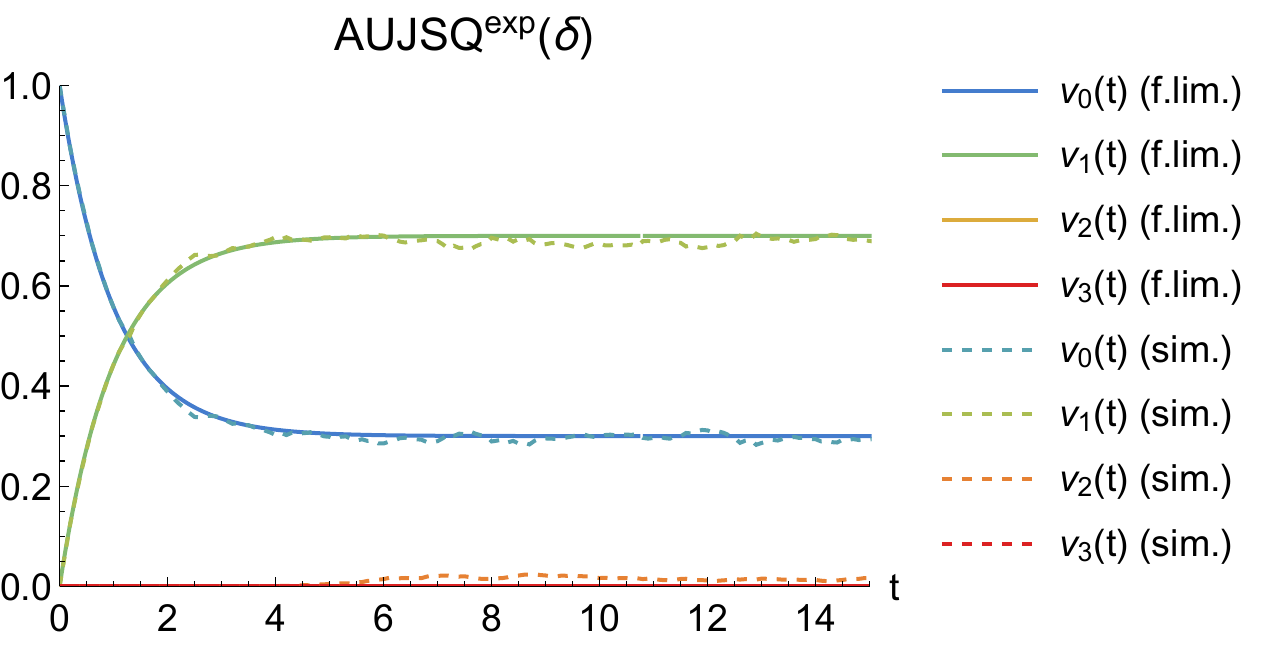} % second figure itself
        \caption{Numerical emulation of the fluid limit for AUJSQ$^{\textup{exp}}(2.5)$
        		and $\lambda = 0.7$, accompanied by simulation results with $N = 1000$,
        		averaged over 10~runs.}\label{fig:ss34}
    \end{minipage}
\end{figure}

\subsection{Fixed-point analysis}
\label{subsec:fpasync}

The next proposition identifies the fixed point of the fluid-limit
equations~\eqref{eq:flasyn} in terms of~$m^*$, defined as
\begin{equation}
\label{eq:mstarexp}
\begin{split}
m^* &= m(\lambda, \delta) =
\min\left\{m: \lambda < 1 - \left(\frac{1}{1 + \delta}\right)^{m + 1}\right\}\\
&=
\max\left\{m: \lambda \geq 1 - \left(\frac{1}{1 + \delta}\right)^m\right\} =
\left\lfloor - \frac{\log(1 - \lambda)}{\log(1 + \delta)} \right\rfloor,
\end{split}
\end{equation}
which may be interpreted as the minimum queue estimate at fluid level
in stationarity.
For compactness, define $a = \frac{1}{1 + \delta}$ and $b = \frac{1}{1  + \delta + \nu}$.

\begin{proposition}[Fixed point for $\aujsqe$]

\label{fp}

The fixed point of the fluid limit \eqref{eq:flasyn} is given by

\begin{eqnarray*}
y_{0,m^*}^*
&=&
\frac{ab^{m^* - 1}\d}{(1+\nu)(\delta+\nu)},  \\
y_{i,m^*}^*
&=&
\frac{ab^{m^* - i}\d}{1+\nu},
\hspace*{.2in} i = 1, \dots, m^*,  \\
y_{0,m^*+1}^*
&=&
a^{m^*+1}-\frac{a^2 b^{m^*-1}\d}{(1+\nu)(\d+\nu)}, \\
y_{1,m^*+1}^*
&=&
\d \left(a^{m^*+1}-\frac{a^2 b^{m^*-1}\d}{(1+\nu)(\d+\nu)}\right), \\
y_{i,m^*+1}^*
&=&
\d\left(a^{m^*+2-i} - \frac{a b^{m^*+1-i}}{1+\nu}\right),\hspace{.02in} i = 2, \dots, m^* + 1,
\end{eqnarray*}
and $y_{i,j}^* = 0$ when $j \neq m^*, m^* + 1$,
where $\nu \geq 0$ is the unique solution of the equation
$y_{0,m^*} + y_{0,m^*+1} = 1 - \lambda$, i.e.,
\begin{eqnarray}\label{eq:h}
h(\nu) = a^{m^*+1}+\frac{a^2 b^{m^*-1}\d^2}{(1+\nu)(\d+\nu)} = 1 - \lambda.
\label{nueq}
\end{eqnarray}
In particular, if $\lambda = 1 - (1 + \delta)^{- m^*}$, i.e.,
\begin{equation}
\label{eq:mstar}
m^* = - \frac{\log(1 - \lambda)}{\log(1 + \delta)},
\end{equation}
then $\nu = 0$, so
\begin{align*}
\begin{split}
y_{0,m^*}^* &=
\left(\frac{1}{1 + \delta}\right)^{m^*} = 1 - \lambda, \\
y_{i,m^*}^*
&=
\delta \left(\frac{1}{1 + \delta}\right)^{m^* + 1 - i} =
\left(1-(1-\lambda)^{1/m^*}\right)(1-\lambda)^{\frac{m^*-i}{m^*}}, \\
&\hspace*{1.3in} i = 1, \dots, m^*, 
\end{split}
\end{align*}
and $y_{i,m^*+1}^* = 0$ for all $i = 0, \dots, m^* + 1$.

\end{proposition}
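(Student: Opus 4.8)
The plan is to obtain the fixed point by setting every right-hand side of \eqref{eq:flasyn} to zero, after first using the mechanics of the scheme to reduce the support of $y^*$ drastically. I would start with three structural observations. (i) At a fixed point the minimum queue estimate is constant, say $\mu$, and necessarily $n^*=\mu$: the excerpt records that $n(t)<m(t)$ makes $\tfrac{d}{dt}\sum_{j=0}^{m(t)}w_j(t)>0$, which is incompatible with stationarity. (ii) Away from the (instantaneous, hence fluid-invisible) top-ups, a server receives a job only while it holds the minimum estimate $\mu$, which bumps its estimate to $\mu+1$; updates reset the estimate to the true queue length, and the top-ups return low-occupancy servers to queue length and estimate $\mu$. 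Consequently true queue lengths never exceed $\mu+1$, estimates lie in $\{\mu,\mu+1\}$, and $y_{i,j}^*=0$ for $j\notin\{\mu,\mu+1\}$. (iii) Work conservation in the fluid limit forces $v_0^*=1-\lambda$ (departure rate $1-v_0^*$ equals arrival rate $\lambda$), which is precisely $y_{0,\mu}^*+y_{0,\mu+1}^*=1-\lambda$.

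With this reduction I would solve the stationary equations level by level. Since $q_{i,j}^*=0$ for $j\neq\mu$ and $q_{i,\mu}^*=y_{i,\mu}^*/w_\mu^*$, setting $\nu:=\zeta^*/w_\mu^*$ (so $b=\tfrac{1}{1+\delta+\nu}$) turns $\tfrac{dy_{i,\mu}}{dt}=0$ into $y_{1,\mu}^*=(\delta+\nu)y_{0,\mu}^*$ and $y_{i+1,\mu}^*=(1+\delta+\nu)y_{i,\mu}^*$ for $1\le i<\mu$, plus a boundary equation at $i=\mu$ tying $(1+\delta+\nu)y_{\mu,\mu}^*$ to $\delta\sum_{k=0}^{\mu}v_k^*$ via the update terms; this yields the geometric shape $y_{i,\mu}^*=ab^{\mu-i}\delta/(1+\nu)$, $1\le i\le\mu$, and $y_{0,\mu}^*=ab^{\mu-1}\delta/((1+\nu)(\delta+\nu))$. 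For estimate $\mu+1$, $\tfrac{dy_{i,\mu+1}}{dt}=0$ gives the coupled recursion $y_{i+1,\mu+1}^*=(1+\delta)y_{i,\mu+1}^*-\nu y_{i-1,\mu}^*$ together with boundary equations at $i=0,1$ and at $i=\mu+1$ (where $\delta v_{\mu+1}^*=\delta y_{\mu+1,\mu+1}^*$ enters); feeding in the known $y_{i,\mu}^*$ and solving the linear recursion produces the stated mixed expressions in powers of $a$ and $b$. The identity that makes the recursion telescope is $a(1+\delta)=1$, equivalently $b(1+\nu a)=a$. The normalization constants are pinned down by the self-consistency $\nu w_\mu^*=\zeta^*=\lambda-u_\mu^*$ together with the boundary equations, and $\sum_{i,j}y_{i,j}^*=1$ then holds automatically as a check.

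The only remaining scalar is $\nu$, fixed by $v_0^*=1-\lambda$, which after substitution is exactly $h(\nu)=a^{m^*+1}+\tfrac{a^2b^{m^*-1}\delta^2}{(1+\nu)(\delta+\nu)}=1-\lambda$ of \eqref{eq:h}. I would show $h$ is continuous and strictly decreasing on $[0,\infty)$ (the $\nu$-dependent factor is a quotient whose denominator grows when $\mu\ge1$, and a short log-derivative estimate settles $\mu=0$), with $h(0)=a^{m^*}$ (since $b=a$ at $\nu=0$) and $h(\nu)\downarrow a^{m^*+1}$ as $\nu\to\infty$. Because the intervals $[1-a^{\mu},1-a^{\mu+1})$, $\mu\ge0$, partition $[0,1)$ and, by \eqref{eq:mstarexp}, $\lambda$ lies in the one with $\mu=m^*$, it is exactly for $\mu=m^*$ that $1-\lambda\in(a^{m^*+1},a^{m^*}]$ falls in the range of $h$; the intermediate value theorem then gives a unique root $\nu\ge0$, with $\nu=0$ iff $\lambda=1-a^{m^*}$, i.e., \eqref{eq:mstar}. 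To conclude that $y^*$ is a genuine fixed point one checks $w_{m^*-1}^*=0$ (so $m(y^*)=m^*$) and $u_{m^*}^*\le\lambda$ (so $n^*=m^*$, as assumed). Finally, the special case $\lambda=1-(1+\delta)^{-m^*}$ is a plain substitution: $\nu=0$ gives $b=a$, each bracket $a^{m^*+2-i}-ab^{m^*+1-i}$ collapses, so all $y_{i,m^*+1}^*=0$, while $y_{0,m^*}^*=a^{m^*}=1-\lambda$ and $y_{i,m^*}^*=\delta a^{m^*+1-i}$, which rewrites as $(1-(1-\lambda)^{1/m^*})(1-\lambda)^{(m^*-i)/m^*}$ using $1-\lambda=a^{m^*}$ and $\delta a=1-a$.

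The main obstacle will be the second step: handling the boundary cases ($i=0$, $i=\mu$, $i=\mu+1$), in which the two update terms and the instantaneous top-up term enter differently, and solving the coupled $(\mu+1)$-level recursion while keeping $w_\mu^*$, $\zeta^*$, $\nu$ and the total mass mutually consistent. Proving the strict monotonicity of $h$ on $[0,\infty)$, needed for uniqueness of $\nu$, is a close second.
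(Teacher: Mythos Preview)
Your proposal is correct and follows essentially the same route as the paper: set the derivatives in \eqref{eq:flasyn} to zero, reduce the support of $y^*$ to the two estimate levels $m^*$ and $m^*+1$, introduce $\nu=\zeta^*/w_{m^*}^*$, solve the resulting linear recursions in $i$, and pin down $\nu$ via $v_0^*=1-\lambda$, whose monotonicity in $\nu$ together with the endpoint values $h(0)=a^{m^*}$ and $h(\infty)=a^{m^*+1}$ yields existence and uniqueness. One small difference worth flagging: your observation~(ii) argues the support reduction heuristically from the scheme's dynamics, whereas the paper obtains it directly from the fixed-point equations by summing $\tfrac{d}{dt}\sum_{j\ge j_0}w_j(t)=-\delta\sum_{i<j_0}\sum_{j\ge j_0}y_{i,j}^*$ for $j_0\ge n^*+2$ (forcing those $y_{i,j}^*$ to vanish) and then reading off $y_{j,j}^*=0$ for $j\ge n^*+2$ from the $i=j-1$ equation; this is the rigorous version of your dynamical intuition and is easy to insert where you invoke~(ii).
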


Note that $h(u)$ is strictly decreasing in~$\nu$, with
$\lim_{\nu \downarrow 0} h(\nu) = (\frac{1}{1 + \delta})^{m^*} \leq 1 - \lambda$,
and $\lim_{\nu \to \infty} h(\nu) = (\frac{1}{1 + \delta})^{m^* + 1} <
1 - \lambda$, ensuring that $\nu \geq 0$ exists and is unique.

The result of Proposition~\ref{fp} is obtained by setting the
derivatives in~\eqref{eq:flasyn} equal to zero, observing that
$w_j(y^*) = 0$ for all $j \neq m^*, m^* + 1$, and then solving the
resulting equations.
The detailed proof arguments are presented in Appendix~\ref{deri}.

\begin{corollary}[No-queueing threshold for $\d$ in $\aujsqe$]
\label{cor:async1}
If the update frequency $\d \geq \lambda/(1-\lambda)$, then
$y_{i,j}^*=0$ for all $j \geq 2$, implying that queueing vanishes
at fluid level in stationarity.
\end{corollary}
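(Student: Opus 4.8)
The plan is to reduce the corollary to a bound on the stationary threshold level $m^*$ from~\eqref{eq:mstarexp} and then to read off the conclusion from the explicit fixed point in Proposition~\ref{fp}. The starting point is that the hypothesis $\delta \geq \lambda/(1-\lambda)$ is equivalent to $1 - \lambda \geq \frac{1}{1+\delta}$, hence to $-\log(1-\lambda) \leq \log(1+\delta)$, so by the closed form $m^* = \big\lfloor -\log(1-\lambda)/\log(1+\delta)\big\rfloor$ we get $m^* \leq 1$; since $m^* \geq 0$ always, $m^* \in \{0,1\}$.

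Next I would split into these two cases. By Proposition~\ref{fp} the fixed point $y^*$ is supported on queue estimates $j \in \{m^*, m^*+1\}$, with $y_{i,j}^* = 0$ otherwise. If $m^* = 0$ the support lies in $j \in \{0,1\}$ and the claim $y_{i,j}^* = 0$ for all $j \geq 2$ is immediate. If $m^* = 1$, then $-\log(1-\lambda)/\log(1+\delta) \geq 1$, i.e.\ $\delta \leq \lambda/(1-\lambda)$, which combined with the hypothesis forces the equality $\delta = \lambda/(1-\lambda)$, equivalently $\lambda = 1 - (1+\delta)^{-1} = 1 - (1+\delta)^{-m^*}$; this is exactly the boundary situation singled out at the end of Proposition~\ref{fp}, where $\nu = 0$ and $y_{i,m^*+1}^* = y_{i,2}^* = 0$ for all $i$. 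Hence in both cases $y_{i,j}^* = 0$ for all $j \geq 2$.

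Having established that, I would conclude by noting that the only surviving entries $y_{i,l}^*$ have $l \leq 1$, so $v_i^* = \sum_{l \geq i} y_{i,l}^* = 0$ for every $i \geq 2$; thus in the fixed point no server carries two or more jobs and no job waits, i.e.\ queueing vanishes at fluid level in stationarity. I do not expect a genuine obstacle here; the only delicate point is the knife-edge case $m^* = 1$, where one must recognise that the hypothesis is in fact tight and invoke the special case $\nu = 0$ of Proposition~\ref{fp} rather than the generic formulas (in which $y_{i,m^*+1}^*$ need not be zero). Everything else is elementary manipulation of logarithms together with bookkeeping on the support of the fixed point.
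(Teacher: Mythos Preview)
Your argument is correct and follows essentially the same route as the paper: use~\eqref{eq:mstarexp} to bound $m^*\le 1$ under the hypothesis, then invoke Proposition~\ref{fp} (with the special $\nu=0$ case for the knife-edge $\delta=\lambda/(1-\lambda)$, i.e.\ $m^*=1$) to conclude that the fixed point is supported on $j\le 1$. If anything, your case split is slightly more precise than the paper's one-line sketch, which loosely writes ``$m^*=0$ in case $\delta\ge\lambda/(1-\lambda)$'' before separately addressing the equality case.
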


Corollary~\ref{cor:async1} immediately follows from~\eqref{eq:mstarexp} and Proposition~\ref{fp}, in which $m^*=0$ in case $\delta \geq \lambda/(1-\lambda)$ so that only $y^*_{0,0}$, $y^*_{0,1}$ and $y^*_{1,1}$ are strictly positive. In case of equality we have the scenario described in the last part of Proposition~\ref{fp} where $y^*_{i,2}=0$ for all~$i$.
In case the many-server ($N \to \infty$) and stationary ($t \to \infty$)
limits can be interchanged (a rigorous proof of that would involve
establishing global asymptotic stability of the fluid limit, which
is beyond the scope of the present paper), Corollary~\ref{cor:async1}
implies that for $\d \geq \lambda/(1-\lambda)$, the mean stationary
waiting time under $\aujsqe$ vanishes as $N \to \infty$.

Proposition~\ref{fp} also yields an upper bound for the queue length
at fluid level as stated in the next corollary.

\begin{corollary}[Bounded queue length for $\aujsqe$]
\label{coro:asyn1}
The queue length at fluid level in stationarity has bounded support
on $\{0, \dots,$ $ m(\lambda, \delta) + 1\}$ for any $\lambda<1$ and $\delta>0$.
\end{corollary}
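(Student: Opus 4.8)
The plan is to read the claim off directly from the explicit fixed point computed in Proposition~\ref{fp}, which already asserts that $y^*_{i,j} = 0$ whenever $j \notin \{m^*, m^*+1\}$, with $m^* = m(\lambda,\delta)$. First I would recall that, by construction, the state variables always satisfy $j \geq i$, so that the fraction of servers with queue length~$i$ in the fixed point is the marginal $v^*_i = \sum_{l=i}^{\infty} y^*_{i,l}$. For any $i \geq m^*+2$, every index $l$ occurring in this sum obeys $l \geq i \geq m^*+2$, hence $y^*_{i,l}=0$ for all such $l$, and therefore $v^*_i = 0$. This shows that the queue length at fluid level in stationarity is supported on $\{0,1,\dots,m^*+1\}$, which is exactly the asserted set once we write $m^* = m(\lambda,\delta)$.

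Next I would verify that this support is genuinely bounded, i.e.\ that $m^*=m(\lambda,\delta)$ is finite for every $\lambda<1$ and $\delta>0$. This is immediate from the closed form $m^* = \lfloor -\log(1-\lambda)/\log(1+\delta)\rfloor$ in~\eqref{eq:mstarexp}: the numerator $-\log(1-\lambda)$ is finite because $\lambda<1$, and the denominator $\log(1+\delta)$ is strictly positive because $\delta>0$, so $m^*$ is a well-defined nonnegative integer. For completeness one may also note that the fixed point in Proposition~\ref{fp} is unambiguously defined, since $\nu$ is the unique root of~\eqref{eq:h}, so ``the queue length at fluid level in stationarity'' refers to a single distribution.

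There is essentially no obstacle: all the work is contained in Proposition~\ref{fp}, and the only thing to spell out is the passage from the two-index fixed point $(y^*_{i,j})$ to the marginal queue length distribution $(v^*_i)$ using the structural constraint $i \leq j$. The sole caveat worth flagging is the same one raised after Corollary~\ref{cor:async1}, namely that interpreting this as a statement about the truly stationary, finite-$N$ system requires interchanging the $N\to\infty$ and $t\to\infty$ limits; but as stated the corollary is purely a statement about the support of the fluid fixed point, so no such interchange is needed for the proof itself.
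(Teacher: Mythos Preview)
Your proposal is correct and follows exactly the approach the paper takes: the paper does not give a separate proof of Corollary~\ref{coro:asyn1} at all, simply stating that it is yielded by Proposition~\ref{fp}, and you have spelled out precisely why---the fixed point has $y^*_{i,j}=0$ for $j\notin\{m^*,m^*+1\}$, and the structural constraint $i\le j$ then forces $v^*_i=0$ for $i\ge m^*+2$. Your remarks on the finiteness of $m^*$ and the limit-interchange caveat are accurate and go slightly beyond what the paper makes explicit.
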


First of all, note that in order for queueing to vanish,
it is required that $m(\lambda, \delta) = 0$ or $m(\lambda, \delta) = 1$
and $\nu = 0$, i.e., $\delta \geq \lambda / (1 - \lambda)$,
which coincides with the threshold for $\delta$ in $\sujsqd$
as identified in Proposition~\ref{prop:sync1}.
Also, the upper bound $m(\lambda, \delta) + 1$ tends to infinity
as $\delta$ approaches zero, reflecting that for any fixed arrival
rate~$\lambda$, even arbitrarily low, the maximum queue length
grows without bound as the update frequency vanishes.

At the same time, for any positive $\delta > 0$,
$m(\lambda, \delta)$ is finite for any fixed $\lambda < 1$,
and only grows as $\log(1 / (1 - \lambda))$ as $\lambda \uparrow 1$
rather than $1 / (1 - \lambda)$ as in the absence of any queue feedback.
Thus, even an arbitrarily low update frequency ensures that the
queue length has bounded support and behaves far more benignly
in a high-load regime at fluid level.
This powerful property resembles an observation in work of
Tsitsiklis \& Xu \cite{TX12,TX13} in the context of a dynamic
scheduling problem where even a minuscule degree of resource
pooling yields a fundamentally different behavior on fluid scale.

\subsubsection{Number of jobs in the system}

The average queue length in the fixed point $\tilde{q}$ is
\begin{equation}\label{eq:jobs}
\begin{split}
&\tilde{q} = \sum_{i=1}^{m^*} i y_{i,m^*}^* + \sum_{i=1}^{m^*+1} i y_{i,m^*+1}^*\\
&=\frac{\delta +a^{m^*+1}+\delta  m^*-1}{\delta}+\frac{a^2 \delta   \left(-\delta	+b^{m^*}-1\right)}{b(1+\nu)	(\delta +v)}\\
%&=m^*+1+\frac{a^{m^*+1}-1}{\d}+\frac{a^2\d(b^{m^*}-1-\d)}{b(1+\nu)(\d+\nu)}\\
%&=m^*+1+\frac{a^{m^*+1}-1}{\d}+\frac{a^2 b^{m^*-1}\d}{(1+\nu)(\d+\nu)}-\frac{a^2\d(1+\d)}{b(1+\nu)(\d+\nu)}\\
&\overset{\eqref{eq:h}}{=}m^*+1+\frac{a^{m^*+1}-1}{\d}+\frac{1-\lambda-a^{m^*+1}}{\d}-\frac{a(1+\d+\nu)\d}{(1+\nu)(\d+\nu)}\\
&= m^*+1 -\lambda/\d-\frac{(1+\d+\nu) \d}{(1+\d)(1+\nu)(\d+\nu)} \geq m^* - \lambda/\d.
\end{split}
\end{equation}
In case of~\eqref{eq:mstar}, the average queue length is simply
%\[
%\tilde{q} = \sum_{i=1}^m i \delta \left(\frac{1}{1+\d}\right)^{m+1-i} = \frac{m\delta-1+\left(\frac{1}{1+\d}\right)^m}{\delta}.
%\]
%\[
%\tilde{q} = \sum_{i=1}^{m^*} i y_{i,m^*}^* + \sum_{i=1}^{m^*+1} i y_{i,m^*+1}^*.
%\]
%\[
%\tilde{q} = \frac{\left(\frac{1}{1+\d}\right)^{m^*}-1 + \d m^*}{\delta} =
%\frac{\left(1+\d\right)^{\frac{\log(1-\lambda)}{\log(1+\d)}}-1}{\d}-\frac{\log(1-\lambda)}{\log(1+\d)},
%\]
$
\tilde{q} =m^*+1-\lambda/\d-1 $ $=
m(\lambda, \delta) - \lambda / \delta$,
reflecting that the average number of job arrivals equals
the average number of job completions over the course of an update
interval, starting with $m^* = m(\lambda, \delta)$ jobs.

Figure \ref{fig:qtilde} plots the average queue length in the fixed point given
by \eqref{eq:jobs} as function of the update frequency~$\delta$
for $\lambda = 0.8$.
We observe that the average queue length monotonically decreases
with the update frequency, as expected, and is indeed contained between
$m^* - \lambda / \delta$ and $m^* + 1 - \lambda/ \delta$.

It then follows from the definition of $m^* = m(\lambda, \delta)$
that $\tilde{q} \to \infty$ as $\delta \downarrow 0$ for any
$\lambda < 1$, which indicates that $\aujsqd$ may perform
arbitrarily badly in the ultra-low feedback regime,
confirming the observations in Subsection~\ref{benchmarks}.

\subsubsection{Bound on the queue length for $\aujsqd$}

As noted earlier, the fluid-limit trajectory for $\aujsqd$ involves
a measure-valued process and is difficult to describe.
However, in a similar spirit as for $\aujsqe$, the value of~$m^*$
can be characterized as the largest integer for which
\[
\left(\sum_{i = 1}^{m} i \frac{(1/\d)^i}{i!} \e^{-1/\d} +
m \sum_{i = m + 1}^{\infty} \frac{(1/\d)^i}{i!} \e^{-1/\d}\right) \leq
\lambda / \delta,
\]
expressing that the average number of job arrivals should be larger
than or equal to the average number of job completions over the
course of an update interval, starting with $m^*$~jobs.
While the above equation cannot easily be solved in closed form,
it is not difficult to show that the inequality is weaker
than~\eqref{eq:mstarexp}, i.e., the value of~$m^*$ is lower than
for $\aujsqe$, confirming the superiority of $\aujsqd$ observed
in the simulation results in Subsection~\ref{benchmarks}.
It is further worth observing the strong similarity of the above
inequality with Proposition \ref{prop:sync2} governing the queue length upper
bound for $\sujsqd$.

\begin{figure}
	\begin{center}
		\includegraphics[width=.5\linewidth]{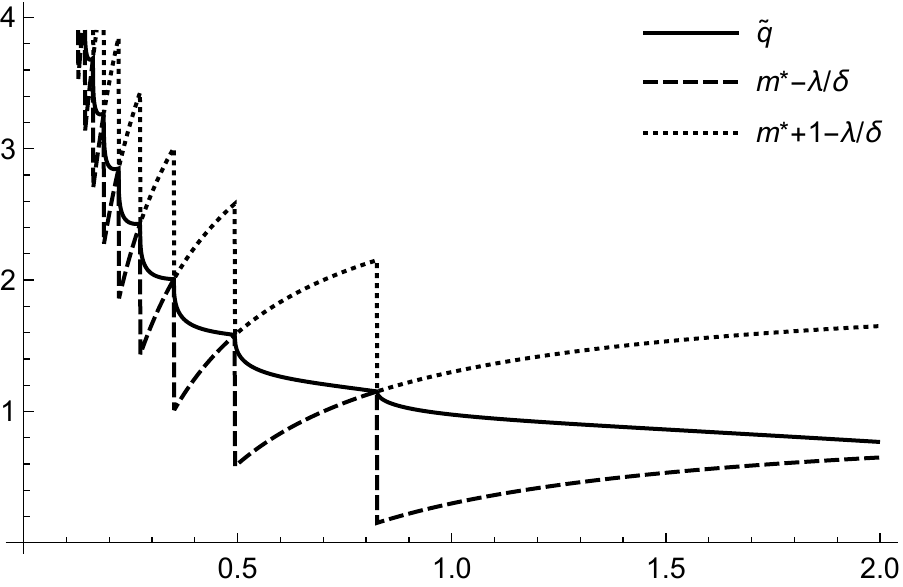}
	\end{center}
	\caption{Values of $\tilde{q}$ for $\lambda=0.7$ and different values of $\d$.}\label{fig:qtilde}
\end{figure}%This figure is made in Calculations31073.nb

\section{Conclusions}
\label{sec:conc}

We have introduced and analyzed a novel class of hyper-scalable
load balancing algorithms that only involve minimal communication
overhead and yet deliver excellent performance.
In the proposed schemes, the various servers provide occasional
queue status notifications so as to guide the dispatcher
in directing incoming jobs to relatively short queues.

We have demonstrated that the schemes markedly outperform JSQ($d$)
policies with a comparable overhead, and can drive the waiting
time to zero in the many-server limit with just one message per job.
The proposed schemes show their core strength and outperform sparsified
JIQ versions in the sparse feedback regime with less than one message per job,
which is particularly pertinent from a scalability viewpoint. %OCT29

In order to further explore the performance in the
many-server limit, we investigated fluid limits for synchronous
 as well as asynchronous exponential update intervals.
We used the fluid limits to obtain upper bounds for the stationary
queue length as function of the load and update frequency.
We also revealed a striking dichotomy in the ultra-low feedback
regime where the mean waiting time tends to a constant in the
synchronous case, but grows without bound in the asynchronous case.
Extensive simulation experiments are conducted to support the
analytical results, and indicate that the fluid-limit asymptotics
are remarkable accurate. %OCT29 er stond in de eerste regel 'the performance for $d<1$'

In the present paper we have adopted common Markovian assumptions,
and in future work we aim to extend the results to non-exponential
and possibly heavy-tailed distributions.
We also intend to pursue schemes that may dynamically suppress
updates or selectively refrain from updates at pre-scheduled epochs
to convey implicit information, and reduce the communication
overhead yet further.

\section*{Acknowledgments}
This work is supported by the NWO Gravitation Networks grant 024.002.003, an NWO TOP-GO grant and an ERC Starting Grant.

\appendix

\section{Derivation sketch of fluid limit~(\ref{eq:fl})
for synchronous updates.}
\label{sketchsync}

We now provide an informal outline of the derivation of the fluid
limit for synchronous updates as stated in~\eqref{eq:fl}.
Let $A_{i,j}(t)$ and $S_{i,j}(t)$ denote unit-rate Poisson processes,
$j \geq i \geq 0$, all independent.

The system dynamics (in between successive update moments) may then be
represented as (see for instance~\cite{PTW07})
\[
\begin{split}
Y_{i,j}^N(t) =& ~ Y_{i,j}^N(t_0)  + S_{i+1,j}\left(\int_{t_0}^{t} Y_{i+1,j}^N(s) ds\right) \indi{i < j} \\
&- S_{i,j}\left(\int_{t_0}^{t} Y_{i,j}^N(s) ds\right) \indi{i > 0} \\
&+ A_{i-1,j-1}\left(\lambda N \int_{t_0}^{t} p_{i-1,j-1}(Y^N(s)) ds\right) \indi{i > 0} \\
&- A_{i,j}\left(\lambda N \int_{t_0}^{t} p_{i,j}(Y^N(s)) ds\right),
\end{split}
\]
with $p_{i,j}(Y) = \frac{Y_{i,j}}{\sum_{k=0}^{j} Y_{k,j}} \indi{j = m(Y)}$.

%or equivalently, denoting by
%$Y_{\geq i,j}(t) = \sum_{k = i}^{j} Y_{k,j}(t)$ the number of servers
%with queue length~$i$ or larger and queue estimate exactly~$j$ at time~$t$,
%\[
%\begin{split}
%Y_{\geq i,j}(t) =& ~ Y_{\geq i,j}(0) \\
%&- S_{i,j}\left(\int_0^t Y_{i,j}(s) ds\right) \indi{i > 0} \\
%&+ A_{i-1,j-1}\left(\lambda N \int_0^t p_{i-1,j-1}^N(s)ds\right) \indi{i > 0}.
%\end{split}
%\]

Dividing by~$N$ and rewriting in terms of the fluid-scaled variables
$y_{i,j}^N(t) = \frac{1}{N} Y_{i,j}^N(t)$, we obtain
\begin{equation}
\begin{split}
y_{i,j}^N(t) =& ~ y_{i,j}^N(t_0) \label{martingalesync} \\
&+ \frac{1}{N} S_{i+1,j}\left(N \int_{t_0}^{t} y_{i+1,j}^N(s) ds\right) \indi{i < j} \\
&- \frac{1}{N} S_{i,j}\left(N \int_{t_0}^{t} y_{i,j}^N(s) ds\right) \indi{i > 0} \\
&+ \frac{1}{N} A_{i-1,j-1}\left(\lambda N \int_{t_0}^{t} p_{i-1,j-1}(Y^N(s)) ds\right) \indi{i > 0} \\
&- \frac{1}{N} A_{i,j}\left(\lambda N \int_{t_0}^{t} p_{i,j}(Y^N(s)) ds\right).
\end{split}
\end{equation}

Now introduce
\[
\tilde{S}_{k,l}(u) := S_{k,l}(u) - u,~~~
\tilde{A}_{k,l}(u) := A_{k,l}(u) - u,
\]
and observe that $\tilde{S}_{k,l}(\cdot)$ and $\tilde{A}_{k,l}(\cdot)$
are martingales.
By standard arguments it can be shown that both
$\frac{1}{N} \tilde{S}_{k,l}(N \int_{t_0}^{t} y_{k,l}^N(s) ds)$
and $\frac{1}{N} \tilde{A}_{k,l}(\lambda N \int_{t_0}^{t} p_{k,l}(Y^N(s)) ds)$
converge to zero as $N \to \infty$. 

Exploiting the fact that the minimum queue estimate $m(Y^N(t))$ cannot
decrease in between successive update moments, it can also be
established that
\[
\int_{t_0}^{t} p_{k,l}(Y^N(s)) ds \to \int_0^t p_{k,l}(s) ds,
\]
as $N \to \infty$, with $p_{k,l}(y) = \frac{y_{k,l}}{w_l(y)} \indi{m(y) = l}$
as defined earlier.

Taking the limit for $N \to \infty$ in~\eqref{martingalesync},
we conclude that any (weak) limit $\{y_{i,j}(t)\}_{t \geq 0}$ of the
sequence $\left(\{y_{i,j}^N(t)\}_{t \geq 0}\right)_{N \geq 1}$
in between successive update moments must satisfy
\[
\begin{split}
y_{i,j}(t) =& ~ y_{i,j}(t_0) + \int_{t_0}^{t} y_{i+1,j}(s) ds \indi{i < j} -
\int_{t_0}^{t} y_{i,j}(s) ds \indi{i > 0} \\
&+ \lambda \int_{t_0}^{t} p_{i-1,j-1}(y(s)) ds \indi{i > 0} -
\lambda \int_{t_0}^{t} p_{i,j}(y(s)) ds,
\end{split}
\]
with $y_{i,j}(0) = y_{i,j}^\infty$.

Rewriting the latter integral equation in differential form yields~\eqref{eq:fl}.

\section{Derivation sketch of fluid limit~(\ref{eq:fl})
for asynchronous updates.}
\label{sketchasyn}

We now provide an informal outline of the derivation of the fluid
limit as stated in~\eqref{eq:flasyn}.
Let $A_{i,j}(t)$, $B_{i,j}(t)$ and $S_{i,j}(t)$ denote unit-rate Poisson
processes, $j \geq i \geq 0$, all independent.
The system dynamics may then be represented as~\cite{PTW07}
\[
\begin{split}
Y_{i,j}^N(t) =& ~ Y_{i,j}^N(0)  
+ S_{i+1,j}\left(\int_0^t Y_{i+1,j}^N(s) ds\right) \indi{i < j} \\
&- S_{i,j}\left(\int_0^t Y_{i,j}^N(s)ds\right) \indi{i > 0} \\
&+ A_{i-1,j-1}\left(\lambda N \int_0^t p_{i-1,j-1}(Y^N(s)) ds\right) \indi{i > 0}  \\
&- A_{i,j}\left(\lambda N \int_0^t p_{i,j}(Y^N(s)) ds\right) \\
&+  \sum_{k = j}^{\infty} B_{i,k}\left(\delta \int_0^t Y_{i,k}^N(s) ds\right) \indi{i = j} \\
&- B_{i,j}\left(\delta \int_0^t Y_{i,j}^N(s) ds\right),
\end{split}
\]
with $p_{i,j}(Y)$ as before.
%or equivalently, denoting by
%$Y_{\geq i,j}(t) = \sum_{k = i}^{j} Y_{k,j}(t)$ the number of servers
%with queue length~$i$ or larger and queue estimate exactly~$j$ at time~$t$,
%\[
%\begin{split}
%Y_{\geq i,j}(t) =& ~ Y_{\geq i,j}(0) - S_{i,j}\left(\int_0^t (Y_{\geq i+1,j}(s) - Y_{\geq i,j}(s)) ds\right) \\
%&+ A_{i,j}\left(\lambda N \int_0^t p_{i,j}(s)ds\right). \\
%\end{split}
%\]
Dividing by~$N$ and rewriting in terms of the fluid-scaled variables
$y_{i,j}^N(t) = \frac{1}{N} Y_{i,j}^N(t)$, we obtain
\begin{equation}
\begin{split}
y_{i,j}^N(t) &= ~ y_{i,j}^N(0) \label{martingaleasyn} 
+ \frac{1}{N} S_{i+1,j}\left(N \int_0^t y_{i+1,j}^N(s) ds\right) \indi{i < j} \\
&- \frac{1}{N} S_{i,j}\left(N \int_0^t y_{i,j}^N(s)ds\right) \indi{i > 0} \\
&+ \frac{1}{N} A_{i-1,j-1}\left(\lambda N \int_0^t p_{i-1,j-1}^N(Y^N(s))\right) \indi{i > 0} \\
&- \frac{1}{N} A_{i,j}\left(\lambda N \int_0^t p_{i,j}^N(Y^N(s)) ds\right) \\
&+  \frac{1}{N} \sum_{k = j}^{\infty} B_{i,k}\left(\delta N \int_0^t y_{i,k}^N(s) ds\right) \indi{i = j} \\
&- \frac{1}{N} B_{i,j}\left(\delta N \int_0^t y_{i,j}^N(s) ds\right).
\end{split}
\end{equation}
Now introduce
\[
\tilde{S}_{k,l}(u) := S_{k,l}(u) - u,~~
\tilde{A}_{k,l}(u) := A_{k,l}(u) - u,~~
\tilde{B}_{k,l}(u) := B_{k,l}(u) - u,
\]
and observe that $\tilde{S}_{k,l}(\cdot)$, $\tilde{A}_{k,l}(\cdot)$
and $\tilde{B}_{k,l}(\cdot)$ are martingales.
By standard arguments it can be shown that 
\[
\begin{split}
&\frac{1}{N} \tilde{S}_{k,l}\left(N \int_0^t y_{k,l}^N(s) ds\right), \\
&\frac{1}{N} \tilde{A}_{k,l}\left(\lambda N \int_0^t p_{k,l}(Y^N(s)) ds\right), \\
&\frac{1}{N} \tilde{B}_{k,l}\left(\delta N \int_0^t y_{k,l}^N(s) ds\right)
\end{split}
\]
each converge to zero as $N \to \infty$.

Adopting time-scale separation arguments as developed by Hunt
\& Kurtz~\cite{HK94}, it can be established that
\[
\lambda \int_0^t p_{i,j}(Y^N(s)) ds \to \int_0^t \alpha_{i,j}(s) ds
\]
as $N \to \infty$, where the coefficients $\alpha_{i,j}(\cdot)$ satisfy
\[
\alpha_{i,j}(t) = \left\{\begin{array}{ll}
0 & i < j < n(t), \\
\lambda \pi_j(t) & i = j < n(t), \\
\lambda \frac{y_{i,j}(t)}{w_j(y(t))} \pi_{n(t)}(t) & i \leq j = n(t), \\
0 & i \leq j > n(t).
\end{array} \right.
\]
The coefficients $\pi_j(t)$ may be interpreted as the fraction of time
that the pre-limit minimum queue estimate equals~$j \leq n(t)$ when the
minimum queue estimate at fluid level is~$n(t)$, and satisfy the
relationship
\[
\lambda \pi_j(t) = \lambda \pi_{j-1}(t) + \delta v_j(y(t))
\]
for all $j = 1, \dots, n(t) - 1$, along with the normalization condition
$\sum_{j = 0}^{n(t)} \pi_j(t) = 1$.

Thus, we obtain
\[
\lambda \pi_j(t) = \delta \sum_{k = 0}^{j} v_k(y(t)),
\]
for all $j = 0, \dots, n(t) - 1$, and
\[
\begin{split}
\lambda \pi_{n(t)}(t) &= \lambda (1 - \sum_{j = 0}^{n(t) - 1} \pi_j(t)) =
\lambda - \delta \sum_{j = 0}^{n(t) - 1} \sum_{k = 0}^{j} v_k(y(t))\\ &=
\lambda - \delta \sum_{i = 0}^{n(t) - 1} (n(t) - i) v_i(y(t)) = \zeta(y(t)).
\end{split}
\]

We deduce that
\[
\alpha_{i,j}(t) =
\indi{i = j} \delta \sum_{k = 0}^{j} v_k(y(t)) \indi{j < n(t)} +
q_{i,j}(y(t) \zeta(y(t)),
\]
with $q_{i,j}(y) = \frac{y{i,j}}{w_j(y)} \indi{n(y) = j}$ as before,
yielding
\[
\begin{split}
&\int_0^t \alpha_{i-1,j-1}(s) ds - \int_0^t \alpha_{i,j}(s) ds \\
=&
\int_0^t q_{i-1,j-1}(y(s)) \zeta(y(s)) ds - \int_0^t q_{i,j}(y(s)) \zeta(y(s)) ds \\&+ 
\indi{i = j} \delta \sum_{k = 0}^{j - 1} v_k(y(s)) \indi{j = n(s)} ds \\&-
\indi{i = j} \delta \int_0^t v_j(y(s)) \indi{j < n(s)} ds.
\end{split}
\]

Thus we obtain
\begin{equation}\label{hk1}
\begin{split}
&\lambda \int_0^t p_{i-1,j-1}(Y^N(s)) ds \indi{i > 0} -
\lambda \int_0^t p_{i,j}(Y^N(s)) ds \to  \\
&~~\int_0^t q_{i-1,j-1}(y(s)) \zeta(y(s)) ds \indi{i > 0} -
\int_0^t q_{i,j}(y(s)) \zeta(y(s)) ds \\
&~~+ \indi{i = j} \delta \sum_{k = 0}^{j - 1} v_k(y(s)) \indi{j = n(y(s))} ds \\
&~~-\indi{i = j} \delta \int_0^t v_j(y(s)) \indi{j < n(y(s))} ds,
\end{split}
\end{equation}
as $N \to \infty$,
with $\zeta(y) = \lambda - \delta \sum_{l = 0}^{n(y)} (n(y) - l) v_l(y)$
and $q_{i,j}(y) = \frac{y_{i,j}}{w_j(y)} \indi{n(y) = j}$
as defined earlier.

Taking the limit for $N \to \infty$ in~\eqref{martingaleasyn},
and noting that
\[
\begin{split}
&\indi{i = j} \delta \int_0^t v_i(y(s)) ds -
\indi{i = j} \delta \int_0^t v_j(y(s)) \indi{j < n(y(s))} ds \\
&=
\delta \int_0^t v_i(y(s)) \indi{i = j \geq n(y(s))},
\end{split}
\]
we conclude that any (weak) limit $\{y_{i,j}(t)\}_{t \geq 0}$ of the
sequence $(\{y_{i,j}^N(t)\}_{t \geq 0})_{N \geq 1}$ must satisfy
\[
\begin{split}
& y_{i,j}(t) =  y_{i,j}(0) + \int_0^t y_{i+1,j}(s) ds \indi{i < j} -
\int_0^t y_{i,j}(s) ds \indi{i > 0} \\
&~+ \lambda \int_0^t q_{i-1,j-1}(y(s)) \zeta_{j-1}(y(s)) ds \indi{i > 0}\\
&~ -
\lambda \int_0^t q_{i,j}(y(s)) \zeta_j(y(s)) ds + \delta \sum_{k = 0}^{i - 1} \int_0^t v_k(y(s)) \indi{i = j = n(y)} ds \\
&~+
\delta \int_0^t v_i(y(s)) \indi{i = j \geq n(y(s))} - \delta \int_0^t y_{i,j}(s) ds,
\end{split}
\]
with $y_{i,j}(0) = y_{i,j}^\infty$.
Rewriting the latter integral equation in differential form yields~\eqref{eq:flasyn}.

\section{Proofs of Section~\ref{subsec:fpsync}}
\label{app:sec42}

\subsection{Proof of Lemma \ref{lem:fund}}
\label{app:lemfund}

Let $\tilde{y}_{i,j}(t)$, $i = 0, 1, \dots, j$, $j \geq 0$, be the solution
to the fluid-limit equation~\eqref{eq:fl} with $\lambda = 0$, i.e.,
\[
\frac{\tilde{y}_{i,j}(t)}{dt} =
\tilde{y}_{i+1,j}(t) \mathds{1}\{i < j\} - \tilde{y}_{i,j}(t) \mathds{1}\{i>0\},
\]
with initial conditions $\tilde{y}_{i,i}(0) = v_i(0)$
and $\tilde{y}_{i,j}(0) = 0$ for all $j \geq i + 1$, $i \geq 0$.
The solution $\tilde{y}_{i,j}(t)$ may be interpreted as the fluid limit
in the absence of any arrivals, and it is easily verified that
\[
\tilde{y}_{i,j}(t) = v_j(0) \frac{t^{j - i}}{(j - i)!} \ee^{- t},
\]
for all $i = 1, 2, \dots, j$, and
\[
\tilde{y}_{0,j}(t) = v_j(0) \sum_{k = j}^{\infty} \frac{t^k}{k!} \ee^{- t},
\]
$j \geq 0$.
Further introduce $\tilde{v}_i(t) = \sum_{j = i}^{\infty} \tilde{y}_{i,j}(t)$,
$\tilde{z}_k(t) = \sum_{i = k}^{\infty} \tilde{v}_i(t)$,
$\tilde{Q}^{> K}(t) = \sum_{k = K + 1}^{\infty} \tilde{z}_k(t)$,
and note from~\eqref{derivativequeuemass1} that
\begin{equation}
\frac{d \tilde{Q}^{> K}(t)}{dt} = - \tilde{z}_{K + 1}(t).
\label{derivativequeuemass3}
\end{equation}

We will first establish that $\tilde{z}_k(t) \leq z_k(t)$ for all
$k \geq 0$, $t \in [0, T)$, reflecting that the fraction of servers
with queue length~$k$ or larger on fluid scale is no less than what
it would be in the absence of any arrivals.
Suppose that were not the case, and let $t_0 \in [0, T)$ be the first
time when that inequality is about to be violated for some $k_0 > 0$.
Then we must have $z_{k_0}(t_0) = \tilde{z}_{k_0}(t_0)$, implying
$v_{k_0}(t_0) = z_{k_0}(t_0) - z_{k_0 + 1}(t_0) \leq
\tilde{z}_{k_0}(t_0) - \tilde{z}_{k_0 + 1}(t_0) = \tilde{v}_{k_0}(t_0)$,
since $\tilde{z}_{k_0 + 1}(t_0) \leq z_{k_0 + 1}(t_0)$.
Now observe that
\[
\frac{d z_{k_0}(t)}{dt} \mid_{t = t_0} =
- v_{k_0}(t_0) + \lambda \sum_{j = k_0 - 1}^{\infty} p_{k_0 - 1, j}(t_0) \geq
- v_{k_0}(t_0),
\]
while
\[
\frac{d \tilde{z}_{k_0}(t)}{dt} \mid_{t = t_0} =
- \tilde{v}_{k_0}(t_0) \leq - v_{k_0}(t_0) \leq \frac{d z_{k_0}(t)}{dt} .
\]
Hence $z_{k_0}(t)$ cannot fall below $\tilde{z}_{k_0}(t)$ at (just after) $t_0$, contradicting the
initial supposition in which $t_0$ would be the first
time that the inequality is about to be violated.

Invoking~\eqref{derivativequeuemass3}, we obtain
\[
\begin{split}
\int_{s = 0}^{t} z_{K + 1}(s) ds &\geq
\int_{s = 0}^{t} \tilde{z}_{K + 1}(s) ds = Q^{> K}(0) - \tilde{Q}^{> K}(t) \\
&=
Q^{> K}(0) - \sum_{k = K + 1}^{\infty} (k - K) \tilde{v}_k(t) \\
&=
Q^{> K}(0) - \sum_{k = K + 1}^{\infty} (k - K) \sum_{l =
k}^{\infty} \tilde{y}_{k,l}(t) \\
&=
Q^{> K}(0) - \sum_{k = K + 1}^{\infty} (k - K) \sum_{l = k}^{\infty} 
v_l(0) \frac{t^{l - k}}{(l - k)!} \ee^{- t} \\
&=
Q^{> K}(0) - \sum_{l = K + 1}^{\infty} v_l(0) \sum_{k = K + 1}^{l} (k - K)
\frac{t^{l - k}}{(l - k)!} \ee^{- t} \\
&=
Q^{> K}(0) - \sum_{l = K + 1}^{\infty} v_l(0) \sum_{m = 0}^{l - K - 1}
(l - K - m) \frac{t^m}{m!} \ee^{- t} \\
&=
Q^{> K}(0) - \sum_{l = K + 1}^{\infty} v_l(0) A(l - K, t) \\
&=
Q^{> K}(0) - \sum_{l = 1}^{\infty} v_{K + l}(0) A(l, t).
\end{split}
\]

Now observe that
\[
\begin{split}
\ee^t A(L, t) &= \sum_{l = 0}^{L} (L - l) \frac{t^l}{l!} =
\sum_{l = 0}^{L} \frac{L - l}{L + 1 - l} (L + 1 - l) \frac{t^l}{l!} \\
&\leq
\sum_{l = 0}^{L} \frac{L}{L + 1} (L + 1 - l) \frac{t^l}{l!} =
\frac{L}{L + 1} \sum_{l = 0}^{L} (L + 1 - l) \frac{t^l}{l!} \\
&=
\frac{L}{L + 1} \sum_{l = 0}^{L + 1} (L + 1 - l) \frac{t^l}{l!} =
\frac{L}{L + 1} \ee^t A(L + 1, t),
\end{split}
\]
or equivalently,
\[
\frac{A(L, t)}{L} \leq \frac{A(L + 1, t)}{L + 1},
\]
which may be interpreted from the fact that the expected fraction
of jobs that remain after a period of length~$t$ is smaller
with an initial queue of size~$L$ than $L + 1$.
Thus, $A(l, t) \leq \frac{l}{L} A(L, t)$ for all $l \leq L$.
Also,
\[
\begin{split}
A(L, t) &= \sum_{l = 0}^{L} (L - l) \frac{t^l}{l!} \ee^{- t} =
\sum_{l = 0}^{L - 1} (L - 1 - l) \frac{t^l}{l!} \ee^{- t} +
\sum_{l = 0}^{L - 1} \frac{t^l}{l!} \ee^{- t} \\
&\leq
A(L - 1, t) + 1,
\end{split}
\]
so that $A(l, t) \leq l - L + A(L, t)$ for all $l \geq L + 1$.
We obtain
\[
\begin{split}
\sum_{l = 1}^{\infty}& v_{K + l}(0) A(l, t) =
\sum_{l = 1}^{L} v_{K + l}(0) A(l, t) +
\sum_{l = L + 1}^{\infty} v_{K + l}(0) A(l, t) \\
\leq&
\sum_{l = 1}^{L} v_{K + l}(0) \frac{l}{L} A(L, t) +
\sum_{l = L + 1}^{\infty} v_{K + l}(0) [l - L + A(L, t)] \\
=&
\frac{1}{L} \left[\sum_{l = 1}^{L} l v_{K + l}(0) +
\sum_{l = L + 1}^{\infty} v_{K + l}(0)\right] A(L, t) +
\sum_{l = L + 1}^{\infty} (l - L) v_{K + l}(0) \\
=&
Q^{> K + L}(0) +
\frac{1}{L} \left[Q^{\leq K + L}(0) - Q^{\leq K}(0)\right] A(L, t),
\end{split}
\]
yielding
\[
\begin{split}
&Q^{> K}(0) - \sum_{l = 1}^{\infty} v_{K + l}(0) A(l, t) \\
&\geq
Q^{> K}(0) - Q^{> K + L}(0) -
\frac{1}{L} \left[Q^{\leq K + L}(0) - Q^{\leq K}(0)\right] A(L, t) \\
&=
Q^{\leq K + L}(0) - Q^{\leq K}(0) -
\frac{1}{L} \left[Q^{\leq K + L}(0) - Q^{\leq K}(0)\right] A(L, t) \\
&=
\left[Q^{\leq K + L}(0) - Q^{\leq K}(0)\right] \left[1 - \frac{A(L, t)}{L}\right].
\end{split}
\]

\subsection{Proof of Lemma \ref{auxiliary}}\label{app:lemms}

We have $m(t) = 0$, $\frac{d}{dt} w_0(t) $ $= - \frac{d}{dt} w_1(t) $ $= -
\lambda$ for $t \in \frac{1}{\lambda} [0, v_0(0)]$,
$m(t) = 1$, $\frac{d}{dt} w_1(t) $ $= - \frac{d}{dt} w_2(t) $ $ = - \lambda$
for $t \in \frac{1}{\lambda} [v_0(0), 2 v_0(0) + v_1(0)] $,
$m(t) = 2$, $\frac{d}{dt} w_2(t) $ $= - \frac{d}{dt} w_3(t) $ $ = - \lambda$
for $t \in \frac{1}{\lambda}
[2 v_0(0) + v_1(0), 3 v_0(0) + 2v_1(0) + v_2(0)]$, $\dots$,
$m(t) = j$, $\frac{d}{dt} w_j(t) $ $= - \frac{d}{dt} w_{j+1}(t) $ $= - \lambda$
for $t \in \frac{1}{\lambda} \big[\sum_{i = 0}^{j - 1} (j - i) v_i(0), $ $
\sum_{i = 0}^{j} (j + 1 - i) v_i(0)\big]$, 
assuming $\frac{1}{\lambda} \sum_{i = 0}^{j} (j + 1 - i) v_i(0) \leq t$.
In particular $m(s) \leq K - 1$ for all $s \in [0, t)$
if $\lambda t \leq \sum_{i = 0}^{K - 1} (K - i) v_i(0)$.
The latter inequality holds since
\[
\begin{split}
\sum_{i = 0}^{K - 1} (K - i) v_i(0) &= 
K \sum_{i = 0}^{K - 1} v_i(0) - \sum_{i = 0}^{K - 1} i v_i(0) \\
&=
K \left(1 - \sum_{i = K}^{\infty} v_i(0)\right) -
\sum_{i = 0}^{K - 1} i v_i(0) \\
&= K - Q^{\leq K}(0) \geq K - Q(0) \geq \lambda t.
\end{split}
\]

\subsection{Proof of Corollary~\ref{corollary1}}
\label{c1}

%Suppose $Q(0) \geq L - 1 - \lambda T$.
Taking $K = 0$ in Lemma~\ref{basic2}, we obtain
\[
\begin{split}
Q(T) &\leq Q(0) + \lambda T - \frac{Q^{\leq L}(0)}{L} B(L, T) \\
&\leq Q(0) + \lambda T - \frac{L - 1 - \lambda T - Q(0) + Q^{\leq L}(0)}{L} B(L, T) \\
%&= Q(0) + \lambda T - \left(1 - \frac{\lambda T + 1}{L}\right) B(L,T) +\frac{Q(0) - Q^{> L}(0)}{L} B(L, T) \\
&= Q(0) - \Delta_L + \frac{Q^{> L}(0)}{L} B(L, T)  \leq Q(0) - \Delta + \frac{Q^{> L}(0)}{L} B(L, T)
\end{split}
\]
with
\begin{equation}\label{eq:deltaL}
\Delta_L = \left(1 - \frac{\lambda T + 1}{L}\right) B(L, T) - \lambda T
\end{equation}
increasing in $L$, $\Delta = \Delta_{L^*}$ and $L^*=s(\lambda,T)$ the smallest value that is large enough, note that $\Delta>0$ because of (\ref{eq:cond}).

%if $\delta$ is sufficiently small (the upper-bound is strictly positive because of (\ref{eq:cond}));
%\[
%0 < \delta < \frac{L+1}{M} \left(\frac{L-\lambda T}{L+1} \inblocksP - \lambda T\right).
%\]
%In fact, the difference between $Q(0)$ and $Q(T)$ is strictly positive and at least
%\[
%\frac{L-\lambda T - \delta M}{L+1}\inblocksP - \lambda T >0.
%\]

\subsection{Proof of Corollary~\ref{corollary2}}
\label{c2}

%Suppose $Q(0) \leq L - 1 - \lambda T$.
Taking $K = 0$ in Lemma~\ref{basic2} and noting that
$1 - \frac{1}{L} B(L, T) \geq 0$ since $B(L,T) \leq L$, we obtain
\[
\begin{split}
Q(T) &\leq Q(0) + \lambda T - \frac{Q^{\leq L}(0)}{L} B(L, T) \\
&= Q(0) \left(1 - \frac{1}{L} B(L, T)\right) +
\frac{Q(0) - Q^{\leq L}(0)}{L} B(L, T) + \lambda T \\
&\leq (L - 1 - \lambda T) \left(1 - \frac{1}{L} B(L, T)\right) +
\frac{Q(0) - Q^{\leq L}(0)}{L} B(L, T) + \lambda T \\
%&= L - 1 - \left(1 - \frac{\lambda T + 1}{L} \right) B(L, T) + \frac{Q^{\leq L}(0)}{L} B(L, T) \\
&= L - 1 - \lambda T - \Delta_L + \frac{Q^{> L}(0)}{L} B(L, T) \\
&\leq L - 1 - \lambda T - \Delta + \frac{Q^{> L}(0)}{L} B(L, T),
\end{split}
\]
with $\Delta_L$ and $\Delta$ as defined in Appendix \ref{c1} and $\Delta > 0$ because of (\ref{eq:cond}).
%if $\delta$ is sufficiently small (the upper-bound is strictly positive because of (\ref{eq:cond}));
%\[
%0 < \delta < \left(\frac{L-\lambda T}{L+1} \inblocksP - \lambda T\right)\left(\frac{L+1}{M \inblocksP}\right).
%\]

\subsection{Proof of Lemma~\ref{lem:bounds}}
\label{app:lembounds}

Just like in the proof of Lemma~\ref{lem:fund},
let $\tilde{y}_{i,j}(t)$, $i = 0, 1, \dots, j$, $j \geq 0$, be the solution
to the fluid-limit equation~\eqref{eq:fl} with $\lambda = 0$, i.e.,
\[
\frac{\tilde{y}_{i,j}(t)}{dt} =
\tilde{y}_{i+1,j}(t) \mathds{1}\{i < j\} - \tilde{y}_{i,j}(t) \mathds{1}\{i>0\},
\]
but now with initial conditions such that $\tilde{z}_i(0) \leq z_i(0)$ for all
$i \geq 1$, with $\tilde{z}_i(t) = \sum_{k = i}^{\infty} \tilde{v}_k(t)$
and $\tilde{v}_k(0) = \tilde{y}_{k,k}(0)$,
and $\tilde{y}_{i,j}(0) = 0$ for all $j \geq i + 1$, $i \geq 0$.
As before, $\tilde{y}_{i,j}(t)$ may be interpreted as the fluid limit
in the absence of any arrivals, and it is easily verified that
\[
\tilde{y}_{i,j}(t) = \tilde{v}_j(0) \frac{t^{j - i}}{(j - i)!} \ee^{- t},
\]
for all $i = 1, 2, \dots, j$, and
\[
\tilde{y}_{0,j}(t) = \tilde{v}_j(0) \sum_{k = j}^{\infty} \frac{t^k}{k!} \ee^{- t},
\]
$j \geq 0$.
Further let $y_{0,1}^0(t)$, $y_{1,1}^0(t)$ be solutions to the system
of differential equations
\[
\begin{split}
\frac{d y_{0,1}^0(t)}{dt} &= y_{1,1}^0(t) \\
\frac{d y_{1,1}^0(t)}{dt} &= \lambda \mathds{1}\{t \leq t_0\} - y_{1,1}^0(t),
\end{split}
\]
with $t_0 = \min\{\tilde{v}_0(0) / \lambda, T\}$ and initial
conditions $y_{0,1}^0(0) = y_{1,1}^0(0) = 0$.

It is easily verified that
\[
\begin{split}
y_{0,1}^0(t) &=
\left\{\begin{array}{ll} \lambda [t - 1 + \ee^{- t}] & t \in [0, t_0], \\
\lambda [t_0 - \ee^{- (t - t_0)} + \ee^{- t}] & t \in [t_0, T],
\end{array}\right. \\
y_{1,1}^0(t) &=
\left\{\begin{array}{ll} \lambda [1 - \ee^{- t}] & t \in [0, t_0], \\
\lambda [\ee^{- (t - t_0)} - \ee^{- t}] & t \in [t_0, T].
\end{array}\right.
\end{split}
\]

The variable $y_{0,1}^0(t)$ may be interpreted as the fraction
of servers with queue length~$0$ at time~$0$, queue length~$0$
at time~$t$ and queue estimate~$1$ at time~$t$, i.e., which have
been assigned an arriving job and completed that job by time~$t$.
Likewise, $y_{1,1}^0(t)$ may be interpreted as the fraction
of servers with queue length~$0$ at time~$0$, queue length~$1$
at time~$t$ and queue estimate~$1$ at time~$t$, i.e., which have
been assigned an arriving job that remains to completed by time~$t$.

In a similar fashion as in the proof of Lemma~\ref{lem:fund},
it can be established that $\tilde{z}_1(t) + y_{1,1}^0(t) \leq z_1(t)$
and $\tilde{z}_i(t) \leq z_i(t)$ for all $i \geq 2$ and $t \in [0, T]$.

\iffalse
Suppose for some value of $t$, $\tilde{z}_1(t) + y_{1,1}^0(t)=z_1(t)$. Then
\[
\frac{d(\tilde{z}_1(t) + y_{1,1}^0(t))}{dt} = -\tilde{v}_1(t)+\lambda 1\{t\leq t_0\} - y_{1,1}^0(t) \leq -v_1(t)+\lambda 1\{t\leq t_0\} \leq \frac{d z_1(t)}{dt}
\]
as $v_1(t) = z_1(t) - z_2(t) \leq z_1(t) + y^0_{1,1}(t)-\tilde{z}_2(t) = \tilde{v}_1(t)+y_{1,1}^0(t)$.
\fi

To prove statement~(i), consider
$\tilde{z}_1(0) = \min\{z_1(0), 1 - \lambda T\} \geq
\min\{1 - v_0(0), 1 - \lambda T\} \geq \min\{\lambda, 1 - \lambda T\} = \lambda$,
and $\tilde{z}_k(0) = 0$ for all $k \geq 2$.
Noting that $\tilde{v}_0(0) = 1 - \tilde{z}_1(0) \geq \lambda T$ yields
$t_0 = T$, and thus $y_{1,1}^0(T) = \lambda (1 - \ee^{- T})$.
Also, $\tilde{z}_1(T) = \tilde{y}_{1,1}(T) = \tilde{v}_1(0) \ee^{- T} =
\tilde{z}_1(0) \ee^{- T} \geq \lambda \ee^{- T}$.
We obtain that
\[
z_1(T) \geq \tilde{z}_1(T) + y_{1,1}^0(T) \geq \lambda,
\]
yielding $v_0(t) = 1 - z_1(T) \leq 1 - \lambda$.

To establish assertion~(ii), consider $\tilde{z}_k(0) = z_k(0)$
for all $k \geq 1$.
Then just like in the proof of Lemma~\ref{basic2},
noting that $A(l, T) \leq A(2, T) + l - 2$ for all $l \geq 2$,
\[
\begin{split}
\int_{t = 0}^{T}& \tilde{z}_1(t) dt =
\tilde{Q}(0) - \sum_{l = 1}^{\infty} v_l(0) A(l, T) \\
&=
\sum_{l = 1}^{\infty} l v_l(0) - \sum_{l = 1}^{\infty} v_l(0) A(l, T) =
\sum_{l = 1}^{\infty} v_l(0) [l - A(l, T)] \\
&=
\sum_{l = 1}^{\infty} v_l(0) [1 - A(1, T)] +
\sum_{l = 2}^{\infty} v_l(0) [l - 1 + A(1, T) - A(l, T)] \\
&\geq
z_1(0) [1 - A(1, T)] +
\sum_{l = 2}^{\infty} v_l(0) [1 + A(1, T) - A(2, T)] \\
&=
z_1(0) [1 - \ee^{- T}] + z_2(0) [1 - \ee^{- T} - T \ee^{- T}].
\end{split}
\]

Also, $t_0 = T$, and thus
\[
\int_{t = 0}^{T} y_{1,1}^0(t) dt = y_{0,1}^0(T) =
\lambda [T - 1 + \ee^{- T}].
\]

We obtain that
\[
\begin{split}
\int_{t = 0}^{T}& [1 - v_0(t)] dt = \int_{t = 0}^{T} z_1(t) dt \geq
\int_{t = 0}^{T} [y_{1,1}^0(t) + \tilde{z}_1(t)] dt \\
&\geq
\lambda [T - 1 + \ee^{- T}] +
z_1(0) [1 - \ee^{- T}] + z_2(0) [1 - \ee^{- T} - T \ee^{- T}] \\
&=
\lambda T + [z_1(0) - \lambda] [1 - \ee^{- T}] +
z_2(0) [1 - \ee^{- T} - T \ee^{- T}].
\end{split}
\]

To prove statement (iii), consider as before $\tilde{z}_k(0) = z_k(0)$
for all $k \geq 1$.
Further observe that
\[
A(l, T) \geq A(l - 1, T) + A(1, T),
\]
and
\[
A(k, T) - k = - B(k, T) \geq - T.
\]
Then just like in the proof of Lemma~\ref{basic2},
noting that $A(l, T) \leq A(2, T) + l - 2$ for all $l \geq 2$,
\[
\begin{split}
\tilde{Q}(T) =& \sum_{l = 1}^{\infty} v_l(0) A(l, T) =
\sum_{l = 1}^{\infty} v_l(0) l +
\sum_{l = 1}^{\infty} v_l(0) [A(l, T) - l] \\
=&
Q(0) + \sum_{l = 1}^{\infty} v_l(0) [A(1, T) - 1] \\
&+
\sum_{l = 2}^{\infty} v_l(0) [A(l, T) - l - A(1, T) + 1]\\
 \geq &
Q(0) + z_1(0) [A(1, T) - 1] +
\sum_{l = 2}^{\infty} v_l(0) [A(l - 1, T) - l + 1]\\
 \geq &
z_1(0) [1 - \ee^{- T}] - z_2(0) T.
\end{split}
\]

Also, $t_0 = T$, and thus
\[
y_{0,1}^0(T) = \lambda [T - 1 + \ee^{- T}].
\]

We obtain
\[
\begin{split}
Q(T) &\geq y_{0,1}^0(T) + \tilde{Q}(T) \\
&\geq
\lambda [T - 1 + \ee^{- T}] + z_1(0) [1 - \ee^{- T}] - z_2(0) T \\
&=
\lambda T + [z_1(0) - \lambda] [1 - \ee^{- T}] - z_2(0) T.
\end{split}
\]

To establish assertion~(iv), consider
$\tilde{z}_k(0) = \min\{z_k(0), 1 - \lambda T\}$ for all $k \geq 1$.

Then, just like in the proof of statement~(ii),
\[
\begin{split}
\int_{t = 0}^{T}& \tilde{z}_1(t) dt =
\tilde{Q}(0) - \sum_{l = 1}^{\infty} \tilde{v}_l(0) A(l, T) \\
&=
\sum_{l = 1}^{\infty} l \tilde{v}_l(0) -
\sum_{l = 1}^{\infty} \tilde{v}_l(0) A(l, T) =
\sum_{l = 1}^{\infty} \tilde{v}_l(0) [l - A(l, T)] \\
&=
\sum_{l = 1}^{\infty} \tilde{v}_l(0) [1 - A(1, T)] +
\sum_{l = 2}^{\infty} \tilde{v}_l(0) [l - 1 + A(1, T) - A(l, T)] \\
&\geq
\tilde{z}_1(0) [1 - A(1, T)] +
\sum_{l = 2}^{\infty} \tilde{v}_l(0) [1 + A(1, T) - A(2, T)] \\
&=
\tilde{z}_1(0) [1 - \ee^{- T}] + \tilde{z}_2(0) [1 - \ee^{- T} - T \ee^{- T}].
\end{split}
\]

Also, noting that $\tilde{v}_0(0) = 1 - \tilde{z}_1(0) \geq \lambda T$
yields $t_0 = T$, and thus $y_{1,1}^0(T) = \lambda (1 - \ee^{- T})$.

We obtain that
\[
\begin{split}
\int_{t = 0}^{T}& [1 - v_0(t)] dt = \int_{t = 0}^{T} z_1(t) dt \geq
\int_{t = 0}^{T} [y_{1,1}^0(t) + \tilde{z}_1(t)] dt \\
&\geq
\lambda [T - 1 + \ee^{- T}] +
\tilde{z}_1(0) [1 - \ee^{- T}] + \tilde{z}_2(0) [1 - \ee^{- T} - T \ee^{- T}] \\
&=
\lambda T + [\tilde{z}_1(0) - \lambda] [1 - \ee^{- T}] +
\tilde{z}_2(0) [1 - \ee^{- T} - T \ee^{- T}].
\end{split}
\]

Statement~(v) follows from statements~(ii) and~(iv).

\section{Derivation of fixed point}
\label{deri}

For convenience, denote by
$m^* = \min(j | w_j^* > 0)$ the minimum queue estimate
associated with the fixed point.
Further denote $n^* = m^*$ if $u_{m^* - 1}^* \leq \lambda$,
or $n^* = \min\{n: u_n^* > \lambda\}$ otherwise.

Setting the derivatives in \eqref{eq:flasyn} equal to zero
and denoting $\nu = \zeta / w_{n^*}$, we deduce
\begin{align}\label{eq:flasyncsettozero}
\begin{split}
0&=
y_{i+1,j}^* \mathds{1}\{i+1 \leq j\} - y_{i,j}^* \mathds{1}\{i>0\}\\
&+
\nu y_{i-1,j-1}^* \mathds{1}\{n^*=j-1\} -
\nu y_{i,j}^* \mathds{1}\{n^*=j\} \\
&+
\delta \sum_{k = 0}^{i} v_k^* \mathds{1}\{i = j = n^*\} +
\delta v_i^* \mathds{1}\{i = j \geq n^* + 1\} -
\delta y_{i,j}^*
\end{split}
\end{align}
for all $i = 0, 1, \dots, j \geq n^*$. 
Similarly, we have for $j_0\geq n^*+2$,
\begin{equation}\label{eq:dwfixedpoint}
0 = \frac{d}{dt} \sum_{j = j_0}^{\infty} w_j(t) =
\delta \sum_{j = j_0}^{\infty} [v_j^* - w_j^*] =
- \delta \sum_{i = 0}^{j_0 - 1} \sum_{j = j_0}^{\infty} y_{i,j}^*
\end{equation}
which yields $y_{i,j}^*=0$ for all $j\geq n^*+2$ and $i<j$.
Additionally, applying~(\ref{eq:flasyncsettozero}) with $i=k+1$
and $j=k+2$, gives 
\[
	0 =	y_{k+2,k+2}^* - y_{k+1,k+2}^* - \delta y_{k+1,k+2}^* = y_{k+2,k+2}^*
\]
for all $k\geq n$. In conclusion, it is readily seen that $w_j^*=0$ for all $j\geq n^* + 2$. This implies $m^* = n^*$, and yields
\begin{align*}
&y_{i+1,m^*}^* \mathds{1}\{i \neq m^*\} -
(\mathds{1}\{i \neq 0\} + \nu + \delta) y_{i,m^*}^* \\
&+
 \delta \sum_{k = 0}^{m^*} v_k^* \mathds{1}\{i = m^*\}=0,
\end{align*}
for all $i = 0, 1, \dots, m^*$, and
\begin{align*}
&y_{i+1,m^*+1}^*\mathds{1}\{i\neq m^*+1\} - (\mathds{1}\{i \neq 0\} + \delta) y_{i,m^*+1}^*\\ &+
\nu y_{i-1,m^*} \mathds{1}\{i \neq 0\} +
\delta y_{m^*+1,m^*+1}^* \mathds{1}\{i = m^* + 1\} = 0,
\end{align*}
for all $i = 0, 1, \dots, m^* + 1$.

%Assume that the fixed point of the fluid limit is such that
%\[
%\sum_{i = 0}^{m} y_{i,m}^* + \sum_{i = 0}^{m + 1} y_{i,m+1}^* = 1
%\]
%for some value of $m$, which may be interpreted as the minimum
%virtual queue length.

We obtain (with $\mu \equiv 1$)
\begin{align*}
(\nu + \delta) y_{0,m^*}^* &= \mu y_{1,m^*}^* \\
(\mu + \nu + \delta) y_{i,m^*}^* &= \mu y_{i+1,m^*}^*,
\hspace*{.2in} i = 1, \dots, m^* - 1 \\
(\mu + \nu) y_{m^*,m^*}^* &= \delta \left[\sum_{i = 0}^{m^* - 1} y_{i,m^*}^* +
\sum_{i = 0}^{m^*} y_{i,m^*+1}^*\right],
\end{align*}
or equivalently,
\begin{align}
(\mu + \nu + \delta) y_{m^*,m^*}^* &=
\delta \sum_{i = 0}^{m^*} [y_{i,m^*}^* + y_{i,m^*+1}^*]\nonumber\\ 
&= \delta [1 - y_{m^*+1,m^*+1}^*] ,\label{eq:25071}
\end{align}
and
\begin{align}
\delta y_{0,m^*+1}^* &= \mu y_{1,m^*+1}^* \nonumber\\
(\mu + \delta) y_{i,m+1}^* &= \mu y_{i+1,m^*+1}^* + \nu y_{i-1,m^*}^*,\nonumber\\
&\hspace*{1in} i = 1, \dots, m^* \nonumber\\
\mu  y_{m^*+1,m^*+1}^* &= \nu y_{m^*,m^*}^*,\label{eq:25072}
\end{align}

or equivalently,
\[
(\mu + \delta) y_{m^*+1,m^*+1}^* = \nu y_{m^*,m^*}^* + \delta y_{m^*+1,m^*+1}^*,
\]
along with
\[
\sum_{i = 0}^{m^*} y_{i,m^*}^* + \sum_{i = 0}^{m^* + 1} y_{i,m^*+1}^* = 1.
\]

Note that Equations (\ref{eq:25071}) and (\ref{eq:25072}) determine $y_{m,m}^*$ and $y_{m+1,m+1}^*$: 
\begin{align*}
	y_{m,m}^*&=
	%\frac{\d}{\mu+\nu+\d+\d\nu/\mu}&\equiv
	 \frac{\d}{(1+\nu)(1+\d)},\\
	y_{m+1,m+1}^*&=
	%\frac{\d\nu}{\mu+\nu\mu+\d\mu+\d\nu}&\equiv 
	\frac{\nu\d}{(1+\nu)(1+\d)}.
\end{align*}

It follows from the above equations (flux up equals flux down) that
\begin{align*}
&\delta \sum_{i = 0}^{m^* - 1} (m^* - i) [y_{i,m^*}^* + y_{i,m^*+1}^*] +
\nu w_{m^*}^*\\ &=
\mu\left[ \sum_{i = 1}^{m^*} y_{i,m^*}^* + \sum_{i = 1}^{m^* + 1} y_{i,m^*+1}^*\right],
\end{align*}
which implies that
\[
\nu = \frac{\lambda - \Delta}{w_m^*},
\]
with
\[
\Delta = \delta \sum_{i = 0}^{m^* - 1} (m^* - i) [y_{i,m^*}^* + y_{i,m^*+1}^*],
\]
is equivalent with
\[
y_{0,m^*}^* + y_{0,m^*+1}^* = 1 - \frac{\lambda}{\mu},
\]
reflecting that each server is idle a fraction of the time
$1 - \lambda / \mu$.

Recall that $a=\frac{1}{1+\delta}$ and $b=\frac{1}{1+\delta+\nu}$.
We can use the above equations to express
$y_{m^*-j,m^*}$ in $y_{m^*-j+1,m^*+1}^*$ for all $j = 1, \dots, m^*$,
and recursively obtain
\[
\begin{aligned}
y_{m^*-j,m^*}^*
&=
%\left(\frac{1}{1 + \nu + \delta}\right)^j y_{m^*,m^*}^* =
b^j y_{m^*,m^*}^*, 
\hspace*{.1in} j = 0, \dots, m^* - 1 \\
y_{i,m^*}^*
&=
%\left(\frac{1}{1 + \nu + \delta}\right)^{m^* - i} y_{m^*,m^*}^* =
b^{m^* - i} y_{m^*,m^*},
\hspace*{.1in} i = 1, \dots, m^* \\
y_{0,m^*}^*
&=
%\frac{1}{\nu + \delta} \left(\frac{1}{1 + \nu + \delta}\right)^{m^* - 1}
y_{m^*,m^*}^*\\
& = \frac{b^{m^* - 1}}{\nu + \delta} y_{m^*,m^*}.
\end{aligned}
\]

Subsequently, we express $y_{m^*-j,m^*+1}^*$ in terms of
$y_{m^*-j+1,m^*+1}^*$ and $y_{m^*-j-1,m^*}^*$, and recursively
derive
\begin{align*}
y_{m^*+1,m^*+1}
=&
\nu y_{m^*,m^*} \\
y_{m^*-j,m^* + 1}^*
=&
%\left(\frac{1}{1 + \delta}\right)^{j+1} y_{m^*+1,m^*+1}^* +
%\frac{\nu}{1 + \delta} \frac{1}{1 + \nu + \delta} \times \\
%& 
%\sum_{k = 0}^j
%\left(\frac{1}{1 + \delta}\right)^{j - k}
%\left(\frac{1}{1 + \nu + \delta}\right)^k y_{m^*,m^*}^* \\
%=&
a^{j+1} y_{m^*+1,m^*+1}^* + \nu a b
\sum_{k = 0}^j a^{j - k} b^k y_{m^*,m^*}^* \\
=&
a^{j+1} y_{m^*+1,m^*+1}^* + [a^{j+1} - b^{j+1}] y_{m^*,m^*}^* 
\end{align*}
for $j = - 1, \dots, m^* - 2$, 
\begin{align*}
y_{m^*+1-j,m^* + 1}^*
=&
%\left(\frac{1}{1 + \delta}\right)^j y_{m^*+1,m^*+1}^* +
%\frac{\nu}{1 + \delta} \frac{1}{1 + \nu + \delta} \times \\
%&
%\sum_{k = 0}^{j-1}
%\left(\frac{1}{1 + \delta}\right)^{j - 1 - k}
%\left(\frac{1}{1 + \nu + \delta}\right)^k y_{m^*,m^*}^* \\
%=&
a^j y_{m^*+1,m^*+1}^* + \nu a b
\sum_{k = 0}^{j - 1} a^{j - k} b^k y_{m^*,m^*}^* \\
=&
a^j y_{m^*+1,m^*+1}^* + [a^j - b^j] y_{m^*,m^*}^* 
\end{align*}
for $j = 0, \dots, m^* - 1$, 
\begin{align*}
y_{i,m^* + 1}^*
=&
%\left(\frac{1}{1 + \delta}\right)^{m^* + 1 - i} y_{m^*+1,m^*+1}^* +
%\frac{\nu}{1 + \delta} \frac{1}{1 + \nu + \delta} \times \\
% &
%\sum_{k = 0}^{m^* - i}
%\left(\frac{1}{1 + \delta}\right)^{m^* - i - k}
%\left(\frac{1}{1 + \nu + \delta}\right)^k y_{m^*,m^*}^* \\
%=&
a^{m^* + 1 - i} y_{m^*+1,m^*+1}^* + \nu a b
\sum_{k = 0}^{m^* - i} a^{m^* - i - k} b^k y_{m^*,m^*}^* \\
=&
a^{m^* + 1 - i} y_{m^*+1,m^*+1}^* + 
[a^{m^* - i + 1} - b^{m^* - i + 1}] y_{m^*,m^*}^* 
\end{align*}
for $i = 2, \dots, m^* + 1$, 
\begin{align*}
y_{1,m^*+1}^*
=&
%\left(\frac{1}{1 + \delta}\right)^{m^*} y_{m^*+1,m^*+1}^* +
%\frac{\nu}{1 + \delta} \frac{1}{1 + \nu + \delta} \times \\
% &
%\left[\sum_{k = 0}^{m^* - 2} \left(\frac{1}{1 + \delta}\right)^{m^* - 1 - k}
%\left(\frac{1}{1 + \nu + \delta}\right)^k +
%\frac{1}{\nu + \delta} \left(\frac{1}{1 + \nu + \delta}\right)^{m^* - 2}\right]y_{m^*,m^*}^* \\
%=&
a^{m^*} y_{m^*+1,m^*+1}^* + \nu a b
\left[\sum_{k = 0}^{m^* - 2} a^{m^* - 1 - k} b^k +
\frac{1}{\nu + \delta} b^{m^* - 2}\right] y_{m^*,m^*}^* \\
=&
a^{m^*} y_{m^*+1,m^*+1}^* +
a \left[a^{m^* - 1}  - \frac{\delta}{\nu + \delta} b^{m^* - 1}\right] 
y_{m^*,m^*}^*
\end{align*}
and $y_{0,m^*+1}^* = \frac{1}{\delta} y_{1,m^*+1}^*$.

It only remains to be shown that the equation \eqref{nueq} has
a unique solution $\nu \geq 0$, which then further implies that
\[
\nu = \frac{\lambda - \Delta}{w_{m^*}^*},
\]
as noted earlier.

In order to establish that a solution $\nu \geq 0$ exists, note that
$y_{0,m^*+1}^* \downarrow 0$, and
\[
y_{0,m^*} \to \left(\frac{1}{1 + \delta}\right)^{m^*} \leq 1 - \lambda,
\]
as $\nu \downarrow 0$, while $y_{0,m^*} \downarrow 0$ and
\[
y_{0,m^*+1} \to \left(\frac{1}{1 + \delta}\right)^{m^*+1} > 1 - \lambda
\]
as $\nu \to \infty$.

It may further be shown that $y_{0,m^*} + y_{0,m^*+1}$ is in fact
(strictly) decreasing in~$\nu$, ensuring that the value of~$\nu$ is
also unique.

\section{Simulation results for $\sujsqe$}
\label{app:simressync}

The next four figures provide the fluid-limit trajectories
and associated simulation paths for a system with $N = 1000$ servers
and $\lambda = 0.7$ for $\sujsqe$ as referred to in Section~\ref{simusync}.

\begin{figure}[h]
    \centering
    \begin{minipage}{0.460\textwidth}
        \centering
        \includegraphics[width=\textwidth]{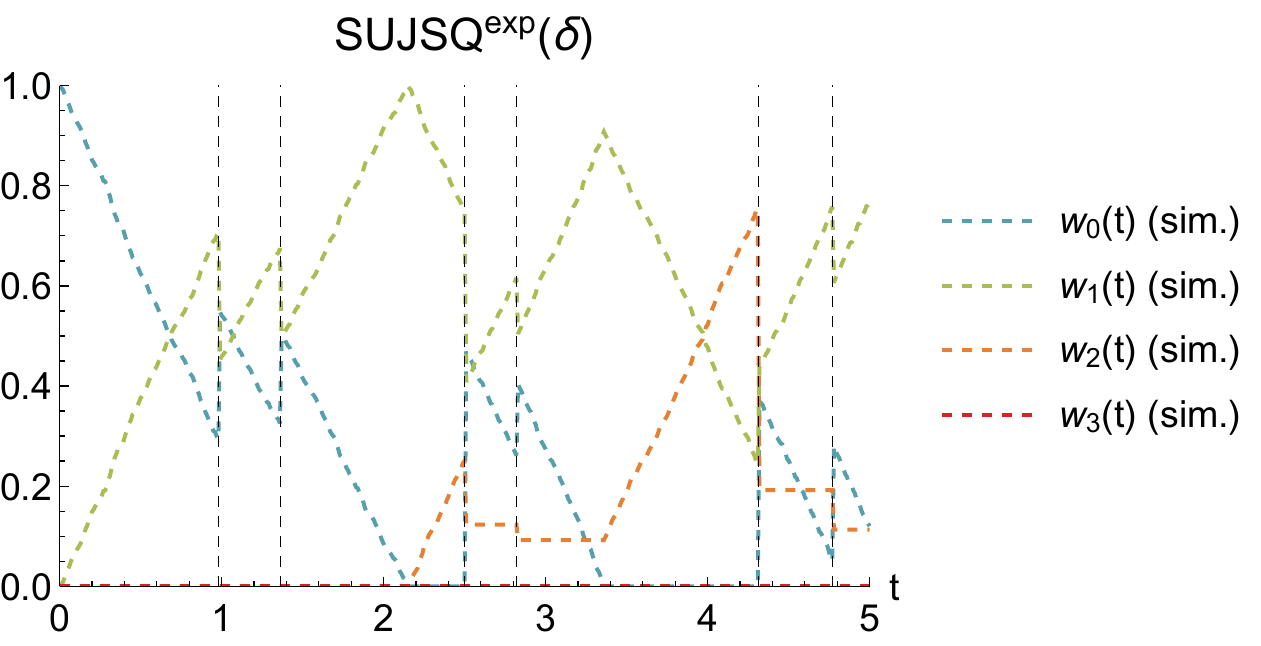} % first figure itself
        \caption{Simulation results for SUJSQ$^{exp}(0.85)$.}
        	\label{fig:ss21}
    \end{minipage}\hfill
    \begin{minipage}{0.460\textwidth}
        \centering
        \includegraphics[width=\textwidth]{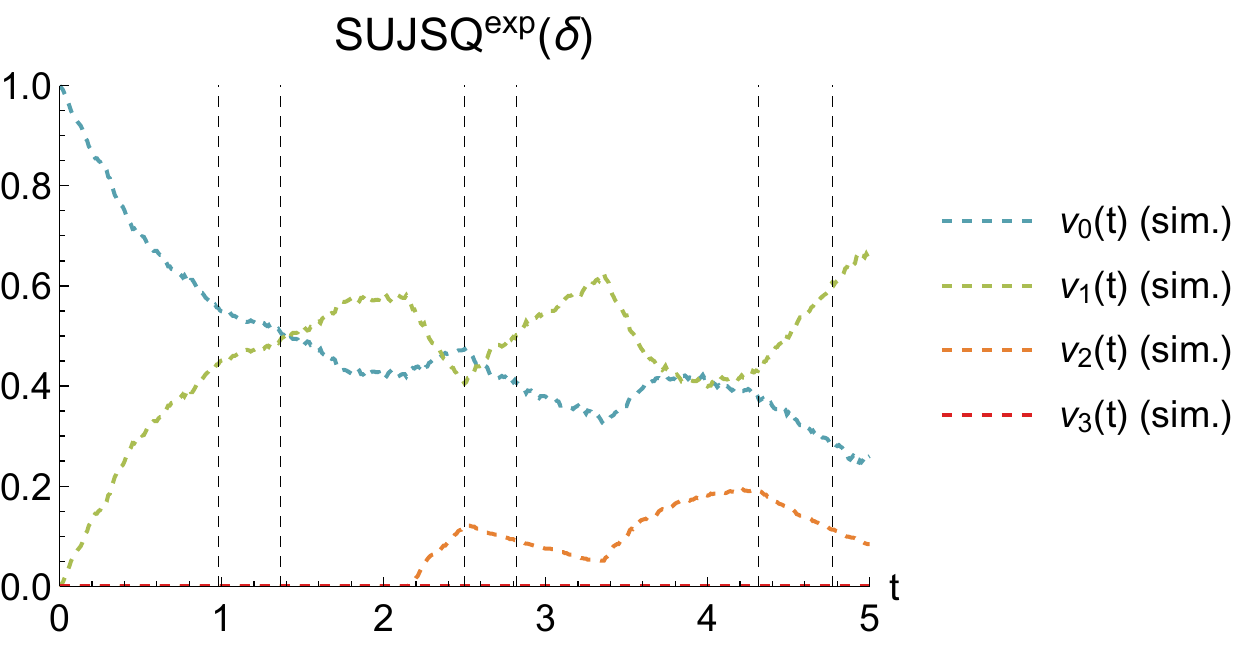} % second figure itself
        \caption{Simulation results for SUJSQ$^{exp}(0.85)$.}
        	\label{fig:ss22}
    \end{minipage}
\end{figure}

\begin{figure}[h]
    \centering
    \begin{minipage}{0.460\textwidth}
        \centering
        \includegraphics[width=\textwidth]{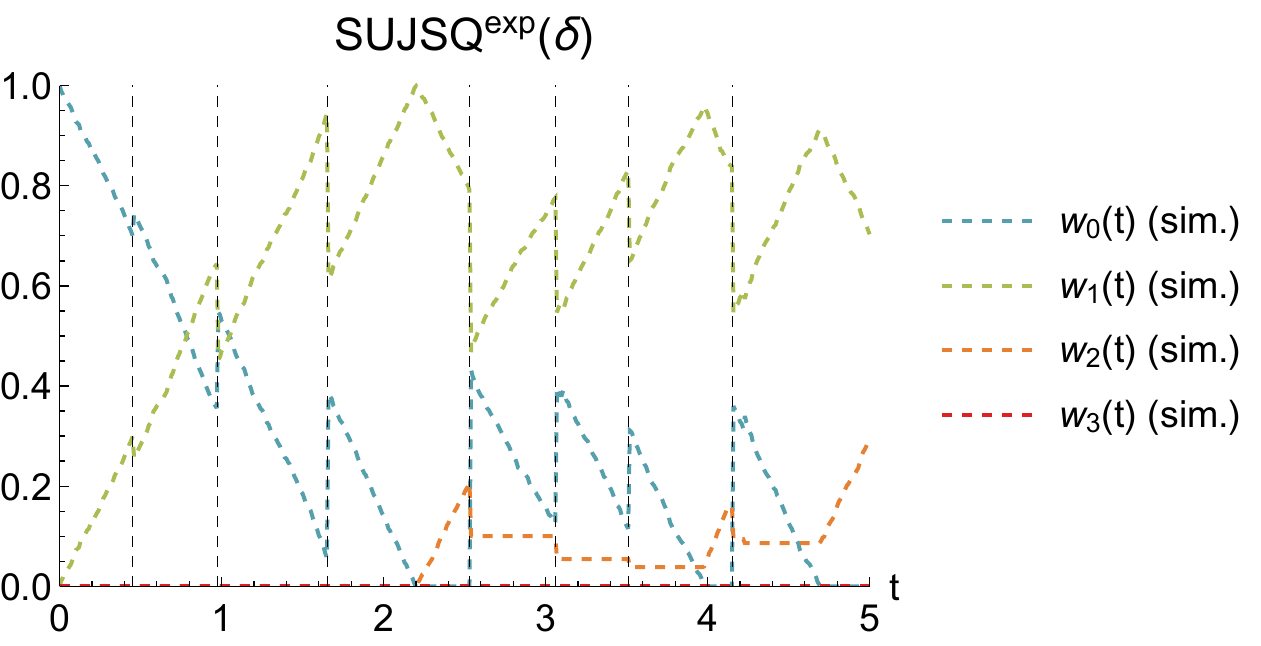} % first figure itself
	\caption{Simulation results for SUJSQ$^{exp}(2.5)$.}
	\label{fig:ss23}
    \end{minipage}\hfill
    \begin{minipage}{0.460\textwidth}
        \centering
        \includegraphics[width=\textwidth]{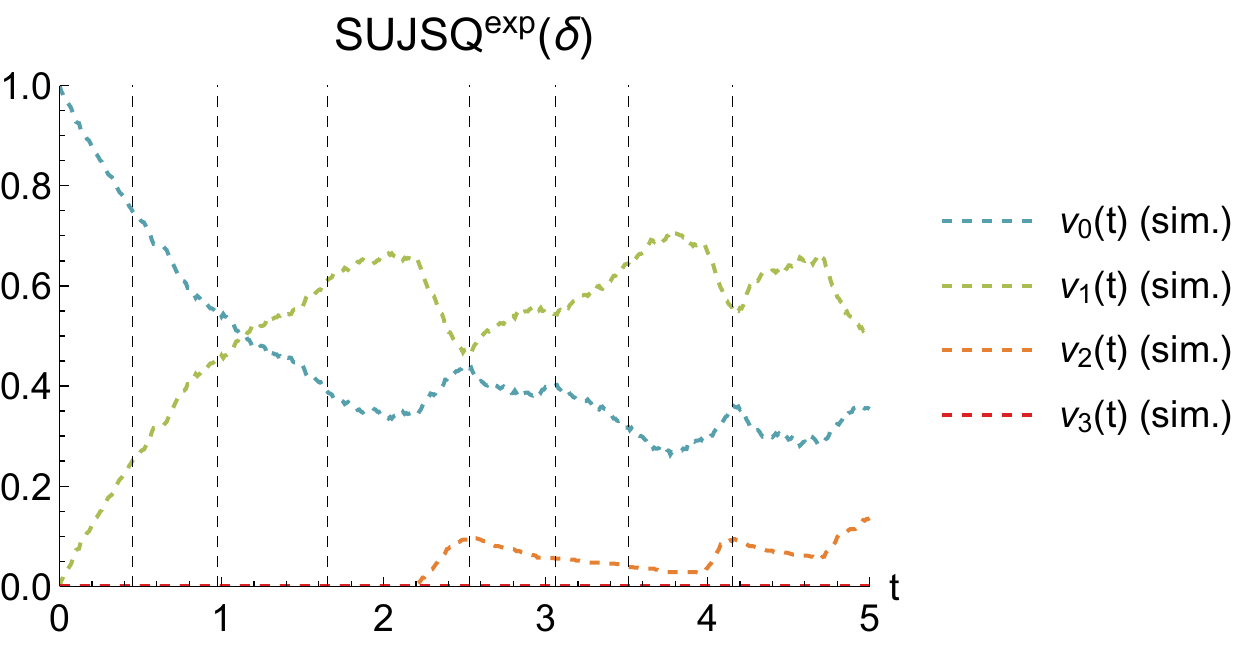} % second figure itself
        \caption{Simulation results for SUJSQ$^{exp}(2.5)$.}
        	\label{fig:ss24}
    \end{minipage}
\end{figure}

\section{Simulation results for $\aujsqd$}
\label{app:simresasyn}

The next four figures provide the simulation plots for a system
with $N = 400$ servers and $\lambda = 0.7$, averaged over 100 runs for $\aujsqd$ as referred
to in Section~\ref{simuasyn}.

\begin{figure}[h]
    \centering
    \begin{minipage}{0.460\textwidth}
        \centering
        \includegraphics[width=\textwidth]{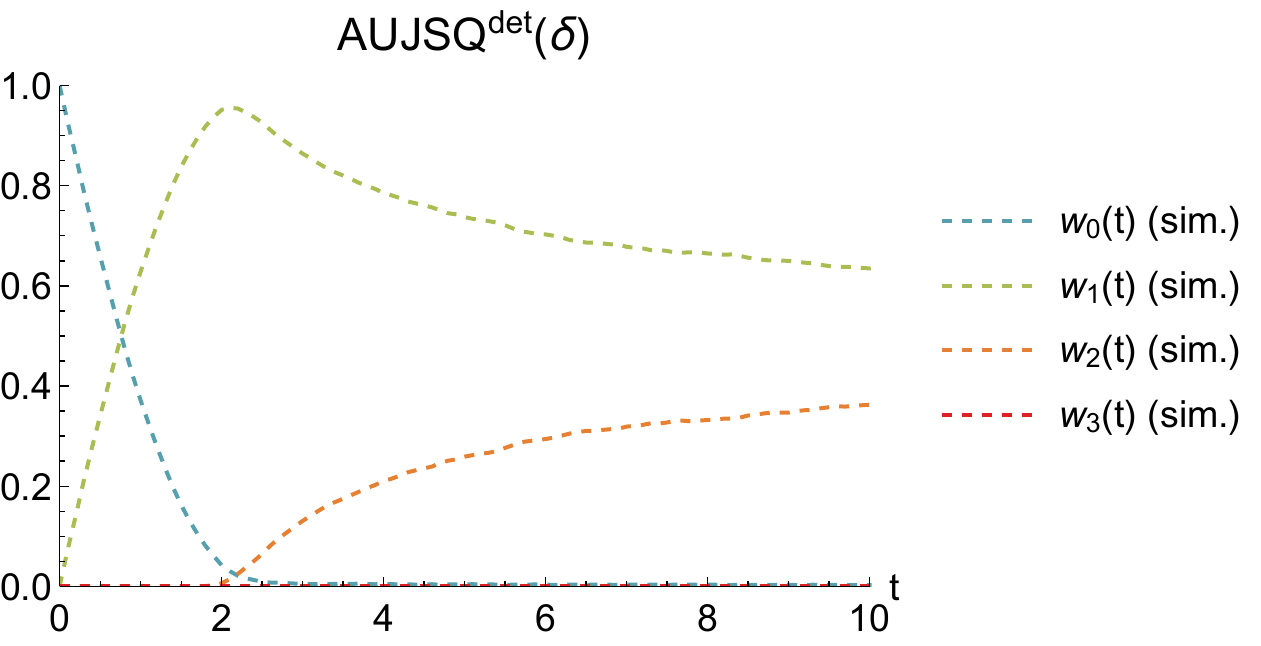} % first figure itself
        \caption{Simulation results for AUJSQ$^{det}(0.85)$.}\label{fig:ss41}
    \end{minipage}\hfill
    \begin{minipage}{0.460\textwidth}
        \centering
        \includegraphics[width=\textwidth]{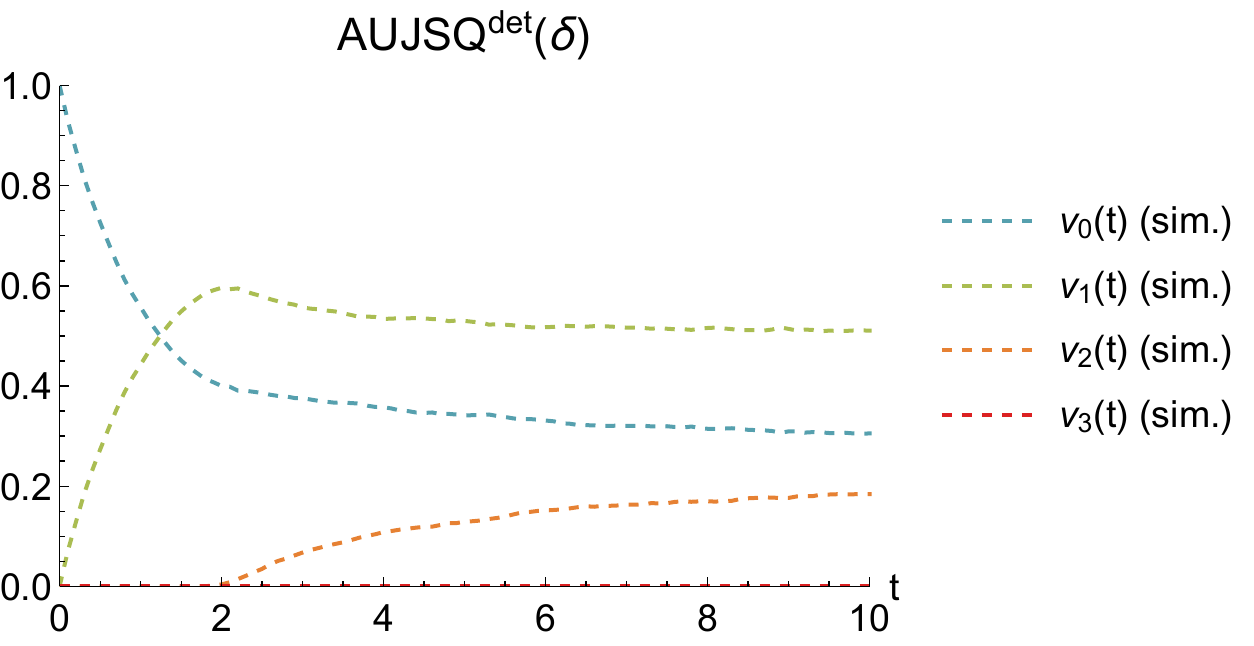} % second figure itself
        \caption{Simulation results for AUJSQ$^{det}(0.85)$.}\label{fig:ss42}
    \end{minipage}
\end{figure}

\begin{figure}[h]
    \centering
    \begin{minipage}{0.460\textwidth}
        \centering
        \includegraphics[width=\textwidth]{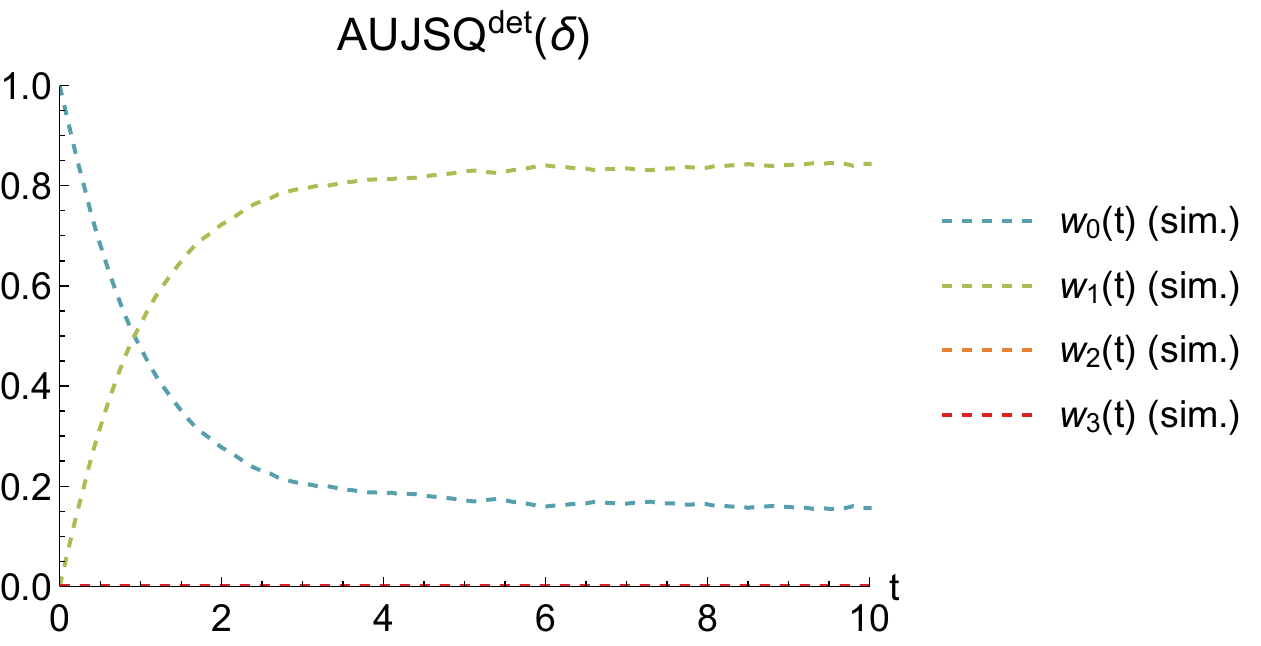} % first figure itself
        \caption{Simulation results for AUJSQ$^{det}(2.5)$.}\label{fig:ss43}
    \end{minipage}\hfill
    \begin{minipage}{0.460\textwidth}
        \centering
        \includegraphics[width=\textwidth]{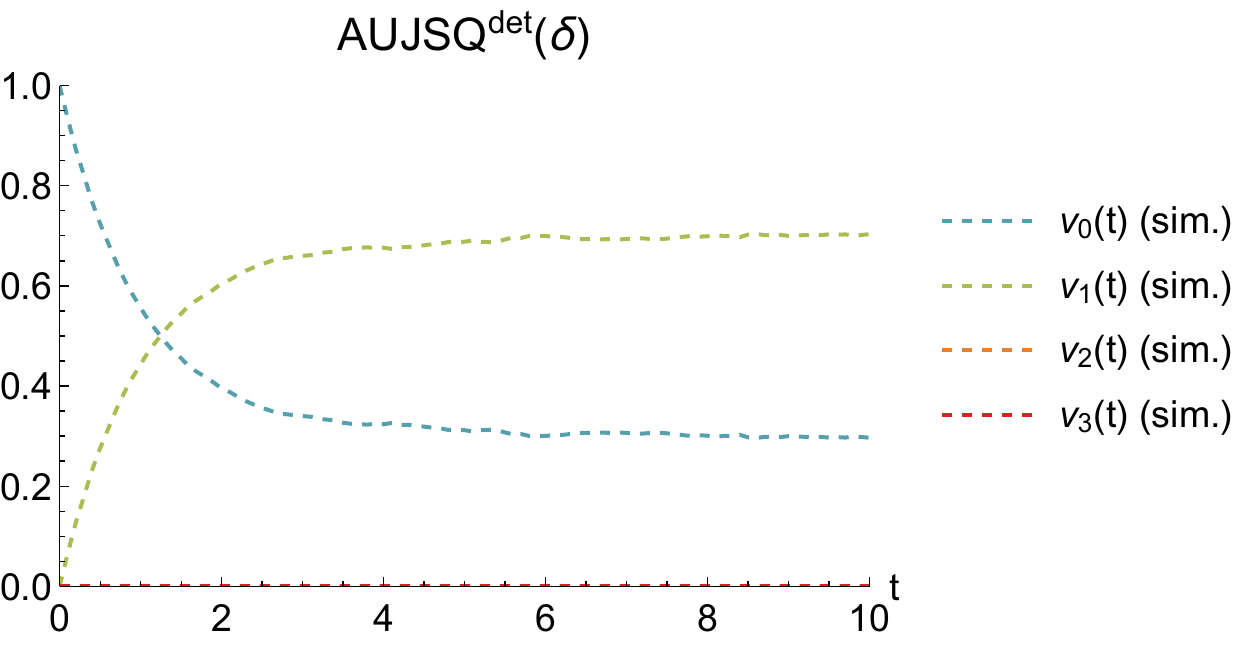} % second figure itself
        \caption{Simulation results for AUJSQ$^{det}(2.5)$.}\label{fig:ss44}
    \end{minipage}
\end{figure}

\bibliographystyle{ACM-Reference-Format}
\bibliography{biblioINI}

\end{document}